\theoremstyle{plain}
\newtheorem{theorem}{Theorem}[section]
\newtheorem{lemma}[theorem]{Lemma}
\newtheorem{proposition}[theorem]{Proposition}
\newtheorem{corollary}[theorem]{Corollary}
\newtheorem{fact}[theorem]{Fact}
\theoremstyle{definition}
\newtheorem{definition}[theorem]{Definition}
\newtheorem{claim}{Claim}
\theoremstyle{remark}
\newtheorem{remark}[theorem]{Remark}
\newtheorem{question}[theorem]{Question}
\newtheorem{notation}[theorem]{Notation}
\DeclareMathOperator{\fix}{{\rm fix}}
\DeclareMathOperator{\Sym}{{\rm Sym}}
\DeclareMathOperator{\Orb}{{\rm Orb}}
\DeclareMathOperator{\dom}{{\rm dom}}
\DeclareMathOperator{\ran}{{\rm ran}}
\DeclareMathOperator{\TC}{{\rm TC}}
\DeclareMathOperator{\sym}{{\rm sym}}
\DeclareMathOperator{\cl}{{\rm cl}}
\DeclareMathOperator{\Aut}{{\rm Aut}}
\begin{document}
\title{Countable products and countable sums of compact metrizable spaces in the absence of the Axiom of Choice}
\author{Kyriakos Keremedis, Eleftherios Tachtsis and Eliza Wajch\\
Department of Mathematics, University of the Aegean\\
Karlovassi 83200, Samos, Greece\\
kker@aegean.gr\\
Department of Statistics and Actuarial-Financial Mathematics,\\
 University of the Aegean, Karlovassi 83200, Samos, Greece\\
 ltah@aegean.gr\\
Institute of Mathematics\\
Faculty of Exact and Natural Sciences \\
Siedlce University of Natural Sciences and Humanities\\
ul. 3 Maja 54, 08-110 Siedlce, Poland\\
eliza.wajch@wp.pl}
\maketitle
\begin{abstract}
 \medskip
The main aim of the article is to show, in the absence of the Axiom of Choice, relationships between the following, independent of $\mathbf{ZF}$, statements: ``Every countable product of compact metrizable spaces is separable (respectively, compact)'' and ``Every countable product of compact metrizable spaces is metrizable''. Statements related to the above-mentioned ones are also studied. Permutation models (among them new ones) are shown in which a countable sum (also a countable product) of metrizable spaces need not be metrizable, countable unions of countable sets are countable and there is a countable family of non-empty sets of size at most $2^{\aleph_0}$ which does not have a choice function.  A new permutation model is constructed in which every uncountable compact metrizable space is of size at least $2^{\aleph_0}$ but a denumerable family of denumerable sets need not have a multiple choice function.

\noindent\textit{Mathematics Subject Classification (2010)}: 03E25, 03E35, 54A35, 54E35, 54D30, 54B10 \newline 
\textit{Keywords}: Weak forms of the Axiom of Choice, metrizable space, compact space,  countable product, countable direct sum, Cantor space, Fraenkel-Mostowski models,  $\mathbf{ZF}$-models
\end{abstract}

\section{Introduction}
\label{s1}
Before we pass to the main content of the article, let us establish the set-theoretic framework,  notation and basic definitions in Sections \ref{s1.1}--\ref{s1.3}. A brief description of the content of the article is given in Section \ref{s1.4}. All new results of the paper are included in Sections \ref{s2} and \ref{s3}.

\subsection{The set-theoretic framework}
\label{s1.1}

In this paper, the intended context for reasoning and statements of theorems
is the Zermelo-Fraenkel set theory $\mathbf{ZF}$ without the axiom of choice 
$\mathbf{AC}$. The system $\mathbf{ZF+AC}$ is denoted by $\mathbf{ZFC}$. We recommend  \cite{ku1} and \cite{Ku} as a
good introduction to $\mathbf{ZF}$. To stress the fact that a result is proved in $\mathbf{ZF}$ or $\mathbf{ZF+A}$ (where $\mathbf{A}$ is a statement independent of $\mathbf{ZF}$), we shall write
at the beginning of the statements of the theorems and propositions ($%
\mathbf{ZF}$) or ($\mathbf{ZF+A}$), respectively. Apart from models of $\mathbf{ZF}$, we refer to some models
of $\mathbf{ZFA}$ (or $\text{ZF}^0$ in \cite{hr}), that is, we refer also to $\mathbf{ZF}$ with an infinite set of atoms (see \cite{j}, \cite{j1} and \cite{hr}). 

As in \cite{ktw1}, let us recall several facts concerning well-ordered cardinals, permutation models and transferable statements.

We recall that a set $X$ is called \emph{Dedekind-finite} if $X$ there does not exist its proper subset equipotent to $X$. A set that is not Dedekind-finite is called \emph{Dedekind-infinite}.  A \emph{finite ordinal} can be defined as an ordinal number (of von Neumann) which is Dedekind-finite.  A \emph{well-ordered cardinal number} is an initial ordinal number, i.e., an ordinal which is not equipotent to any of its elements. Every well-orderable set is equipotent to a unique well-ordered cardinal number, called the cardinality of the well-orderable set. 
 
As usual, the set of all finite ordinals is denoted by $\omega$. If $n\in\omega$, then $n+1=n\cup\{n\}$. For convenience, we put $\mathbb{N}=\omega\setminus\{0\}$ and call every member of $\mathbb{N}$ a \emph{natural number}. The power set of a set $X$ is denoted by $\mathcal{P}(X)$. A set $X$ is called \emph{countable} if $X$ is equipotent to a subset of $\omega$. A set $X$ is called \emph{uncountable} if $X$ is not countable.  A set $X$ is \emph{finite} if $X$ is equipotent to an element of $\omega$. An \emph{infinite} set is a set which is not finite. An infinite countable set is called \emph{denumerable}. It is customary to denote by $\aleph_0$ the cardinality of every denumerable set. 

A set expressible as a countable union of finite sets is called a \emph{cuf set}.  If $X$ is a set and $\kappa$ is a non-zero well-ordered cardinal number, then $[X]^{\kappa}$ is the family of all subsets of $X$ equipotent to $\kappa$, $[X]^{\leq\kappa}$ is the collection of all subsets of $X$ equipotent to subsets of $\kappa$, and $[X]^{<\kappa}$ is the family of all subsets of $X$ equipotent to a (well-ordered) cardinal number in $\kappa$. 

For sets $X$ and $Y$, 
\begin{itemize}
\item $|X|\leq |Y|$ means that $X$ is equipotent to a subset of $Y$;
\item $|X|=|Y|$ means that $X$ is equipotent to $Y$; 
\item $|X|<|Y|$ means that $|X|\leq |Y|$ and $|X|\neq|Y|$.
\end{itemize} 

For a set $X$, $|X|\leq\aleph_0$ if and only if $X$ is countable, and $|X|\nleq\aleph_0$ if and only if $X$ is uncountable. Furthermore, although we do not use any notion of a cardinal of a not well-orderable set, we can still use, for every set $X$, the following equivalences:
$|X|=|\mathbb{R}|\leftrightarrow |X|=2^{\aleph_0}$, $|X|\leq |\mathbb{R}|\leftrightarrow |X|\leq 2^{\aleph_0}$ and $|X|<|\mathbb{R}|\leftrightarrow |X|<2^{\aleph_0}$.

Since, in Sections \ref{s2} and \ref{s3}, we apply known permutation models of $\mathbf{ZFA}$ and construct a new one (in Section \ref{s3.3}), let us establish our notation concerning constructions and descriptions of such models. We refer to \cite[Chapter 4]{j} and \cite[Chapter 15, p. 251]{j1} for the basic terminology and facts about permutation models.

Suppose we are given a model $\mathcal{M}$ of $\mathbf{ZFA+AC}$ with an infinite set $A$ of all atoms of $\mathcal{M}$, and a group $\mathcal{G}$ of permutations of $A$. For a set $x\in\mathcal{M}$, we denote by $\TC(x)$ the transitive closure of $x$ in $\mathcal{M}$. Every permutation $\phi$ of $A$ extends uniquely to an $\in$-automorphism (usually denoted also by $\phi$) of $\mathcal{M}$. For $x\in \mathcal{M}$, we put:
$$\fix_{\mathcal{G}}(x)=\{\phi\in\mathcal{G}: (\forall t\in x)\phi(t)=t\}\text{ and } \sym_{\mathcal{G}}(x)=\{\phi\in\mathcal{G}: \phi(x)=x\}.$$
Definitions of the concepts of a \emph{normal filter} and a \emph{normal ideal} used below can be found in \cite[Chapter 4, pp. 46--47]{j}. Every (normal) filter of subgroups of the group $\mathcal{G}$ can be called shortly a \emph{(normal) filter on $\mathcal{G}$}. Let us recall the following definition formulated in \cite{ktw1}:

\begin{definition}
\label{s2d9} (Cf. \cite[Definition 2.9]{ktw1}.)
\begin{enumerate}
\item[(i)] The \emph{permutation model} $\mathcal{N}$  \emph{determined by} $\mathcal{M}, \mathcal{G}$ \emph{and a normal filter}  $\mathcal{F}$ of subgroups of $\mathcal{G}$ is defined by the equality:
$$\mathcal{N}=\{x\in\mathcal{M}: (\forall t\in\TC(\{x\}))(\sym_{\mathcal{G}}(t)\in\mathcal{F})\}.$$

\item[(ii)] The \emph{permutation model} $\mathcal{N}$ \emph{determined by} $\mathcal{M}, \mathcal{G}$ \emph{and a normal ideal} $\mathcal{I}$ of subsets of the set of all atoms of $\mathcal{M}$ is defined by the equality:
$$\mathcal{N}=\{x\in\mathcal{M}: (\forall t\in\TC(\{x\}))(\exists E\in\mathcal{I}) (\fix_{\mathcal{G}}(E)\subseteq\sym_{\mathcal{G}}(t))\}.$$
\item[(iii)] (Cf. \cite[p. 46]{j} and \cite[p. 251]{j1}.)  A \emph{permutation model} (or, equivalently, a \emph{Fraenkel--Mostowski model}) is a class $\mathcal{N}$ which can be defined by (i).
\end{enumerate}
\end{definition}

Given a normal ideal $\mathcal{I}$ of subsets of the set $A$ of atoms of $\mathcal{M}$, the filter $\mathcal{F}_{\mathcal{I}}$ on $\mathcal{G}$, generated by $\{\fix_{\mathcal{G}}(E): E\in\mathcal{I}\}$, is a normal filter on $\mathcal{G}$ such that the permutation model determined by $\mathcal{M}, \mathcal{G}$ and $\mathcal{F}_{\mathcal{I}}$ coincides with the permutation model determined by $\mathcal{M}, \mathcal{G}$ and $\mathcal{I}$ (see \cite[p. 47]{j}). For $x\in\mathcal{M}$, a set $E\in\mathcal{I}$ such that $\fix_{\mathcal{G}}(E)\subseteq \sym_{\mathcal{G}}(x)$ is called a \emph{support} of $x$. 

\begin{fact}
\label{f:2} If $\mathcal{V}$ is a Fraenkel--Mostowski model determined by $\mathcal{M}$, a group $\mathcal{G}$ of permutations of the set $A$ of atoms of $\mathcal{M}$ and a normal filter $\mathcal{F}$ on $\mathcal{G}$, then an element $x$ of $\mathcal{V}$ is well-orderable in $\mathcal{V}$ if and only if $\fix_{\mathcal{G}}(x)\in\mathcal{F}$ (see \cite[Equation (4.2), p.47]{j}).
\end{fact}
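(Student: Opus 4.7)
The plan is to prove both directions of the biconditional using only the definition of $\mathcal{V}$ and the fact that $\mathcal{F}$ is a filter, hence upward closed under inclusion and closed under finite intersections.

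For the ``only if'' direction, I would suppose that $x\in\mathcal{V}$ admits a well-ordering $R\in\mathcal{V}$. Then both $\sym_{\mathcal{G}}(x)$ and $\sym_{\mathcal{G}}(R)$ lie in $\mathcal{F}$. The key observation is that any $\phi\in\sym_{\mathcal{G}}(x)\cap\sym_{\mathcal{G}}(R)$ restricts to an order-automorphism of the well-ordered set $(x,R)$; since the only order-automorphism of a well-ordered set is the identity, $\phi(y)=y$ for every $y\in x$, i.e.\ $\phi\in\fix_{\mathcal{G}}(x)$. Thus $\sym_{\mathcal{G}}(x)\cap\sym_{\mathcal{G}}(R)\subseteq\fix_{\mathcal{G}}(x)$, and upward-closure of $\mathcal{F}$ yields $\fix_{\mathcal{G}}(x)\in\mathcal{F}$.

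For the converse, I would assume $\fix_{\mathcal{G}}(x)\in\mathcal{F}$. Since $\mathcal{M}\models\mathbf{ZFA+AC}$, I may pick in $\mathcal{M}$ a well-ordering $R$ of $x$; what remains is to verify that $R\in\mathcal{V}$, i.e.\ that $\sym_{\mathcal{G}}(t)\in\mathcal{F}$ for every $t\in\TC(\{R\})$. Every such $t$ is either $R$ itself, an ordered pair $(y,z)$ with $y,z\in x$, one of the Kuratowski auxiliaries $\{y\}$ or $\{y,z\}$, an element $y\in x$, or a member of $\TC(\{y\})$ for some $y\in x$. For the last two types, $\sym_{\mathcal{G}}(t)\in\mathcal{F}$ already because $x\in\mathcal{V}$; for the remaining types, any $\phi\in\fix_{\mathcal{G}}(x)$ fixes every element of $x$ by definition, hence fixes $\{y\}$, $\{y,z\}$, $(y,z)$ and $R$ setwise. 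Consequently $\fix_{\mathcal{G}}(x)\subseteq\sym_{\mathcal{G}}(t)$ in those cases, and upward-closure of $\mathcal{F}$ finishes the argument. Thus $R\in\mathcal{V}$ witnesses that $x$ is well-orderable in $\mathcal{V}$.

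The only genuinely delicate point I expect is the ``rigidity'' step in the forward direction, where one uses that no nontrivial order-automorphism of a well-ordered set exists; this is where the hypothesis of well-orderability (rather than mere linear orderability) is essential. Everything else is routine bookkeeping with transitive closures and the filter axioms.
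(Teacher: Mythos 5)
Your argument is correct and is essentially the standard proof of this fact (the paper itself gives no proof, citing Jech's Equation (4.2); your two directions — rigidity of well-orderings giving $\sym_{\mathcal{G}}(x)\cap\sym_{\mathcal{G}}(R)\subseteq\fix_{\mathcal{G}}(x)$, and the support computation showing a well-ordering chosen in $\mathcal{M}$ lands in $\mathcal{V}$ — are exactly the cited argument). The only point left tacit, which is routine, is that a relation that well-orders $x$ in $\mathcal{M}$ still well-orders it in the transitive subclass $\mathcal{V}$.
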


For the definitions of the terms ``\emph{boundable statement}'' and ``\emph{injectively boundable statement}'' that will be used in the sequel, the reader is referred to \cite{dpi} or \cite[Note 103]{hr}.
\begin{fact}
\label{f:1} (Cf. \cite[p. 722]{dpi} or \cite[Note 103, p. 285]{hr}.) Boundable statements are (up to equivalence)
injectively boundable.
\end{fact}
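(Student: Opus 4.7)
The plan is to show that any boundable statement $\Phi$ has an equivalent reformulation of the injectively boundable form. Recall the shape of the definitions: $\Phi$ is boundable if there is an ordinal $\alpha$ and a formula $\Phi^{\ast}(x)$ whose quantifiers are bounded (i.e., restricted to transitive closures of its free variables or to fixed $V_\beta$'s) such that $\Phi$ is equivalent in $\mathbf{ZF}$ to $(\exists x\in V_\alpha)\Phi^{\ast}(x)$; and $\Phi$ is injectively boundable if it is equivalent to $(\exists x)(\exists f)(f\colon x\to V_\alpha \text{ is an injection }\wedge \Psi(x,f))$ for some bounded $\Psi$ and some ordinal $\alpha$. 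Since boundable statements are (trivially) the $(\alpha=\alpha)$-case where $x$ lies inside $V_\alpha$, the task is really to replace ``$x\in V_\alpha$'' by ``$x$ injects into $V_\alpha$'', absorbing the membership condition into the bounded matrix.

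The key idea I would use is the Mostowski coding. Given $x\in V_\alpha$, set $T=\TC(\{x\})$ and let $E=(\in)\!\upharpoonright\! T\times T$. Then $(T,E)$ is a well-founded extensional relation with designated top element $x$, the Mostowski collapse of $(T,E)$ is the identity (because $T$ is transitive), and the inclusion $T\hookrightarrow V_\alpha$ witnesses $|T|\leq|V_\alpha|$ via an injection. Conversely, any triple $(S,R,s_0)$ consisting of a well-founded extensional binary relation $R$ on a set $S$, together with a distinguished element $s_0\in S$ whose $R$-rank is at most $\alpha$, uniquely determines, via the Mostowski collapse $\pi$, an element $\pi(s_0)\in V_\alpha$. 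Using this correspondence, I would rewrite $(\exists x\in V_\alpha)\Phi^{\ast}(x)$ as the existence of such a triple $(S,R,s_0)$ together with an injection $f\colon S\to V_\alpha$ and a collapse map $\pi$ witnessing $\Phi^{\ast}(\pi(s_0))$. Since $S$, $R$, $s_0$, $f$ and $\pi$ can all be taken to sit inside $V_{\alpha+k}$ for some small fixed $k$, the matrix remains a bounded formula (after raising $\alpha$ to $\alpha+k$), while the outer quantifier is now of the form ``$\exists S$ with an injection into $V_{\alpha+k}$'', which is precisely the injectively boundable shape.

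The main obstacle I anticipate is ensuring that the translation does not sneak in unbounded quantifiers. The Mostowski collapse is defined by transfinite recursion, and one must check that this recursion can be expressed with quantifiers bounded by $V_{\alpha+k}$: the point is that whenever $S\subseteq V_\alpha$ (or merely injects into $V_\alpha$), the $\in$-recursion on $S$ terminates inside $V_{\alpha+k}$, so the collapse $\pi$, the auxiliary rank function, and the verification that $(S,R)$ is well-founded and extensional are all expressible through quantifiers over $V_{\alpha+k}$. Once this bookkeeping is carried out, the equivalence between the original boundable $\Phi$ and its injectively boundable reformulation is immediate, and the ``up to equivalence'' in the statement absorbs the cosmetic shift from $\alpha$ to $\alpha+k$. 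This is in fact the route taken in \cite{dpi} and summarized in \cite[Note 103]{hr}.
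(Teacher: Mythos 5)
The paper does not prove this Fact at all: it is quoted verbatim from Pincus \cite[p.~722]{dpi} and \cite[Note 103]{hr}, so there is no internal argument to compare yours with. The genuine problem with your attempt is that both definitions you start from are not the ones those sources use, so the statement you end up arguing for is not the statement of the Fact. In Pincus's terminology a sentence is \emph{boundable} if it is equivalent to one of the form $(\forall x)\Phi(x)$ where every quantifier of $\Phi$ is restricted to $P^{\alpha}(\TC(x))$ for a fixed ordinal $\alpha$ --- the bound is a power-set tower erected over the transitive closure of the quantified set $x$, not a fixed level $V_{\alpha}$ of the cumulative hierarchy, and the outer quantifier is universal, not existential. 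Likewise, \emph{injectively boundable} does not mean ``the witness injects into $V_{\alpha}$''; it means the sentence is equivalent to a conjunction of sentences of the form $(\forall x)\bigl(|x|_{-}\leq\aleph_{\xi}\rightarrow\Phi(x)\bigr)$, where $\Phi$ is bounded in $x$ as above and $|x|_{-}$, the injective cardinality of $x$, is the supremum of the well-ordered cardinals that inject \emph{into} $x$. Your Mostowski-collapse coding is machinery aimed at a different target: nothing in it produces the hypothesis ``$|x|_{-}\leq\aleph_{\xi}$'' that the definition requires, and the careful bookkeeping about expressing the collapse with quantifiers bounded by $V_{\alpha+k}$ addresses an obstacle that does not arise in the actual problem.

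With the correct definitions the Fact is a one-line observation, which is why the sources state it almost without comment. Given a boundable $(\forall x)\Phi(x)$, note that every set $x$ satisfies $x\in\TC(\{x\})$ and $|\{x\}|_{-}=1\leq\aleph_{0}$, so $(\forall x)\Phi(x)$ is equivalent to
$$(\forall y)\bigl(|y|_{-}\leq\aleph_{0}\rightarrow(\forall x\in\TC(y))\,\Phi(x)\bigr),$$
which has exactly the required shape: the matrix $(\forall x\in\TC(y))\Phi(x)$ is again bounded in $y$ with the same bound $\alpha$, because $x\in\TC(y)$ implies $\TC(x)\subseteq\TC(y)$ and hence $P^{\alpha}(\TC(x))\subseteq P^{\alpha}(\TC(y))$. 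No well-founded coding, no collapse, and no rank analysis is needed. I would recommend rereading the definitions in \cite[Note 103]{hr} before reworking the argument.
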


We recall that a set-theoretic statement $\mathbf{\Phi}$ is \emph{transferable} if there is a metatheorem: \emph{If $\mathbf{\Phi}$ is true in a Fraenkel--Mostowski model of $\mathbf{ZFA}$, then $\mathbf{\Phi}$ is relatively consistent with $\mathbf{ZF}$}. Jech and Sochor showed that boundable statements are transferable (see \cite[Theorems 6.1, 6.8]{j} or \cite{js}), and Pincus showed in \cite{dpi} the stronger result that injectively boundable statements are transferable (see \cite[Metatheorem 3A2]{dpi}, \cite[Note 103, p. 286]{hr} and Theorem \ref{thm:Pinc} below). 

\begin{theorem}
\label{thm:Pinc} (Cf. \cite[Theorem 3A3]{dpi}.) (The Pincus Theorem.) Let $\mathcal{V}_0$ be a Fraenkel-Mostowski model of $\mathbf{ZFA}$. Let $\mathbf{\Phi} $ be a conjunction of injectively
boundable statements each of which is true in $\mathcal{V}_0$. Then there exists a $\mathbf{ZF}$-model $\mathcal{V}$ such that $\mathcal{V}_0\subset \mathcal{V}$, the models $\mathcal{V}$ and $\mathcal{V}_0$ have the same ordinals and their cofinalities, and $\mathbf{\Phi}$ is true in $\mathcal{V}$. Hence, every injectively boundable statement is transferable. 
\end{theorem}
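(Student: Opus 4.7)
The plan is to extend the Jech--Sochor embedding construction, tracking injective bounds and keeping careful account of ordinal cofinalities. Since $\mathcal{V}_0$ is a Fraenkel--Mostowski model, it arises inside some $\mathbf{ZFA+AC}$ ground model $\mathcal{M}$ from a group $\mathcal{G}$ of permutations of the atoms $A$ and a normal filter $\mathcal{F}$ of subgroups of $\mathcal{G}$. I would first pass to the kernel (pure part) $\mathcal{M}^{\mathrm{pure}}$, which is a model of $\mathbf{ZFC}$ with the same ordinals as $\mathcal{V}_0$, and perform a symmetric forcing extension: for each $a \in A$ adjoin a generic subset $\dot a$ of some fixed ordinal $\theta$ via Cohen forcing, let $\mathcal{G}$ act on the conditions through its natural action on the index set $A$, and carve out the symmetric submodel $\mathcal{V}$ using the filter on the ambient automorphism group obtained by transporting $\mathcal{F}$ along $a\mapsto\dot a$.

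The second step is to set up the Jech--Sochor embedding $\pi\colon (\mathcal{V}_0)_\alpha \hookrightarrow \mathcal{V}$ defined recursively by $\pi(a)=\dot a$ and $\pi(x)=\{\pi(y):y\in x\}$, for a rank $\alpha$ chosen large relative to the bound attached to $\mathbf{\Phi}$. Standard symmetry computations show that $\pi$ preserves $\in$ and the satisfaction of bounded formulas. For a merely boundable statement this already closes the argument, because its truth is reflected on a sufficiently high rank-initial segment. For the injectively boundable case, I would use the hypothesis that the witnesses required for $\mathbf{\Phi}$ inject into a well-orderable set $W\in\mathcal{V}_0$; since $W$, having cardinality an ordinal, sits inside $(\mathcal{V}_0)_\alpha$ for $\alpha$ large, the image $\pi[(\mathcal{V}_0)_\alpha]$ in $\mathcal{V}$ still contains enough material to witness $\mathbf{\Phi}$. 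Fact~\ref{f:1} then absorbs the ordinary boundable case into this framework.

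The third ingredient is the preservation of ordinals and cofinalities. Ordinals carry over automatically, since a symmetric submodel of a $\mathbf{ZFC}$ extension has the same ordinals as its ground model, and $\mathcal{M}^{\mathrm{pure}}$ has the same ordinals as $\mathcal{V}_0$. For cofinalities, I would exploit homogeneity of the Cohen forcing combined with the symmetry to show that any cofinal function from an ordinal $\lambda$ into an ordinal $\kappa$ lying in $\mathcal{V}$ is, up to the filter, coded by a function in $\mathcal{M}^{\mathrm{pure}}$, so cofinalities cannot drop. The main obstacle, I expect, is the simultaneity of the conjunction: distinct conjuncts of $\mathbf{\Phi}$ may come with different injective bounds, so $\theta$ and $\alpha$ must be chosen uniformly above the supremum of all of them, and one must verify that a \emph{single} symmetric extension validates every conjunct at once rather than relying on incompatible extensions tailored to each. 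Once this uniform choice is handled, the final clause --- that every injectively boundable statement is transferable --- follows by specializing $\mathbf{\Phi}$ to a single conjunct.
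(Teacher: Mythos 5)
The paper offers no proof of this statement: it is quoted verbatim as Pincus's metatheorem, with the citation to \cite[Theorem 3A3]{dpi} doing all the work, so there is no in-paper argument to compare yours against. Judged on its own terms, your sketch correctly reproduces the Jech--Sochor skeleton (symmetric Cohen extension of the kernel, the recursive embedding $\pi(a)=\dot a$, $\pi(x)=\{\pi(y):y\in x\}$, and the transfer of boundable statements via reflection into a high $P^{\alpha}(\TC(\cdot))$), and your remarks about choosing $\theta$ and $\alpha$ uniformly above all the bounds of the conjuncts, and about cofinality preservation via the ccc/homogeneity of the forcing, are in order.

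The genuine gap is in the step that is the whole point of Pincus's improvement over Jech--Sochor. You gloss ``injectively boundable'' as ``the witnesses required for $\mathbf{\Phi}$ inject into a well-orderable set $W\in\mathcal{V}_0$,'' but that is not the definition: an injectively boundable statement is (a conjunction of statements of) the form $\forall x\,(|x|_{-}\leq\aleph_{\eta}\rightarrow\Psi(x))$, where $|x|_{-}$ is the least well-ordered cardinal \emph{not} injectable into $x$ and $\Psi$ is boundable -- for instance $\mathbf{IDI}$ is $\forall x\,(|x|_{-}\leq\aleph_0\rightarrow x\text{ is finite})$. The difficulty is therefore not about witnesses lying in the range of $\pi$; it is about the universally quantified $x$ ranging over \emph{all} of $\mathcal{V}$, including sets that are not in $\pi[(\mathcal{V}_0)_{\alpha}]$ at all. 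Pincus's argument must show that any $x\in\mathcal{V}$ with small injective cardinality can, via support analysis and automorphisms of the symmetric extension, be reduced to a set for which the boundable matrix $\Psi$ is decided inside (an isomorphic copy of) the permutation model; your sketch never addresses such $x$, so the central step of the proof is missing. Your closing appeal to Fact~\ref{f:1} also runs in the wrong direction -- it absorbs boundable statements into the injectively boundable class, which is the easy inclusion, not the transfer. Since the paper treats the theorem as a black box, the honest options are either to cite Pincus as the authors do or to reproduce his support-theoretic argument in full; the sketch as written does neither.
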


\subsection{Notation and basic definitions related to topology}
\label{s1.2}

Let $\langle X, d\rangle$ be a metric space. Then the $d$-\textit{ball with centre $x\in X$ and radius} $r\in(0, +\infty)$ is the set 
$$B_{d}(x, r)=\{ y\in X: d(x, y)<r\}.$$
 The collection 
$$\tau(d)=\{ V\subseteq X: (\forall x\in V)(\exists r\in (0, +\infty)) B_{d}(x, r)\subseteq V\}$$
is the \textit{topology on $X$ induced by $d$}. We say that the metric space $\langle X, d\rangle$ has a topological property $\mathcal{P}$, if the topological space $\langle X, \tau(d)\rangle$ has $\mathcal{P}$.   For a set $A\subseteq X$, let $\delta_d(A)=0$ if $A=\emptyset$, and let $\delta_d(A)=\sup\{d(x,y): x,y\in A\}$ if $A\neq \emptyset$. Then $\delta_d(A)$ is the \emph{diameter} of $A$ in $\langle X, d\rangle$.

\begin{definition}
\label{s1d1}
Let  $\mathbf{X}=\langle X, \tau\rangle$  be a topological space and let  $Y\subseteq X$. Suppose that $\mathcal{B}$ is a base of $\mathbf{X}$.
\begin{enumerate}
\item[(i)] The closure of $Y$ in $\mathbf{X}$ is denoted by $\cl_{\mathbf{X}}(Y)$.
\item[(ii)] $\tau|_Y=\{U\cap Y: U\in\tau\}$. $\mathbf{Y}=\langle Y, \tau|_Y\rangle$ is the subspace of $\mathbf{X}$ with the underlying set $Y$.
\end{enumerate}
\end{definition}

The set of all real numbers is denoted by $\mathbb{R}$ and, if it is not stated otherwise, $\mathbb{R}$ and every subspace of $\mathbb{R}$ are considered with their usual topology (denoted here by $\tau_{nat}$) and with the metric $d_{e}$ induced by the standard absolute value on $\mathbb{R}$.

 In the sequel, boldface letters will denote metric or topological spaces (called spaces in abbreviation) and lightface letters will denote their underlying sets. Metric and topological spaces will be called in brief \emph{spaces} if this is not misleading.

\begin{definition}
\label{s1d2} Let $\mathbf{X}$ be a space. Then:
\begin{enumerate}
 \item[(i)] $\mathbf{X}$ is \emph{first-countable} if every point of $X$ has a countable base of neighbourhoods;
 \item[(ii)] $\mathbf{X}$ is \emph{second-countable} if $\mathbf{X}$ has a countable base.
 \item[(iii)] $\mathbf{X}$ is \emph{compact} if every open cover of $\mathbf{X}$ has a finite subcover.
 \item[(iv)] $\mathbf{X}$ is \emph{separable} if it has a dense countable subset.
 \end{enumerate}
\end{definition}

Given a collection  $\{X_j: j\in J\}$ of sets, for every $i\in J$, we denote by $\pi_i$ the projection $\pi_i:\prod\limits_{j\in J}X_j\to X_i$ defined by $\pi_i(x)=x(i)$ for each $x\in\prod\limits_{j\in J}X_j$. If $\tau_j$ is a topology on $X_j$, then $\mathbf{X}=\prod\limits_{j\in J}\mathbf{X}_j$ denotes the Tychonoff product of the topological spaces $\mathbf{X}_j=\langle X_j, \tau_j\rangle$ with $j\in J$. If $\mathbf{X}_j=\mathbf{X}$ for every $j\in J$, then $\mathbf{X}^{J}=\prod\limits_{j\in J}\mathbf{X}_j$. As in \cite{En}, for an infinite set $J$ and the unit interval $[0,1]$ of $\mathbb{R}$, the cube $[0,1]^J$ is called the \emph{Tychonoff cube}. If $J$ is denumerable, then the Tychonoff cube $[0,1]^J$ is called the \emph{Hilbert cube}. In \cite{hh}, all Tychonoff cubes are called Hilbert cubes. In \cite{w}, Tychonoff cubes are called cubes. The Tychonoff cube $\mathbb{N}^{\omega}$, where $\mathbb{N}$ is the discrete subspace of positive integers of $\mathbb{R}$, is called the \emph{Baire space}.

We denote by $\mathbf{2}$ the discrete space $\langle 2, \mathcal{P}(2)\rangle$ where $2=\{0, 1\}$. Then, for every infinite set $J$, the space $\mathbf{2}^J$ is called a \emph{Cantor cube}. 

We recall that if $\prod\limits_{j\in J}X_j\neq\emptyset$, then it is said that the family $\{X_j: j\in J\}$ has a choice function, and every element of $\prod\limits_{j\in J}X_j$ is called a \emph{choice function} of the family $\{X_j: j\in J\}$. A \emph{multiple choice function} of $\{X_j: j\in J\}$ is every function $f\in\prod\limits_{j\in J}\mathcal{P}(X_j)$ such that, for every $j\in J$, $f(j)$ is a non-empty finite subset of $X_j$. A set $f$ is called a \emph{partial} (\emph{multiple}) \emph{choice function} of $\{X_j: j\in J\}$ if there exists an infinite subset $I$ of $J$ such that $f$ is a (multiple) choice function of $\{X_j: j\in I\}$. Given a non-indexed family $\mathcal{A}$, we treat $\mathcal{A}$ as an indexed family $\mathcal{A}=\{x: x\in\mathcal{A}\}$ to speak about a  (partial) choice function and a (partial) multiple choice function of $\mathcal{A}$.

\begin{definition}
\label{s1d3}
(Cf. \cite{br}, \cite{lo} and \cite{kerta}.) 
 A space $\mathbf{X}$ is said to be \emph{Loeb}  if the family of all non-empty closed subsets of $\mathbf{X}$ has a choice function.
\item[(ii)] If $\mathbf{X}$ is a Loeb space, then every  choice function of the family of all non-empty closed subsets of $\mathbf{X}$ is called a \emph{Loeb function} of $\mathbf{X}$.
\end{definition}

We recall that if $\{\mathbf{X}_{n}=\langle X_{n},d_{n}\rangle:n\in 
\mathbb{N}\}$ is a family of metric spaces, then, for $X=\prod\limits_{n\in \mathbb{N}}X_{n}$, the function $d:X\times
X\rightarrow \mathbb{R}$ given by: 
\begin{equation}
d(x,y)=\sum\limits_{n\in \mathbb{N}}\frac{\min\{d_n(x(n), y(n)), 1\}}{2^{n}}
\label{0}
\end{equation}%
for all $x,y\in X$, is
a metric on $X$ and the topology $\tau(d)$ in $X$ coincides with
the product topology of the family of spaces $\{\langle X_{n}, \tau(d_n)\rangle:n\in \mathbb{N%
}\}$ (see, e.g., \cite{w}). In the sequel, we shall always assume that whenever a family $\{\langle X_{n},d_{n}\rangle:n\in \mathbb{N}\}$ of metric spaces is given, then, the product $X=\prod\limits_{n\in \mathbb{N}}X_{n}$ carries the metric $d$
given by (\ref{0}).

Let  $\{X_j: j\in J\}$ be a disjoint family of sets, that is, $X_i\cap X_j=\emptyset$ for each pair $i,j$ of distinct elements of $J$. If $\tau_j$ is a topology on $X_j$ for every $j\in J$, then $\bigoplus\limits_{j\in J}\mathbf{X}_j$ denotes the direct sum of the spaces $\mathbf{X}_j=\langle X_j, \tau_j\rangle$ with $j\in J$.  Given a family $\{d_j: j\in J\}$ such that, for every $j\in J$, $d_j$ is a metric on $X_j$, one can define a metric $d$ on $X=\bigcup\limits_{j\in J}X_j$ as follows:
$$
(\ast)\text{ } d(x,y)=\begin{cases} 

1 &\text{ if there exist } i,j\in J \text{ such that } i\neq j,\\
& x\in X_i \text{ and } y\in X_j, \\ 
\min\{d_j(x,y), 1\}& \text{ if there exists } j\in J \text{ such that }  x,y\in X_j.
\end{cases}
$$

Then $\tau(d)$, where $d$ is defined by ($\ast$), coincides with the topology of the direct sum $\bigoplus\limits_{j\in j}\langle X_j, \tau(d_j)\rangle$, and the metric space $\langle \bigcup\limits_{j\in J}X_j, d\rangle$ is called the direct sum of the family $\{\langle X_j, d_j\rangle: j\in J\}$. In abbreviation, direct sums are called \emph{sums}.

Other topological notions used in this article but not defined here are standard. They can be found, for instance, in \cite{En} and \cite{w}. 

\subsection{The list of weaker forms of $\mathbf{AC}$}
\label{s1.3}

In this subsection, for the convenience of readers, we define and denote most of the weaker forms of $\mathbf{AC}$ used directly in this paper. If a form is not defined in the forthcoming sections, its definition can be found in this subsection. For the known forms given in \cite{hr}, \cite{hr1} or \cite{hh}, we quote in their statements the form number under which they are recorded in \cite{hr} (or in \cite{hr1} if they do not appear in \cite{hr}) and, if possible, we refer to their definitions in \cite{hh}. 

\begin{definition}
\label{s1d4}
\begin{enumerate}
\item $\mathbf{CAC}$ (\cite[Form 8]{hr}, \cite[Definition 2.5]{hh}): Every denumerable family of non-empty sets has a choice function.

\item  $\mathbf{CAC}(\mathbb{R})$ (\cite[Form 94]{hr}, \cite[Definition 2.9(1)]{hh}): Every denumerable family of non-empty subsets of $\mathbb{R}$ has a choice function.

\item $\mathbf{CAC}(\leq 2^{\aleph_0})$ (\cite[Form 16]{hr}):  For every family $\mathcal{A}=\{A_n: n\in\omega\}$ of non-empty sets such that, for every $n\in\omega$, $|A_n|\leq|\mathbb{R}|$, it holds that $\mathcal{A}$ has a choice function.

\item $\mathbf{CAC}_{\omega}$ (\cite[Form 32]{hr}): Every denumerable family of denumerable sets has a choice function.

\item $\mathbf{CAC}_{fin}$ (\cite[Form 10]{hr}, \cite[Definition 2.9(3)]{hh}): Every denumerable family of non-empty finite sets has a choice function.

%\item $\mathbf{CAC}_n$ where $n\in\omega\setminus\{0,1\}$ (\cite[Form 288(n)]{hr}): Every denumerable family of $n$-element sets has a choice function. 

%\item $\mathbf{PCAC}_2$ (Form [18 A] in \cite{hr}): Every denumerable family of two-element sets has a partial choice function. 

\item $\mathbf{CMC}$ (\cite[Form 126]{hr}, \cite[Definition 2.10]{hh}): Every denumerable family of non-empty sets has a multiple choice function.

\item $\mathbf{CMC}_{\omega }$ (\cite[Form 350]{hr}): Every denumerable family of denumerable sets has a multiple choice function.

\item $\mathbf{CMC}(\leq 2^{\aleph_0})$: Every denumerable family of non-empty sets,
each of size $\leq |\mathbb{R}|$, has a multiple choice function.

\item $\mathbf{CUC}$ (\cite[Form 31]{hr}, \cite[Definition 3.2(1)]{hh}): Every countable union of
countable sets is countable.

\item $\mathbf{UT}(\aleph_0, \aleph_0, cuf)$ (\cite[Form 420]{hr1}): Every countable union of countable sets is a cuf set. (Cf. also \cite{hdhkr}.)

\item $\mathbf{IDI}$ (\cite[Form 9]{hr}): Every Dedekind-finite set is finite. (Equivalently, every infinite set is Dedekind-infinite.)

\item $\mathbf{AC}_{fin}$ (\cite[Form 62]{hr}): Every non-empty family of non-empty finite sets has a choice function.

\item $\mathbf{AC}_{WO}$ (\cite[Form 60]{hr}): Every non-empty family of non-empty well-orderable sets has e choice function.

\item $\mathbf{WOAC}_{fin}$ (\cite[Form 122]{hr}): Every non-empty well-orderable family of non-empty finite sets has a choice function.

\end{enumerate}
\end{definition}

\begin{remark}
\label{s1r5} 
The following are well-known facts in $\mathbf{ZF}$:
\begin{enumerate}
\item[(i)] $\mathbf{CAC}_{fin}$ is equivalent to each of the following sentences:
\begin{enumerate}
\item[(a)]  Every infinite well-ordered family of
non-empty finite sets has a partial choice function. (See \cite[Form \text{[10 O]}]{hr} and \cite[p. 23, Diagram 3.4]{hh}.)
\item[(b)] Every denumerable family of non-empty finite sets has a partial choice function. (See \cite[Form \text{[10 E]}]{hr}.)
\item[(c)] $\mathbf{CUC}_{fin}$: Every countable union of finite sets is countable. (See \cite[Definition 3.2(3)]{hh}.)
\end{enumerate}
 
\item[(ii)]  $\mathbf{CAC}$ is equivalent to the sentence: Every denumerable family of non-empty sets has a partial choice function. (See \cite[Form \text{[8 A]}]{hr}.)

\item[(iii)] $\mathbf{CMC}_{\omega}$ is equivalent to the following sentence: Every denumerable family of denumerable sets has a multiple choice function. 

\item[(iv)] The implications $\mathbf{CAC}(\leq 2^{\aleph_0})\rightarrow\mathbf{CUC}\rightarrow\mathbf{CAC}_{\omega}$ are true in every model of $\mathbf{ZF}$ (see \cite[p. 328]{hr}).  In Felgner's model $\mathcal{M}20$ in \cite{hr}, $\mathbf{CAC}_{\omega}$ is true and $\mathbf{CUC}$ is false. In Cohen's original model $\mathcal{M}1$ in \cite{hr}, $\mathbf{CUC}$ is true and $\mathbf{CAC}(\leq 2^{\aleph_0})$ is false. 
\end{enumerate}
\end{remark}

Let us pass to definitions of forms concerning metric and metrizable spaces.

\begin{definition}
\label{s1d7}
\begin{enumerate}
\item $\mathbf{CAC}(C,\mathbf{M}_{le})$: If $%
\{\langle X_{i},\tau_{i}\rangle:i\in \mathbb{N}\}$ is a family of non-empty compact metrizable spaces,
then the family $\{X_{i}:i\in \mathbb{N}\}$ has a choice function.

\item $\mathbf{CAC}(\mathbb{R},C)$: For every disjoint family $%
\mathcal{A}=\{A_{n}:n\in \mathbb{N}\}$ of non-empty subsets of $\mathbb{R}$,
if there exists a family $\{d_{n}:n\in \mathbb{N}\}$ of metrics such that, for every $n\in \mathbb{%
N},\langle A_{n},d_{n}\rangle$ is a compact metric space, then $\mathcal{A}$ has a choice function.

%\item $\mathbf{CAC}(\leq 2^{\aleph_0}, \mathbf{M}_{le})$: For every family $\{\langle X_n, \tau_n\rangle: n\in\mathbb{N}\}$  of non-empty metrizable spaces such that, for each $n\in\mathbb{N}$, $X_n$ is equipotent to a subset of $\mathbb{R}$, the family $\{X_n: n\in\mathbb{N}\}$ has choice function.

\item $\mathbf{CAC}(C,M)$: If $\{\langle X_n, d_n\rangle: n\in\omega\}$ is a family of non-empty compact metric spaces, then the family $\{X_n: n\in\omega\}$ has a choice function.

\item $\mathbf{CPM}(C, C)$: All countable products of compact metric spaces are compact. (Cf. \cite{kt}.)

\item $\mathbf{CPM}(C, S)$: All countable products of compact metric spaces are separable. (Cf. \cite{kt}.)

\item $\mathbf{CPM}_{le}$: Every countable product of metrizable spaces is
metrizable. 

\item $\mathbf{CSM}_{le}$ (Form 418 in \cite{hr1}): Every countable sum of
metrizable spaces is metrizable.

\item $\mathbf{M}(C,S)$: Every compact metrizable space is separable. (Cf. \cite{kk1}.)

\item $\mathbf{M}( C(\nleq\aleph_0), \geq 2^{\aleph_0})$: Every uncountable compact metrizable space is of size $\geq 2^{\aleph_0}$. 
\end{enumerate}
\end{definition}

The form  $\mathbf{M}( C(\nleq\aleph_0), \geq 2^{\aleph_0})$ is newly introduced here for its applications shown in Section \ref{s4}.

The forms from our next two definitions will be called  \emph{forms of type} $\mathbf{CPM}_{le}(\square, \square)$. They are defined in the spirit of \cite{kt}.

\begin{definition}
\label{s1d8}
Let $M, C, S, 2$ be the following properties: $M$--to be a metrizable space; $C$--to be a compact space; $S$--to be a separable space; $2$--to be a second-countable space. For properties $P,Q,R,T\in \{M, C, S, 2\}$, we define the following forms:
\begin{enumerate}
\item $\mathbf{CPM}_{le}(PQ,RT)$: Every countable product of
metrizable spaces, each having the properties $P$ and $Q$, has the properties $R$ and $T$.
\item $\mathbf{CPM}_{le}(P,T)$: Every countable product of metrizable spaces, each having the property $P$, has the property $T$.
\item $\mathbf{CPM}_{le}(PQ,T)$: Every countable product of metrizable spaces, each having the properties $P$ and $Q$, has the property $R$.
\item $\mathbf{CPM}_{le}(P, RT)$: Every countable product of metrizable spaces, each having the property $P$, has the properties $R$ and $T$.
\end{enumerate}
\end{definition}

Definition \ref{s1d8} is sufficient to get, for example, the following:

\begin{definition}
\label{s1d9}
\begin{enumerate}
\item  $\mathbf{CPM}_{le}(CS,MS)$: Every countable product of compact,
separable metrizable spaces is metrizable and separable.

\item $\mathbf{CPM}_{le}(C,MC)$: All countable products of compact
metrizable spaces are metrizable and compact.

\item $\mathbf{CPM}_{le}(CS,M)$: All countable products of compact,
separable, metrizable spaces are metrizable.

\item $\mathbf{CPM}_{le}(C,S)$: Every countable product of compact
metrizable spaces is separable.
\end{enumerate}
\end{definition}

Given a form $\mathbf{CPM}_{le}(\square, \square)$ concerning countable (denoted by $\mathbf{C}$) products (denoted by $\mathbf{P}$) of metrizable spaces (denoted by $\mathbf{M}$), if we replace $\mathbf{P}$ with $\mathbf{S}$, we obtain the form $\mathbf{CSM}_{le}(\square, \square)$ concerning countable sums of metrizable spaces. For example, using this rule, we can get the following:

\begin{definition}
\label{s1d10}
\begin{enumerate}
\item $\mathbf{CSM}_{le}(CS,MS)$: Every countable sum of compact, separable,
metrizable spaces is metrizable and separable.

\item $\mathbf{CSM}_{le}(C,MC)$: Every countable sum of compact metrizable
spaces is metrizable and compact;

\item $\mathbf{CSM}_{le}(CS,M)$: Every countable sum of compact, separable,
metrizable spaces is metrizable;

\item $\mathbf{CSM}_{le}(C,2)$: Every countable sum of compact metrizable
spaces is second countable.
\end{enumerate}
\end{definition}

There are differences between the notation in Definitions \ref{s1d8}-\ref{s1d10} and in \cite{kt}. For instance, if $P,T$ are  topological properties, then our $\mathbf{CPM}_{le}(P,MT)$  and $\mathbf{CPM}_{le}(P,T)$ from Definition \ref{s1d8} can be non-equivalent, while the  form $\mathbf{CPM}_{le}(P, T)$ in \cite{kt} coincides with the form $\mathbf{CPM}_{le}(P,MT)$ from our Definition \ref{s1d8}.

\subsection{The content of the article in brief}
\label{s1.4}

This article is devoted to the forms of type $\mathbf{CPM}_{le}(\square, \square)$ and $\mathbf{CSM}_{le}(\square, \square)$. Although, to a great extent, this work can be regarded as a continuation of \cite{kt}, many new results are included in the forthcoming sections. 

In Section \ref{s2}, we prove that the statements $\mathbf{CPM}_{le}(C,2)$, $\mathbf{CPM}_{le}(C,M2)$, $\mathbf{CPM}_{le}(C,MS)$, $\mathbf{CSM}_{le}(C,2)$, $\mathbf{CSM}%
_{le}(C,MS)$ and $\mathbf{CSM}_{le}(C,M2)$, as we have  expected, are all
equivalent statements in $\mathbf{ZF}$ (see Theorem \ref{s2t2}) and, in consequence, none of these
statements is a theorem of $\mathbf{ZF}$ (this follows immediately, for instance, from Theorem \ref{s1t16}). We show in Theorem \ref{s2t5} that $\mathbf{CAC}(\mathbb{R})$ implies that $\mathbf{CPM}_{le}(C,S)$ and $\mathbf{CPM}_{le}(C,2)$ are equivalent.  
In Theorem \ref{s2t12}, we show three implications; in particular,  we prove in $\mathbf{ZF}$ that the conjunction $\mathbf{CMC}(\leq 2^{\aleph_0})\wedge\mathbf{CPM}_{le}(C,C))$ implies $\mathbf{CPM}_{le}(C,2M)$, and $\mathbf{CPM}_{le}(C,2)$ implies $\mathbf{CPM}_{le}(C,C2)$.
We deduce that $\mathbf{CSM}_{le}(C,M)$ implies that  $\mathbf{CPM}_{le}(C,S)$, $\mathbf{CPM}_{le}(C,C)$, $\mathbf{CAC}(C,\mathbf{M}_{le})$ and $\mathbf{M}(C,S)$ are all equivalent in $\mathbf{ZF}$ (see Theorem \ref{s2t15}). 
Theorem \ref{s2t18} contains three distinct equivalents of $\mathbf{CUC}$ in $\mathbf{ZF}$; in particular, it shows that, in $\mathbf{ZF}$,  $\mathbf{CUC}$ is equivalent to the statement: ``Every countable product of compact, countable metrizable spaces is both compact and second-countable''. 

Theorem \ref{s2t17} is of special importance here. It contains five implications that are true in $\mathbf{ZF}$. The first two implications of Theorem \ref{s2t17} are the following: $\mathbf{CPM}_{le}(C,S)\rightarrow\mathbf{CUC}$ and $\mathbf{CPM}_{le}(CS,M)\rightarrow \mathbf{UT}(\aleph_0, \aleph_0, cuf)$.

In Section \ref{s3.1}, we remark that the conjunction $\mathbf{CUC}\wedge\neg\mathbf{CSM}_{le}\wedge\neg\mathbf{IDI}\wedge\neg\mathbf{CAC}(\leq 2^{\aleph_0})$ has a $\mathbf{ZF}$-model. In Section \ref{s3.2}, we prove that the permutation model constructed in \cite[proof of Theorem 14]{kt} is a $\mathbf{ZFA}$-model for the conjunction $\mathbf{CUC}\wedge\mathbf{IDI}\wedge\mathbf{WOAC}_{fin}\wedge\neg\mathbf{CSM}_{le}\wedge\neg\mathbf{CAC}(\leq 2^{\aleph_0})$, and we construct a new permutation model for the latter conjunction is Section \ref{s3.3}. 
Theorem \ref{s3t10} is the main result of Section \ref{s3.4}. It  asserts that the implications of Theorem \ref{s2t17} are not reversible in $\mathbf{ZF}$. 

In Section \ref{s4}, a new permutation model is constructed in which the forms $\mathbf{IDI}$, $\mathbf{WOAC}_{fin}$ and $\mathbf{M}(C(\not\leq\aleph_{0}),\geq 2^{\aleph_{0}})$ are all true, but the forms $\mathbf{CMC}_{\omega}$, $\mathbf{CAC}(C, \mathbf{M}_{le})$ and $\mathbf{CPM}_{le}(C,C)$ are all false (see Theorem \ref{t:main}).

A shortlist of open problems is included in Section \ref{s5}. Section \ref{s6} contains a diagram illustrating the implications established here. 

\subsection{A list of several known theorems}
\label{s1.5}

We list below some known theorems for future references.

\begin{theorem}
\label{s1t11}
$(\mathbf{ZF})$ 
\begin{enumerate}
\item[(i)] (Cf.  \cite{kt} and \cite[Section 4.7, Exercises E1-E2]{hh}.)  $\mathbf{CPM}_{le}$ and $\mathbf{CSM}_{le}$ are both equivalent to the following sentence: For any family $\{\langle X_{n}, \tau_{n}\rangle: n\in \mathbb{N}\}$ of metrizable spaces, there exists a
family of metrics $\{d_{n}: n\in \mathbb{N}\}$ such that, for every $n\in \mathbb{N%
},\tau(d_{n})=\tau_{n}$.

\item[(ii)] $\mathbf{CPM}_{le}\mathbf{(}C,M)$ and $\mathbf{CSM}_{le}\mathbf{(}C,M)$ are both equivalent to the following sentence: For every family $\{\langle X_{n}, \tau_{n}\rangle: n\in \mathbb{N}\}$ of compact metrizable spaces, there exists a
family of metrics $\{d_{n}: n\in \mathbb{N}\}$ such that, for every $n\in \mathbb{N%
},\tau(d_{n})=\tau_{n}$.
\end{enumerate}
\end{theorem}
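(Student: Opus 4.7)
The plan is to close the equivalence cycle by establishing: (a) metric-choice implies both $\mathbf{CPM}_{le}$ and $\mathbf{CSM}_{le}$; (b) $\mathbf{CSM}_{le}$ implies metric-choice; and (c) $\mathbf{CPM}_{le}$ implies metric-choice. Direction (a) will be immediate: given a uniform family $\{d_n\}$ of compatible metrics, formula (\ref{0}) defines a metric inducing the product topology on $\prod_n X_n$ and formula $(\ast)$ defines one inducing the sum topology on $\bigcup_n X_n$, so no further choices are needed. For (b) I would first disjointify via the canonical copies $\mathbf{X}_n' := \mathbf{X}_n \times \{n\}$ (a choice-free operation), apply $\mathbf{CSM}_{le}$ to obtain a metric $\rho$ on $\bigoplus_n \mathbf{X}_n'$, and read off $d_n$ on $X_n$ as the transport of the restriction $\rho|_{X_n' \times X_n'}$ along the canonical bijection $X_n \cong X_n'$; because each $X_n'$ is clopen in the sum, $d_n$ induces $\tau_n$.

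The hard part is (c), because an arbitrary compatible metric on $\prod_n X_n$ does not by itself yield metrics on the factors: restricting to $X_n$ requires fixing the other coordinates to specific values, which is a choice issue. My plan is to introduce canonical basepoints in advance. For each non-empty $X_n$ I would set $\ast_n := X_n$; by Foundation $\ast_n \notin X_n$, so the set $Y_n := X_n \cup \{\ast_n\}$ becomes a metrizable space $\mathbf{Y}_n$ under the topology making $X_n$ open in $Y_n$ and $\ast_n$ isolated, and the constant tuple $(\ast_n)_{n \in \mathbb{N}}$ is a canonical point of $\prod_n Y_n$. Empty $X_n$ would be handled with the empty metric. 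Applying $\mathbf{CPM}_{le}$ to the family $\{\mathbf{Y}_n : n \in \mathbb{N}\}$ would yield a compatible metric $\rho$ on $\prod_n \mathbf{Y}_n$. For each $n$ I would define $\iota_n : X_n \to \prod_m Y_m$ by $\iota_n(x)(n) = x$ and $\iota_n(x)(m) = \ast_m$ for $m \neq n$; because each $\ast_m$ is isolated in $\mathbf{Y}_m$, a routine check shows that $\iota_n$ is a topological embedding (any basic open meeting $\iota_n(X_n)$ may be shrunk to the form $U_n \times \prod_{m \neq n} \{\ast_m\}$ with $U_n$ open in $\mathbf{X}_n$), so $d_n(x, y) := \rho(\iota_n(x), \iota_n(y))$ is a compatible metric on $\mathbf{X}_n$ and the whole family $\{d_n\}$ is defined uniformly from the single $\rho$, without further choices.

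For part (ii) the same three steps apply verbatim once one notes that adjoining a single isolated point to a compact metrizable space preserves both properties, so $\mathbf{Y}_n$ is compact metrizable whenever $\mathbf{X}_n$ is, and that the disjointifying copies $\mathbf{X}_n' := \mathbf{X}_n \times \{n\}$ remain compact metrizable as well. Hence $\mathbf{CPM}_{le}(C, M)$ and $\mathbf{CSM}_{le}(C, M)$ will each be equivalent to the analogous metric-choice statement restricted to compact metrizable families.
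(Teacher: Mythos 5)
Your proof is correct and follows essentially the standard route: the paper states Theorem~\ref{s1t11} without proof (citing \cite{kt} and \cite{hh}), but your device of adjoining a canonically chosen isolated basepoint $\ast_n=X_n$ (legitimate by Foundation) so that the slices $\iota_n[X_n]$ embed the factors into the product is exactly the trick the authors themselves use elsewhere with the points $\infty_n$ (e.g., in the proof of Theorem~\ref{s2t2}), and your disjointification and clopen-restriction arguments for the sum case, as well as the use of formulas (\ref{0}) and $(\ast)$ for the converse, are the standard ones and go through verbatim for the compact case in part (ii).
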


\begin{theorem}
\label{s1t12}
(Cf. \cite[Theorem 2.1]{ew}.) $(\mathbf{ZF})$
If $J$ is a cuf set and $\{\langle X_j, d_j\rangle: j\in J\}$ is a family of metric spaces, then the product $\prod\limits_{j\in J}\langle X_j, \tau(d_j)\rangle$ is metrizable.
\end{theorem}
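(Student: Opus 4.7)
My plan is to reduce the problem, via the natural associativity of products, to a countable product of metric spaces and then invoke the standard metrization formula (\ref{0}).

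First I would fix a representation of $J$ as a cuf set. Since $J$ is a countable union of finite sets, I can write $J=\bigcup_{n\in\omega}F_n$, and by replacing each $F_n$ with $F_n\setminus\bigcup_{k<n}F_k$ I may assume without loss of generality that the family $\{F_n:n\in\omega\}$ is pairwise disjoint (each $F_n$ remains finite). For each $n\in\omega$ I would metrize the finite product $\prod_{j\in F_n}X_j$ by the canonical formula
\[
d^{(n)}(x,y)=\max_{j\in F_n}\min\{d_j(x(j),y(j)),1\},
\]
which requires no choice because $F_n$ is finite and the maximum of a non-empty finite set of real numbers exists and is unique. A routine check (entirely internal to $\mathbf{ZF}$) shows that $d^{(n)}$ is a metric on $\prod_{j\in F_n}X_j$ inducing precisely the Tychonoff product topology coming from $\{\tau(d_j):j\in F_n\}$.

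Next I would apply formula (\ref{0}) to the countable family $\{\langle\prod_{j\in F_n}X_j,d^{(n)}\rangle:n\in\omega\}$ to obtain a single metric $D$ on $\prod_{n\in\omega}\prod_{j\in F_n}X_j$ inducing the product topology. Finally I would exhibit the canonical bijection
\[
\Phi:\prod_{j\in J}X_j\to\prod_{n\in\omega}\prod_{j\in F_n}X_j,\qquad \Phi(f)(n)=f\upharpoonright F_n,
\]
whose inverse is given, for each $g$ and each $j\in J$, by $\Phi^{-1}(g)(j)=g(n_j)(j)$ where $n_j$ is the unique index with $j\in F_{n_j}$ (this uniqueness is where disjointness matters, and no choice is needed). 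Pulling $D$ back along $\Phi$ yields a metric on $\prod_{j\in J}X_j$, and since basic open sets in either topology restrict only finitely many coordinates, a short comparison of sub-basic open sets shows that $\Phi$ is a homeomorphism, so the pulled-back metric induces the original product topology.

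The main obstacle to watch out for is avoiding any hidden appeal to choice. Two places demand care: canonically metrizing each finite product (handled by using $\max$ over a finite set, which needs no choice function) and constructing the inverse of $\Phi$ (handled by ensuring the $F_n$'s are disjoint, so that $n_j$ is uniquely determined by $j$). Once these are both handled canonically, all remaining verifications---that $d^{(n)}$ induces the finite product topology, that $D$ is a metric, and that $\Phi$ is a homeomorphism---are routine $\mathbf{ZF}$ arguments, completing the proof.
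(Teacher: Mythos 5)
Your proof is correct, and every step is indeed choice-free: the disjointified cuf decomposition is obtained by a definable operation from a fixed enumeration $J=\bigcup_{n\in\omega}F_n$, the truncated max metric $d^{(n)}$ on each finite block is given by a single uniform formula (so the family $\{d^{(n)}:n\in\omega\}$ exists without any selection), formula (\ref{0}) is a $\mathbf{ZF}$ fact, and the associativity bijection $\Phi$ is canonical. Note that the paper itself offers no proof of this statement --- it is quoted as Theorem 2.1 of \cite{ew} in the list of known results --- but your argument is the standard one and matches what that reference does. The only cosmetic point: after disjointification some $F_n$ may be empty, in which case $\prod_{j\in F_n}X_j$ is a one-point space and you should set $d^{(n)}=0$ there rather than take a maximum over the empty set (or simply discard such indices).
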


\begin{theorem}
\label{s1t13} 
(Cf. \cite[Theorem 2.2]{ew}.) $(\mathbf{ZF})$ Let $\mathbf{X}$ be a metrizable space consisting of at
least two points. Then, for a set $J$, the following conditions are
equivalent:
\begin{enumerate}
\item[(i)] $\mathbf{X}^{J}$ is metrizable;
\item[(ii)] $\mathbf{X}^{J}$ is first-countable;
\item[(iii)] $J$ is a cuf set.
\end{enumerate}
\end{theorem}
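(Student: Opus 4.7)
I would prove the cycle (iii)$\Rightarrow$(i)$\Rightarrow$(ii)$\Rightarrow$(iii). The first two implications are immediate: for (iii)$\Rightarrow$(i), pick any metric $d$ on $X$ inducing its topology and apply Theorem~\ref{s1t12} to the constant family $\{\langle X,d\rangle: j\in J\}$; and (i)$\Rightarrow$(ii) is the standard fact that in a metric space the open balls of radius $1/n$ form a countable local base at each point, requiring no choice.

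The substantive direction is (ii)$\Rightarrow$(iii), and the main issue is to avoid any use of countable choice when extracting, from a countable local base, a finite supporting set of coordinates for each member. Fix distinct $a,b\in X$, let $p\in X^J$ be the constant function with value $a$, and let $\{U_n: n\in\omega\}$ be a countable local base at $p$ provided by first-countability. For $j\in J$ and $x\in X$, let $p[j/x]\in X^J$ denote the element that equals $x$ at coordinate $j$ and equals $a$ elsewhere. Rather than selecting a basic open subset of each $U_n$, I would define directly
$$W_{n,j}=\{x\in X: p[j/x]\in U_n\}\quad\text{and}\quad F_n=\{j\in J: W_{n,j}\neq X\}.$$
Each $W_{n,j}$ is open in $\mathbf{X}$ and contains $a$. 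Moreover $F_n$ is finite, because any basic open neighborhood $\prod_{j\in J}B_j$ of $p$ contained in $U_n$ restricts only finitely many coordinates, and for every unrestricted coordinate $j$ one automatically obtains $W_{n,j}=X$. Crucially, this finiteness needs only the \emph{existence} of such a basic neighborhood inside $U_n$, never a \emph{choice} of one. Hence $F=\bigcup_{n\in\omega}F_n$ is a cuf subset of $J$ produced without any appeal to choice.

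To conclude, I would argue $J=F$. Since $\mathbf{X}$ is metrizable with at least two points, it is Hausdorff, so there is an open set $W\subseteq X$ with $a\in W$ and $b\notin W$; in particular $W\neq X$. If there were $j\in J\setminus F$, then $\pi_j^{-1}(W)$ would be an open neighborhood of $p$ in $\mathbf{X}^J$, hence would contain some $U_n$, which forces $W_{n,j}\subseteq W\subsetneq X$ and contradicts $j\notin F_n$. Thus $J=F$ is cuf. The main obstacle is precisely what this construction sidesteps: producing the cuf witness $F$ as a \emph{definable} object, rather than via a choice-dependent extraction of basic open sets from the members of the local base.
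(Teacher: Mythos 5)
Your proof is correct. Note that the paper itself gives no proof of this theorem: it appears in Section~\ref{s1.5} as a quoted result from \cite{ew} (Theorem 2.2 there), so there is no in-paper argument to compare against; your write-up is essentially the standard argument for that result. The easy directions are handled properly: (iii)$\Rightarrow$(i) via Theorem~\ref{s1t12} applied to the constant family (only a single choice of a compatible metric $d$ is needed, which is harmless in $\mathbf{ZF}$), and (i)$\Rightarrow$(ii) is choice-free. For the substantive direction (ii)$\Rightarrow$(iii), your key move --- defining $W_{n,j}=\{x\in X: p[j/x]\in U_n\}$ and $F_n=\{j: W_{n,j}\neq X\}$ intrinsically, and observing that the finiteness of $F_n$ follows from the mere \emph{existence} of a basic box inside $U_n$ rather than from a selection of one --- is exactly the right way to make the support argument choice-free, and the concluding step $J=F$ correctly uses only that $\mathbf{X}$ is $T_1$ with at least two points (so that some open $W$ with $a\in W\subsetneq X$ exists). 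The only cosmetic caveat is that the paper's definition of first-countability uses neighbourhood bases whose members need not be open; your argument survives this unchanged, since the finiteness of $F_n$ only requires a basic open box between $p$ and $U_n$, and the final step only requires that some $U_n$ be contained in $\pi_j^{-1}(W)$.
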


\begin{theorem}
\label{s1t14}
$(\mathbf{ZF})$ 
\begin{enumerate}
\item[(a)] (Cf. \cite{kert} and \cite{kt}.)  A compact metrizable space is Loeb iff it is
second-countable iff it is separable. In consequence, $\mathbf{M}(C,S)$ iff every compact metrizable space is Loeb iff every compact metrizable space is second-countable.
\item[(b)] (Cf. \cite[Theorem 8]{kt}.) The statements $\mathbf{M}(C,S)$, $\mathbf{CAC}(C,M)$,\newline $\mathbf{CPM}(C,S)$ and  $\mathbf{CPM}(C,C)$ are all equivalent.
\item[(c)] (Cf. \cite[Corollary 1 (a)]{kt}.) $\mathbf{CAC}(C,M)$ implies $\mathbf{CAC}_{fin}$.
\item[(d)] (Cf. \cite[Proposition 10(iii)]{ktw1}.) $\mathbf{CAC}_{fin}$ does not imply $\mathbf{M}(C,S)$.
\item[(e)] (Cf. \cite{kk}.) $\mathbf{CMC}$ implies  $\mathbf{CPM}_{le}$. In particular, $%
\mathbf{CMC}$ implies $\mathbf{CPM}_{le}(C,M)$.
\end{enumerate}
\end{theorem}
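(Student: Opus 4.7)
The theorem packages five items from the literature; I outline how I would approach each.

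For (a), fix a compatible metric $d$ on $\mathbf{X}$. If $\mathbf{X}$ is separable with a countable dense set $D$ indexed by $\omega$, the family $\{B_d(x,1/n):x\in D,\ n\in\mathbb{N}\}$ is a canonical countable base, giving second-countability in $\mathbf{ZF}$ without any choice. If $\mathbf{X}$ is second-countable with base $\{B_n:n\in\omega\}$, I would make $\mathbf{X}$ Loeb by assigning to a non-empty closed $F\subseteq X$ the unique point of the intersection of a canonically chosen nested sequence of closed balls: at step $k$ pick the least $n_k$ with $B_{n_k}\cap F\neq\emptyset$, $\cl_{\mathbf{X}}(B_{n_k})$ refining the previous choice, and $\delta_d(B_{n_k})<1/k$; compactness and shrinking diameters force $\bigcap_k \cl_{\mathbf{X}}(B_{n_k})\cap F$ to be a singleton. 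If $\mathbf{X}$ is Loeb with Loeb function $\ell$, I would produce a canonical finite $1/n$-net by the recursion $x_0=\ell(X)$ and $x_{k+1}=\ell(X\setminus\bigcup_{i\leq k}B_d(x_i,1/n))$ while the remainder is non-empty; the points are $1/n$-separated, so compactness (equivalently, total boundedness) forces termination, and the union over $n\in\mathbb{N}$ of these nets is a countable dense set.

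For (b), I would route all equivalences through $\mathbf{M}(C,S)$. The implication $\mathbf{M}(C,S)\Rightarrow\mathbf{CAC}(C,M)$ uses (a): each $\langle X_n,d_n\rangle$ is second-countable through the canonical base built from $d_n$ and a dense set, hence Loeb with a canonical Loeb function, and evaluating at $X_n$ gives the required choice. The implication $\mathbf{CPM}(C,S)\Rightarrow\mathbf{M}(C,S)$ is obtained by putting $X_n=X$ for a given compact metrizable $\mathbf{X}$: the product $\mathbf{X}^\omega$ is metrizable by Theorem \ref{s1t12} and separable by $\mathbf{CPM}(C,S)$, and continuity of $\pi_0$ transfers separability back to $\mathbf{X}$. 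For $\mathbf{CAC}(C,M)\Rightarrow\mathbf{CPM}(C,C)$ and $\mathbf{CPM}(C,C)\Rightarrow\mathbf{CPM}(C,S)$ I would follow the route of \cite[Theorem 8]{kt}: the former by producing convergent subsequences coordinatewise using $\mathbf{CAC}(C,M)$ and diagonalizing to get sequential (hence topological) compactness of the metric product; the latter by extracting a countable dense subset of the compact metric product using total boundedness combined with the choice data secured by applying $\mathbf{CPM}(C,C)$ to appropriate closed subproducts.

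For (c), equip each member of a denumerable family of non-empty finite sets with the discrete metric; each becomes a compact metric space, and $\mathbf{CAC}(C,M)$ supplies a choice function. For (d), I would rely on the permutation model in \cite[Proposition 10(iii)]{ktw1} and transfer via Theorem \ref{thm:Pinc}, exhibiting atoms arranged so that a Cantor-type compact metrizable space built on them fails to be separable while $\mathbf{CAC}_{fin}$ survives. For (e), given metrizable spaces $\{\mathbf{X}_n:n\in\mathbb{N}\}$, apply $\mathbf{CMC}$ to the non-empty sets of all compatible metrics on each $X_n$ to obtain a non-empty finite $F_n$ of compatible metrics; combine $F_n$ canonically, for instance by $d_n(x,y)=\max\{\min\{\rho(x,y),1\}:\rho\in F_n\}$, to produce a single compatible metric on each $X_n$, and conclude via Theorem \ref{s1t11}(i). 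The hardest step throughout is the absence of canonical selection: (a) circumvents this by using a well-indexed base, and the technically densest part is the direction $\mathbf{CPM}(C,C)\Rightarrow\mathbf{CPM}(C,S)$ in (b), which must manufacture a countable dense set in a compact metric product from compactness alone.
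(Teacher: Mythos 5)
The paper does not prove Theorem \ref{s1t14} at all; it is a list of results quoted from \cite{kert}, \cite{kt}, \cite{ktw1} and \cite{kk}, so your reconstruction has to stand on its own. Items (a), (c) and (e) are essentially correct: the cycle separable $\rightarrow$ second-countable $\rightarrow$ Loeb $\rightarrow$ separable in (a) is the standard effective argument, (c) is exactly the discrete-metric observation, and in (e) the $\max$ of a $\mathbf{CMC}$-selected finite set of truncated compatible metrics together with Theorem \ref{s1t11}(i) is the standard proof. In (b), the implication $\mathbf{CPM}(C,S)\rightarrow\mathbf{M}(C,S)$ via $\mathbf{X}^{\omega}$ is also fine.

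Two steps of (b), however, have genuine gaps. First, for $\mathbf{M}(C,S)\rightarrow\mathbf{CAC}(C,M)$ you build, for each $n$, ``the canonical base from $d_n$ and a dense set'' and hence ``a canonical Loeb function''. All of the canonicity in (a) is relative to an initially chosen \emph{enumerated} dense subset; $\mathbf{M}(C,S)$ only asserts that such a subset exists for each $X_n$, and selecting one for every $n$ simultaneously is itself an instance of countable choice of exactly the kind you are trying to derive (were a Loeb function definable from $d_n$ alone, $\mathbf{M}(C,S)$ would be a theorem of $\mathbf{ZF}$). The standard repair is to amalgamate the $X_n$ into a single compact metric space --- rescale $d_n$ so that $\delta_{d_n}(X_n)\leq 2^{-n}$ and adjoin one point $\infty$ to which the $X_n$ converge --- apply $\mathbf{M}(C,S)$ once, and read a choice function off the single enumerated dense set by least-index selection; this is the same device the paper uses in the proof of Theorem \ref{s2t17}(i). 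Second, your route $\mathbf{CAC}(C,M)\rightarrow\mathbf{CPM}(C,C)$ ends with ``sequential (hence topological) compactness of the metric product'': in $\mathbf{ZF}$ a sequentially compact metric space need not be compact (both ``sequentially compact $\Rightarrow$ totally bounded'' and ``complete and totally bounded $\Rightarrow$ compact'' require choice-like principles), and the coordinatewise diagonalization itself makes $\omega$ \emph{dependent} choices of cluster points, which the simultaneous principle $\mathbf{CAC}(C,M)$ does not license. The choice-free route is via Loeb's theorem (Theorem \ref{s1t19}(i)): apply $\mathbf{CAC}(C,M)$ to the compact metric spaces of padded $1/k$-nets, which are closed subspaces of the finite powers $X_n^{m}$, to obtain a uniform family of enumerated dense subsets of the $X_n$, hence by the effective constructions in (a) a uniform family of Loeb functions; Theorem \ref{s1t19}(i) then yields compactness of the product, and the uniform dense sets yield $\mathbf{CPM}(C,S)$ as well. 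Your sketch of $\mathbf{CPM}(C,C)\rightarrow\mathbf{CPM}(C,S)$ is too vague to check; the clean route is $\mathbf{CPM}(C,C)\rightarrow\mathbf{CAC}(C,M)$ by the Kelley-type argument the paper invokes for Theorem \ref{s2t12}(i), and then around the cycle.
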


\begin{theorem}
\label{s1t15}$(\mathbf{ZF})$
\begin{enumerate}
\item[(i)] (Cf. \cite[Corollary 4.8]{gt}.)  $(\mathbf{ZF})$ (Urysohn's Metrization Theorem.) If $%
\mathbf{X}$ is a second-countable $T_3$-space, then $\mathbf{X}$ is metrizable.
\item[(ii)] (Cf. \cite[Theorem 2.1]{ktw2}.) If a $T_3$-space $\mathbf{X}$ has a cuf base, then $\mathbf{X}$ is metrizable.
\end{enumerate}
\end{theorem}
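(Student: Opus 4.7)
My plan is to prove both parts by the classical Urysohn embedding $\mathbf{X}\hookrightarrow[0,1]^P$ for a suitable index set $P$, handling the ZF-subtleties by forcing all auxiliary constructions to be canonical relative to the well-ordering of the base in (i) and the cuf decomposition of the base in (ii).

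For part (i), fix a countable base $\mathcal{B}=\{B_n:n\in\omega\}$ and first establish normality of $\mathbf{X}$ in ZF: given disjoint closed $A,B$, for each $x\in A$ let $n(x)=\min\{n:x\in B_n\text{ and }\cl_{\mathbf{X}}(B_n)\cap B=\emptyset\}$ (which exists by regularity); then the countable indexed family $\{B_{n(x)}:x\in A\}$ canonically covers $A$ with closures avoiding $B$, and the standard construction of disjoint open separating sets goes through choicelessly. Let $P=\{(n,m)\in\omega^2:\cl_{\mathbf{X}}(B_n)\subseteq B_m\}$. For each $(n,m)\in P$, build a continuous Urysohn-type function $f_{n,m}:\mathbf{X}\to[0,1]$ that vanishes on $\cl_{\mathbf{X}}(B_n)$ and equals $1$ off $B_m$, performing the recursive dyadic-rational construction canonically using the well-ordering of $\mathcal{B}$. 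Regularity plus the base property then implies that $\{f_{n,m}:(n,m)\in P\}$ separates points from closed sets, so $x\mapsto(f_{n,m}(x))_{(n,m)\in P}$ embeds $\mathbf{X}$ into the Hilbert cube $[0,1]^P$, which is metrizable. Hence $\mathbf{X}$ is metrizable.

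For part (ii), write the cuf base as $\mathcal{B}=\bigcup_{n\in\omega}\mathcal{F}_n$ with each $\mathcal{F}_n$ finite. The argument of (i) transfers verbatim with $P=\{(B,B')\in\mathcal{B}\times\mathcal{B}:\cl_{\mathbf{X}}(B)\subseteq B'\}$, noting that $\mathcal{B}\times\mathcal{B}$, and hence $P$, is cuf. I would construct a cuf-indexed family $\{f_{B,B'}:(B,B')\in P\}$ of Urysohn functions using the cuf structure to canonicalise intermediate-set choices, embed $\mathbf{X}$ into $[0,1]^P$ by the corresponding diagonal map, and then invoke Theorem \ref{s1t12} to conclude that $[0,1]^P$, being a cuf-indexed product of metric spaces, is metrizable; hence so is $\mathbf{X}$.

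The main technical obstacle is the ZF-ification of the recursive step in Urysohn's Lemma: the standard proof implicitly uses dependent choice when picking an intermediate open set at each dyadic rational. The remedy is to force each such intermediate set to be a canonical union of basic open sets with indices in a bounded initial segment of the well-ordered (respectively, cuf) base, so that the whole dyadic tree of open sets is produced by an explicit choiceless recursion. Propagating this canonicality through the normality argument, through the function construction, and through the verification that the resulting diagonal map is an embedding is the delicate part of the proof.
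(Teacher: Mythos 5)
This theorem is quoted in the paper as a known result (with references to \cite{gt} for (i) and \cite{ktw2} for (ii)) and no proof is given there, so there is nothing to compare against line by line. Your outline is the standard Urysohn-embedding argument that those references carry out, and it is sound: normality and the dyadic recursion in Urysohn's Lemma are made canonical by taking definable unions of basic open sets, and in (ii) one must remember that a cuf base need not be well-orderable in $\mathbf{ZF}$, so the canonical unions must be formed piecewise over the finite blocks $\mathcal{F}_n$ of the decomposition rather than by choosing a set of minimal index --- which your plan does accommodate --- before invoking Theorem \ref{s1t12} to metrize $[0,1]^P$.
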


\begin{theorem}
\label{s1t16}
(Cf. \cite{kw1}.) 
$(\mathbf{ZF})$ Each of the following statements implies the one beneath it:
\begin{enumerate}
\item[(i)] $\mathbf{CMC}$.

\item[(ii)] Every countable product of one-point Hausdorff compactifications of denumerable discrete spaces is metrizable (first-countable).

\item[(iii)] $\mathbf{CMC}_{\omega }$.
\end{enumerate}
\end{theorem}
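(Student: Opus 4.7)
The plan is to prove the two implications (i)~$\Rightarrow$~(ii) and (ii)~$\Rightarrow$~(iii).

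For (i)~$\Rightarrow$~(ii), observe first that the one-point Hausdorff compactification $\mathbf{X}$ of any denumerable discrete space $D$ is metrizable in $\mathbf{ZF}$: fixing a bijection $f:\mathbb{N}\to D$ (which exists since $D$ is denumerable), one pulls back to $X=D\cup\{\infty\}$ the usual metric of $\{0\}\cup\{1/n:n\in\mathbb{N}\}\subseteq\mathbb{R}$ via $f(n)\mapsto 1/n$ and $\infty\mapsto 0$. Consequently every factor in a countable product of such compactifications is metrizable (a fact provable in $\mathbf{ZF}$ for each individual factor). Since $\mathbf{CMC}\Rightarrow\mathbf{CPM}_{le}$ by Theorem~\ref{s1t14}(e), the product itself is metrizable, and in particular first-countable.

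For (ii)~$\Rightarrow$~(iii), let $\mathcal{A}=\{A_n:n\in\mathbb{N}\}$ be any denumerable family of denumerable sets. Standard $\mathbf{ZF}$-manipulations (pass from $A_n$ to $A_n\times\{n\}$ and take a canonically defined infinity point $\infty_n$ outside this disjointified copy) allow us to assume that the $A_n$'s are pairwise disjoint and that the infinity points $\infty_n\notin A_n$ are defined uniformly without any use of choice. Let $\mathbf{X}_n$ be the Hausdorff one-point compactification of the discrete space $A_n$, with point at infinity $\infty_n$; put $\mathbf{Y}=\prod_{n\in\mathbb{N}}\mathbf{X}_n$ and $x^*=(\infty_n)_{n\in\mathbb{N}}$. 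By hypothesis, $\mathbf{Y}$ is first-countable at $x^*$, so fix a countable base $\{U_k:k\in\mathbb{N}\}$ of open neighbourhoods of $x^*$ and define
\begin{equation*}
E_{n,k}=A_n\setminus\pi_n(U_k) \quad \text{for all } n,k\in\mathbb{N}.
\end{equation*}

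The substance of the argument lies in two claims. (a) Each $E_{n,k}$ is finite: $U_k$ contains a basic open neighbourhood $\prod_m V_m$ of $x^*$ with $V_m=X_m$ for almost all $m$ and each $V_m$ a cofinite subset of $X_m$ containing $\infty_m$; for every $a\in V_n\cap A_n$, the point of $Y$ that agrees with $x^*$ outside coordinate $n$ and takes the value $a$ on coordinate $n$ lies in $\prod_m V_m\subseteq U_k$, so $V_n\cap A_n\subseteq\pi_n(U_k)$ and hence $E_{n,k}\subseteq X_n\setminus V_n$ is finite. (b) For every $n\in\mathbb{N}$ the set $\{k:E_{n,k}\neq\emptyset\}$ is non-empty: fix $n$ and, using only the non-emptiness of $A_n$, pick any $a\in A_n$; then $\pi_n^{-1}(X_n\setminus\{a\})$ is an open neighbourhood of $x^*$, hence contains some $U_k$, forcing $a\in E_{n,k}$. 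Setting $k(n)=\min\{k\in\mathbb{N}:E_{n,k}\neq\emptyset\}$, the assignment $n\mapsto E_{n,k(n)}$ is a multiple choice function for $\mathcal{A}$, yielding $\mathbf{CMC}_\omega$.

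The main obstacle is policing the use of choice in (ii)~$\Rightarrow$~(iii): the finiteness in~(a) must be inferred from the mere \emph{existence} of a basic box below $U_k$, not from any chosen one, and the non-emptiness in~(b) must be argued separately for each fixed $n$, using only that $A_n\neq\emptyset$, so that no simultaneous selection of elements $a_n\in A_n$ is performed. Once (a) and (b) are in place, $k(n)$ is defined by taking a minimum over $\mathbb{N}$, which is innocuous in $\mathbf{ZF}$, and the conclusion follows.
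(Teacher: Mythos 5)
Your proof is correct. Note that the paper itself gives no proof of Theorem \ref{s1t16}: it is listed among the known results and cited from \cite{kw1}, so there is no in-paper argument to compare against. Your route is the standard one and it is sound on both halves: (i)$\rightarrow$(ii) correctly reduces to $\mathbf{CMC}\rightarrow\mathbf{CPM}_{le}$ (Theorem \ref{s1t14}(e)) after observing that each individual one-point compactification of a denumerable discrete space is metrizable in $\mathbf{ZF}$ (a bijection with $\mathbb{N}$ exists for each single denumerable set without choice, and $\mathbf{CPM}_{le}$ only asks the factors to be metrizable, not that metrics be chosen simultaneously); and in (ii)$\rightarrow$(iii) you correctly police the two delicate points, namely that the finiteness of $E_{n,k}=A_n\setminus\pi_n[U_k]$ is a statement provable for each fixed pair $(n,k)$ by existential instantiation of a basic box inside $U_k$, and that the non-emptiness of $\{k:E_{n,k}\neq\emptyset\}$ is argued separately for each fixed $n$ so that no simultaneous selection $a_n\in A_n$ is ever made; the map $n\mapsto E_{n,k(n)}$ with $k(n)$ a minimum over $\mathbb{N}$ is then a definable multiple choice function. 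For comparison, the literature (cf.\ Theorems \ref{s1t18} and \ref{s2t17}(ii)) usually factors (ii)$\rightarrow$(iii) through $\mathbf{UT}(\aleph_0,\aleph_0,cuf)$ (the sets $E_{n,k}$ witness that $\bigcup_n A_n$ is a cuf set); your direct derivation of $\mathbf{CMC}_{\omega}$ is an equally valid shortcut.
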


%\begin{theorem}
%\label{s1t17}
%(Cf. \cite[Theorem 3.29]{kw1}.)
% $(\mathbf{ZF})$ Let $J$ be a non-empty cuf set and let $\{\mathbf{X}_j: j\in J\}$ be a collection of $T_3$-spaces. Suppose that there exists a collection $\{\mathcal{B}_{j, m}: m\in\omega, j\in J\}$ such that, for arbitrary $j\in J$ and $m\in\omega$, $\mathcal{B}_{j, m}$ is a locally finite family in $\mathbf{X}_j$ such that $\mathcal{B}_j=\bigcup\limits_{m\in\omega}\mathcal{B}_{j, m}$ is a base of $\mathbf{X}_j$. Then $\mathbf{X}=\prod\limits_{j\in J}\mathbf{X}_j$ is metrizable.
%\end{theorem}

\begin{theorem}
\label{s1t18} (Cf. \cite{kw1}.) It holds in  $(\mathbf{ZF})$ that
 $\mathbf{UT}(\aleph_0, \aleph_0, cuf)$ is equivalent to the following sentence: Every countable product of one-point Hausdorff compactifications of denumerable discrete spaces is metrizable (equivalently, first-countable). 
\end{theorem}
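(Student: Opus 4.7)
The plan is to establish the biconditional by proving three implications: $\mathbf{UT}(\aleph_0,\aleph_0,cuf)$ implies metrizability of every such product; metrizability trivially implies first-countability; and first-countability of every such product implies $\mathbf{UT}(\aleph_0,\aleph_0,cuf)$.

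For the first implication, let $\{X_n:n\in\omega\}$ be a countable family of denumerable discrete spaces with one-point Hausdorff compactifications $X_n^{\ast}=X_n\cup\{\infty_n\}$. Using $\mathbf{UT}(\aleph_0,\aleph_0,cuf)$, I write $\bigcup_{n\in\omega}X_n=\bigcup_{k\in\omega}G_k$ with each $G_k$ finite and $G_k\subseteq G_{k+1}$, and set $H_n^k=X_n\cap G_k$. This yields, uniformly in $n$, a canonical base $\mathcal{B}_n=\{\{a\}:a\in X_n\}\cup\{X_n^{\ast}\setminus H_n^k:k\in\omega\}$ of $X_n^{\ast}$, since any neighbourhood $X_n^{\ast}\setminus F$ of $\infty_n$ has $F\subseteq H_n^k$ for some $k$. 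The disjoint union $\bigsqcup_n\mathcal{B}_n$ decomposes as $\bigsqcup_n\{\{a\}:a\in X_n\}$ (in bijection with $\bigsqcup_n X_n$, a countable union of denumerable sets, hence cuf by $\mathbf{UT}(\aleph_0,\aleph_0,cuf)$) together with an $\omega\times\omega$-indexed countable family, so it is cuf. Since cuf sets are closed under finite Cartesian powers and countable unions, the set of finite partial functions $V$ with $\dom(V)\in[\omega]^{<\omega}$ and $V(n)\in\mathcal{B}_n$ for $n\in\dom(V)$ is cuf, and surjects onto a base of $\prod_{n\in\omega}X_n^{\ast}$ consisting of sets of the form $\bigcap_{n\in\dom(V)}\pi_n^{-1}(V(n))$. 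Hence $\prod_{n\in\omega}X_n^{\ast}$ is a $T_3$-space with a cuf base, and Theorem \ref{s1t15}(ii) delivers metrizability.

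For the third implication, let $\{Y_n:n\in\omega\}$ be a countable family of countable sets. Splitting off those $Y_n$ that are finite (whose union is a countable union of finite sets, hence trivially cuf) and reindexing the rest, I may assume each $Y_n$ is denumerable. By hypothesis $\prod_n Y_n^{\ast}$ is first-countable at $\vec{\infty}=(\infty_n)_{n\in\omega}$ with some countable neighbourhood base $\{V_k:k\in\omega\}$. For each $n,k\in\omega$, set $F_{n,k}=\{a\in Y_n:p_{n,a}\notin V_k\}$, where $p_{n,a}\in\prod_m Y_m^{\ast}$ is defined by $p_{n,a}(n)=a$ and $p_{n,a}(m)=\infty_m$ for $m\neq n$. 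Since $V_k$ contains a basic open neighbourhood $\prod_i U_i$ of $\vec{\infty}$ with $U_i=Y_i^{\ast}$ for almost all $i$, one finds $F_{n,k}\subseteq Y_n^{\ast}\setminus U_n$, which is finite. Conversely, $\pi_n^{-1}(Y_n^{\ast}\setminus\{a\})$ is an open neighbourhood of $\vec{\infty}$ containing some $V_k$, forcing $a\in F_{n,k}$. Therefore $\bigcup_n Y_n=\bigcup_{(n,k)\in\omega\times\omega}F_{n,k}$ exhibits the desired cuf decomposition.

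The hardest step is the first implication, where one must produce a cuf base of $\prod_n X_n^{\ast}$ in $\mathbf{ZF}$ without tacitly using a choice function over $n$ to enumerate each $X_n$ or each $\mathcal{B}_n$. The device is to exploit the single cuf decomposition of $\bigcup_n X_n$ to define, uniformly in $n$, the increasing sequence $(H_n^k)_k$ and hence the canonical base $\mathcal{B}_n$ simultaneously, thereby bypassing any $n$-indexed choice. Once this is in place, closure of cuf sets under finite Cartesian powers and countable unions delivers the cuf base, while the converse direction is a routine unpacking of the neighbourhood base at $\vec{\infty}$.
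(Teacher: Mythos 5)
The paper does not prove Theorem \ref{s1t18}; it is quoted as a known result with a citation to \cite{kw1}, so there is no in-text proof to compare against. Your argument is correct and self-contained, and it follows the natural route one would expect from \cite{kw1}: derive a single cuf decomposition of $\bigcup_n X_n$ from $\mathbf{UT}(\aleph_0,\aleph_0,cuf)$, use it to build the bases $\mathcal{B}_n$ uniformly in $n$, assemble a cuf base of the product, and invoke Theorem \ref{s1t15}(ii) (the product is $T_3$ since each factor is zero-dimensional Hausdorff and products of $T_3$-spaces are $T_3$); the converse via the sets $F_{n,k}$ read off from a countable neighbourhood base at $\vec{\infty}$ is exactly the standard extraction and is choice-free, since finiteness of $F_{n,k}$ is a property rather than data to be chosen. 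One phrasing caveat: the blanket assertion that ``cuf sets are closed under \dots countable unions'' is false in $\mathbf{ZF}$ (that closure is itself a choice principle); what saves your step is that every cuf decomposition in sight is induced by the one global decomposition of $\bigsqcup_n\mathcal{B}_n$, so the union $\bigcup_{F\in[\omega]^{<\omega}}\prod_{n\in F}\mathcal{B}_n$ you actually need is a union over $[\omega]^{<\omega}\times\omega$ of \emph{explicitly given} finite sets. You already flag this uniformity as the key device in your closing paragraph, but the justifying sentence should be restated so it does not appeal to a false general closure property.
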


\begin{theorem}
\label{s1t19} $(\mathbf{ZF})$ 
\begin{enumerate}
\item[(i)] (Cf. \cite{lo}.)  Let $\kappa $ be an infinite cardinal number of von Neumann, $\{\mathbf{X}_{i}:i\in \kappa \}$ be a family of
compact spaces, $\{f_{i}:i\in \kappa \}$ be a collection of functions such
that for every $i\in \kappa ,f_{i}$ is a Loeb function of $\mathbf{X}_{i}$. Then the Tychonoff product $%
\mathbf{X}=\prod\limits_{i\in \kappa }\mathbf{X}_{i}$ is compact. 

\item[(ii)] (Cf. \cite{kw}.) If $\mathbf{X}$ is a compact second-countable and metrizable
space, then $\mathbf{X}^{\omega }$ is compact and separable. In particular,
the Hilbert cube $[0,1]^{\mathbb{N}}$ is a compact, separable metrizable space.
\end{enumerate}
\end{theorem}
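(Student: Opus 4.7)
The plan is to prove (i) by a transfinite recursion along $\kappa$ that uses the Loeb functions $f_i$ in place of the coordinate-wise choices that fuel the usual Tychonoff argument, and then to read off (ii) from (i) together with an elementary construction of a countable dense subset of $\mathbf{X}^{\omega}$.

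For (i), I would work with the closed-set formulation of compactness: show that any family $\mathcal{A}$ of closed subsets of $\mathbf{X}=\prod_{i<\kappa}\mathbf{X}_i$ with the finite intersection property satisfies $\bigcap\mathcal{A}\neq\emptyset$. Define simultaneously, by transfinite recursion on $\alpha<\kappa$, a point $x(\alpha)\in X_\alpha$ and an enlargement
\[\mathcal{A}_{\alpha}=\mathcal{A}\cup\{\pi_{\beta}^{-1}(V):\beta<\alpha,\ V\in\tau_{\beta},\ x(\beta)\in V\},\]
maintaining inductively that $\mathcal{A}_{\alpha}$ has FIP. At stage $\alpha$ the family $\{\overline{\pi_\alpha(F)}:F\text{ a finite intersection from }\mathcal{A}_{\alpha}\}$ consists of closed subsets of the compact space $\mathbf{X}_\alpha$ and still has FIP, so its intersection $K_\alpha$ is non-empty, and I set $x(\alpha)=f_\alpha(K_\alpha)$. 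The step from $\alpha$ to $\alpha+1$ goes through because, for any finite intersection $F$ from $\mathcal{A}_\alpha$ and any open $W\ni x(\alpha)$ in $X_\alpha$, one has $x(\alpha)\in K_\alpha\subseteq\overline{\pi_\alpha(F)}$, so $\pi_\alpha(F)\cap W\neq\emptyset$ and hence $F\cap\pi_\alpha^{-1}(W)\neq\emptyset$; at limit stages one takes unions. Once the recursion is complete, the point $x=(x(\alpha))_{\alpha<\kappa}$ belongs to every $A\in\mathcal{A}$: given any basic open $U=\bigcap_{k\leq n}\pi_{i_k}^{-1}(V_k)$ around $x$, pick $\alpha>\max_k i_k$, so that $A$ and all $\pi_{i_k}^{-1}(V_k)$ already lie in $\mathcal{A}_\alpha$; then FIP of $\mathcal{A}_\alpha$ forces $A\cap U\neq\emptyset$, whence $x\in\overline{A}=A$.

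For (ii), Theorem \ref{s1t14}(a) tells us that the compact second-countable metrizable space $\mathbf{X}$ is Loeb and separable; fix a Loeb function $f$ for $\mathbf{X}$ and a countable dense $D\subseteq X$. Applying (i) with $\kappa=\omega$ and $f_i=f$ for every $i\in\omega$ yields compactness of $\mathbf{X}^{\omega}$. For separability, if $D$ is finite then Hausdorffness of $\mathbf{X}$ forces $X=D$ and the eventually-constant sequences already give a countable dense set. If $D=\{d_n:n\in\omega\}$ is denumerable, then $D^{*}=\{x\in X^{\omega}:(\exists n\in\omega)(\forall i\geq n)\ x(i)=d_{0}\}$ is in explicit bijection with $\omega^{<\omega}$, hence countable in $\mathbf{ZF}$, and is dense in $\mathbf{X}^{\omega}$. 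The Hilbert cube case follows at once, since $[0,1]$ is compact, second-countable and metrizable in $\mathbf{ZF}$, and Theorem \ref{s1t12} (with $\omega$ a cuf set) delivers the metrizability of $[0,1]^{\omega}$.

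The delicate point throughout is the bookkeeping in (i): one must not attempt to close $\mathcal{A}$ up to a single maximal FIP family at the start, which would demand a global choice, and one must not try to pick $x(\alpha)$ from $\overline{\pi_\alpha(A)}$ independently in each coordinate, which would fail to produce a coherent point. The correct move is to feed the already-chosen coordinates $x(\beta)$ back into $\mathcal{A}_{\alpha}$ as the open-set constraints $\pi_\beta^{-1}(V)$ \emph{before} applying the Loeb function $f_\alpha$; this is precisely where Loebness replaces the Axiom of Choice. All remaining ingredients—compactness of each $\mathbf{X}_\alpha$, preservation of FIP under closure, and the counting arguments in (ii)—are genuinely effective in $\mathbf{ZF}$.
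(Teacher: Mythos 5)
The paper states this result without proof, citing Loeb \cite{lo} for (i) and \cite{kw} for (ii), and your reconstruction of (i) is exactly Loeb's argument: transfinite recursion on the coordinates, using $f_\alpha$ to select a point of the closed set $K_\alpha=\bigcap_F\cl_{\mathbf{X}_\alpha}(\pi_\alpha[F])$ after feeding the previously chosen coordinates back into the family as open constraints. The recursion, the preservation of the finite intersection property at successor and limit stages, and the final density argument showing $x\in\cl_{\mathbf{X}}(A)=A$ are all correct and genuinely choice-free, and your closing remarks correctly identify where Loebness substitutes for $\mathbf{AC}$.

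One small slip in (ii): as written, $D^{*}=\{x\in X^{\omega}:(\exists n)(\forall i\geq n)\,x(i)=d_0\}$ leaves the initial segment $x\upharpoonright n$ ranging over all of $X$, so for uncountable $X$ this set is not countable and is certainly not in bijection with $\omega^{<\omega}$. You clearly intend $D^{*}=\{x\in D^{\omega}:(\exists n)(\forall i\geq n)\,x(i)=d_0\}$, which is in explicit bijection with the set of finite sequences from the enumerated set $D$ and hence countable in $\mathbf{ZF}$; density then follows by selecting, in each of the finitely many constrained coordinates, the element of $D$ of least index lying in the given open set. With that one-word correction the proof of (ii) is complete, and the Hilbert cube case follows as you say from Theorem \ref{s1t12}.
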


\section{Basic facts about forms concerning countable products of compact metrizable spaces}
\label{s2}
 
To prove, for instance, that a countable product of compact metrizable spaces is second-countable if and only if it is both metrizable and separable, we need the following proposition:

\begin{proposition}
\label{s2p1}
$(\mathbf{ZF})$ Let $\{\langle X_n, \tau_n\rangle: n\in\mathbb{N}\}$ be a family of topological spaces, let $X=\prod\limits_{n\in\mathbb{N}}X_n$ and $\mathbf{X}=\prod\limits_{n\in\mathbb{N}}\mathbf{X}_n$ where $\mathbf{X}_n=\langle X_n,\tau_n\rangle$ for every $n\in\mathbb{N}$. Then:
\begin{enumerate}
\item[(i)] if $\mathbf{X}$ is second-countable and $X\neq\emptyset$, then there exists a family $\{B_{n,m}: n,m\in\mathbb{N}\}$ such that, for every $n\in\mathbb{N}$, $\mathcal{B}_n=\{B_{n,m}: m\in\mathbb{N}\}$ is a countable base of $\mathbf{X}_n$;
\item[(ii)] if $\mathbf{X}$ is second-countable, $X\neq\emptyset$ and, for every $n\in\mathbb{N}$, $\mathbf{X}_n$ is a compact Hausdorff space, then there exist families $\{d_n: n\in\mathbb{N}\}$ and  $\{f_n: n\in\mathbb{N}\}$ such that, for every $n\in\mathbb{N}$, $f_n$ is a Loeb function of $\mathbf{X}_n$ and $d_n$ is a metric on $X_n$ such that $\tau(d_n)=\tau_n$.
\end{enumerate}
\end{proposition}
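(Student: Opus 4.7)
For part (i), my plan is to exploit that each projection $\pi_n\colon X\to X_n$ is a continuous surjective \emph{open} map. Openness follows in $\mathbf{ZF}$ from the usual description of the product topology because the image of a basic rectangular open set is the corresponding coordinate factor (using that every $X_k$ is non-empty, which is a consequence of $X\neq\emptyset$); surjectivity of $\pi_n$ is immediate from the same observation. Starting from any countable base $\mathcal{B}$ of $\mathbf{X}$, I enumerate $\mathcal{B}=\{B_m:m\in\mathbb{N}\}$ (allowing repetitions if $\mathcal{B}$ happens to be finite) and set $B_{n,m}=\pi_n(B_m)$. Each $B_{n,m}$ is open in $\mathbf{X}_n$, and to check that $\mathcal{B}_n=\{B_{n,m}:m\in\mathbb{N}\}$ is a base of $\mathbf{X}_n$ at an arbitrary point $x\in X_n$, given an open $U\ni x$, I choose any $y\in X$ with $\pi_n(y)=x$, pick $m\in\mathbb{N}$ with $y\in B_m\subseteq \pi_n^{-1}(U)$, and observe that $x\in B_{n,m}\subseteq U$.

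For part (ii), I would feed the uniformly indexed family of bases $\{\mathcal{B}_n:n\in\mathbb{N}\}$ produced in (i) into a canonical version of Urysohn's construction. Each $\mathbf{X}_n$ is compact Hausdorff, hence normal and $T_3$; the $\mathbf{ZF}$ proof of Urysohn's metrization theorem (Theorem \ref{s1t15}(i)), when applied to the \emph{indexed} countable base $\mathcal{B}_n$, returns a specific embedding $\phi_n\colon X_n\to [0,1]^{\mathbb{N}}$ into the Hilbert cube. Explicitly, for each pair $(k,\ell)$ with $\cl_{\mathbf{X}_n}(B_{n,k})\subseteq B_{n,\ell}$ one builds a Urysohn separating function using the canonical dyadic-rational recursion, picking intermediate open sets by the minimum-index rule from $\mathcal{B}_n$ (the existence of admissible basic sets is guaranteed by normality and compactness, which is why the choice can be canonical). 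Since the Hilbert cube $[0,1]^{\mathbb{N}}$ carries its canonical product metric as given by (\ref{0}), pulling that metric back through $\phi_n$ produces a metric $d_n$ on $X_n$ with $\tau(d_n)=\tau_n$. For the Loeb function $f_n$, given any non-empty closed $F\subseteq X_n$ the image $\phi_n(F)$ is a non-empty compact subset of $[0,1]^{\mathbb{N}}$, so I can define its lexicographic minimum $p_F$ coordinatewise by
\[
p_F(k)=\min\bigl\{q(k):q\in\phi_n(F)\text{ and }q(i)=p_F(i)\text{ for all }i<k\bigr\},
\]
where every minimum exists since at each stage we take the minimum of a non-empty compact subset of $[0,1]$, and the nested-intersection argument together with compactness of $\phi_n(F)$ forces $p_F\in\phi_n(F)$. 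Setting $f_n(F)=\phi_n^{-1}(p_F)$ gives the desired Loeb function of $\mathbf{X}_n$.

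The main obstacle is \emph{uniformity}: the family of metrics and Loeb functions must be defined by a single formula with parameter $n$, so every step from enumerating the bases of the factor spaces, through the Urysohn embeddings, to the selection inside $[0,1]^{\mathbb{N}}$, must avoid any hidden countable choice. This is precisely what (i) buys us by pinning down a coherent indexing of the bases $\mathcal{B}_n$ coming from the single enumeration of a base of $\mathbf{X}$; after that point the Urysohn embedding and the lexicographic-minimum selection are both canonical in their inputs, so the construction transfers uniformly across all $n\in\mathbb{N}$ without appeal to any fragment of $\mathbf{AC}$.
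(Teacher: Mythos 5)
Your part (i) is essentially the paper's own argument: project a fixed countable base of $\mathbf{X}$ along the (open, surjective) projections and verify the base property by modifying one coordinate of a point of the non-empty product. For part (ii) you take a genuinely different, and correct, route. The paper applies Urysohn's metrization theorem \emph{once}, to the whole product $\mathbf{X}$ (which is second-countable and $T_3$ because compact Hausdorff spaces are regular in $\mathbf{ZF}$), and then obtains each $d_n$ by restricting the resulting metric to the slice of $X$ through a fixed point $y$ in the $n$-th direction; the Loeb functions are then read off from the uniformly indexed bases as in \cite[proof of $(iv)\rightarrow(i)$ of Theorem 2.1]{kert}. You instead run a \emph{family} of canonical Urysohn embeddings $\phi_n\colon X_n\to[0,1]^{\mathbb{N}}$, one per factor, pull back the product metric, and select points of closed sets by a coordinatewise lexicographic minimum inside the Hilbert cube. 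Both are choice-free, but they distribute the burden differently: the paper's version needs the effectivity of Urysohn's construction only for a single space, whereas yours needs it uniformly in $n$, which is exactly the step you must (and do) justify by insisting that every interpolation in the dyadic recursion is made by a minimal-index rule over finite unions of the sets $B_{n,m}$ — legitimate here because compactness plus regularity guarantees that admissible finite unions exist, so "take the least one" is a definition rather than a choice. Your lexicographic-minimum Loeb function is likewise a valid canonical selection (each stage minimizes over a non-empty compact subset of $[0,1]$, and the finite intersection property keeps the limit point inside $\phi_n(F)$), and it is an attractive, self-contained alternative to citing \cite{kert}. In short: same idea for (i), a more hands-on but equally sound construction for (ii), at the cost of having to argue uniformity of the Urysohn recursion across the factors rather than invoking Theorem \ref{s1t15} a single time.
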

\begin{proof} (i) Let us assume that $\mathcal{B}=\{B_m: m\in\mathbb{N}\}$ is a countable base of $\mathbf{X}$ and $X\neq\emptyset$. We fix $y\in X$ and $k\in\mathbb{N}$. We define $\mathcal{B}_k=\{\pi_n[B_m]: m\in\mathbb{N}\}$ where $\pi_k: X\to X_k$ is the projection. Then $\mathcal{B}_m\subseteq \tau_k$.  To check that $\mathcal{B}_k$ is a base of $\mathbf{X}_k$, we take any non-empty $U\in\tau_k$ and $x\in U$. Let $z\in X$ be defined by: $z(k)=x$ and  $z(n)=y(n)$ if $n\in\mathbb{N}\setminus\{k\}$. Since $z\in\pi_k^{-1}[U]$, there exists $m\in\mathbb{N}$ such that $z\in B_m\subseteq\pi_k^{-1}(U)$. Then $x\in\pi_k[B_m]\subseteq U$.

(ii) Now, assume that, for every $n\in\mathbb{N}$, $\mathbf{X}_n$ is a compact Hausdorff space, $\mathbf{X}$ is second-countable and $X\neq\emptyset$. Since every compact Hausdorff space is regular and products of $T_3$-spaces are $T_3$-spaces, the space $\mathbf{X}$ is a $T_3$-space. Hence, by Theorem \ref{s1t15}, $\mathbf{X}$ is metrizable. Fixing a metric $d$ which induces the topology of $\mathbf{X}$, we can easily define a family $\{d_n: n\in\mathbb{N}\}$ such that, for every $n\in\mathbb{N}$, $d_n$ is a metric on $X_n$ such that $\tau(d_n)=\tau_n$. Furthermore, it follows from (i) that there exists a family $\{B_{n,m}: n,m\in\mathbb{N}\}$ such that, for every $n\in\mathbb{N}$,  $\mathcal{B}_n=\{B_{n,m}: m\in\mathbb{N}\}$ is a base of $\mathbf{X}_n$. Now, in much the same way, as in  \cite[proof of $(iv)\rightarrow(i) $ of Theorem 2.1]{kert}, we can effectively define a family $\{f_n: n\in\mathbb{N}\}$ such that, for every $n\in\mathbb{N}$, $f_n$ is a Loeb function of $\mathbf{X}_n$.
\end{proof}

\begin{theorem}
\label{s2t2}
$(\mathbf{ZF})$
\begin{enumerate}
\item[(i)] The following sentences are all equivalent:\newline
 $\mathbf{CPM}_{le}(C,2)$, $%
\mathbf{CPM}_{le}(C,MS)$, $\mathbf{CPM}_{le}(C,M2)$, $\mathbf{CPM}_{le}(C, C2)$,\newline
 $\mathbf{CSM}_{le}(C,2)$, $\mathbf{CSM}_{le}(C,MS)$, $\mathbf{CSM}_{le}(C,M2)$.

\item[(ii)] $\mathbf{CPM}_{le}(C,2)\rightarrow\mathbf{CPM}_{le}(C,S)$.

\item[(iii)] $\mathbf{CPM}_{le}(C,S)\leftrightarrow \mathbf{CSM}_{le}(C, S)$.

\item[(iv)]  $\mathbf{CPM}_{le}(C,M2)\rightarrow \mathbf{CPM}_{le}(C,MC)\rightarrow 
\mathbf{CPM}_{le}(C,C)$.

\item[(v)] $(\mathbf{CPM}_{le}(C,M)\wedge\mathbf{CPM}(C,C))\leftrightarrow \mathbf{CPM}_{le}(C, MC)$.

\item[(vi)]  $\mathbf{CPM}_{le}(C, MC)\rightarrow \mathbf{CPM}_{le}(C,S)$.

\item[(vii)]  $(\mathbf{CPM}_{le}(C, MC)\wedge\mathbf{M}(C,S))\leftrightarrow\mathbf{CPM}_{le}(C, MS)$.

\item[(viii)] The following sentences are all equivalent:\newline
$\mathbf{CPM}_{le}(CS,MS)$, $\mathbf{CPM}_{le}(CS,2)$, $\mathbf{CPM}%
_{le}(CS,MC)$,\newline $\mathbf{CPM}_{le}(CS,M2)$, $\mathbf{CPM}_{le}(C2,MS)$,
$\mathbf{CPM}_{le}(C2,MC)$,\newline $\mathbf{CPM}_{le}(C2,M2)$, $\mathbf{CSM}%
_{le}(CS,MS)$, $\mathbf{CSM}_{le}(CS,M2)$,\newline $\mathbf{CSM}_{le}(C2,MS)$, $\mathbf{CSM}_{le}(C2,M2)$.

\item[(ix)] Each of the forms listed in (i) implies each of the forms listed in (viii).
\end{enumerate}
\end{theorem}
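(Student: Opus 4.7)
My plan is to handle this nine-part statement with a single engine: Proposition \ref{s2p1} lets us extract uniformly indexed countable bases $\{B_{n,m}\}_{m\in\mathbb{N}}$ and Loeb functions $f_n$ on each factor from second-countability of a nonempty countable product of compact Hausdorff spaces. Combined with Urysohn's metrization (Theorem \ref{s1t15}(i)) and Loeb--Tychonoff (Theorem \ref{s1t19}(i)), this suffices to collapse each block of equivalent forms. The one standard trick I would use repeatedly is: by regularity, inside any non-empty open $U\subseteq X_n$ there is some $B_{n,m}$ with $\cl_{\mathbf{X}_n}(B_{n,m})\subseteq U$, so the uniformly indexed sets $D_n=\{f_n(\cl_{\mathbf{X}_n}(B_{n,m})): m\in\mathbb{N}\}$ are dense in $\mathbf{X}_n$. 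For part (i), I close the cycle $\mathbf{CPM}_{le}(C,2)\to\mathbf{CPM}_{le}(C,M2)\to\mathbf{CPM}_{le}(C,MS)\to\mathbf{CPM}_{le}(C,2)$: the first arrow is Urysohn on the regular product; the second builds a countable dense subset of the product from finite-support sequences drawn from the $D_n$'s and anchored at $f_n(X_n)$; the third is elementary. The equivalence with $\mathbf{CPM}_{le}(C,C2)$ adds compactness via Loeb--Tychonoff applied to the $f_n$; for the sum equivalences, a countable base of $\bigoplus\mathbf{X}_n$ restricts to uniformly indexed bases $\{B_m\cap X_n\}$ on each clopen factor, while uniformly indexed bases on the factors reassemble into a countable base of the sum.

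Parts (ii) and (iv) are immediate corollaries of (i) together with a trivial forgetful step for $MC\to C$; part (v) is a rephrasing via Theorem \ref{s1t11}(ii), which turns $\mathbf{CPM}_{le}(C,M)$ into uniform metric selection. For (iii), I project a countable dense subset of the product down to each factor via $\pi_n$ to get uniformly indexed $D_n$, then union them in the sum; conversely, a countable dense subset of the sum intersects each clopen factor in a uniformly indexed dense set, and anchored finite-support sequences reconstruct a dense subset of the product. Part (vi) is the most delicate step and I expect it to be the main obstacle: from $\mathbf{CPM}_{le}(C,MC)$ I would deduce $\mathbf{CPM}_{le}(C,C)$ and hence $\mathbf{CPM}(C,C)$, which by Theorem \ref{s1t14}(b) is equivalent to $\mathbf{M}(C,S)$; since the product is compact metrizable by hypothesis, $\mathbf{M}(C,S)$ then forces it to be separable. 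Part (vii) combines the same ideas, using Proposition \ref{s2p1}(ii) to recover compactness of products on the $\mathbf{CPM}_{le}(C,MS)$ side and a projection argument (via $\pi_0:\mathbf{X}^\omega\to\mathbf{X}$) to pass from $\mathbf{CPM}_{le}(C,MS)$ to $\mathbf{M}(C,S)$.

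For part (viii), Theorem \ref{s1t14}(a) identifies the CS and C2 hypotheses in ZF (for compact metrizable spaces, separable $\Leftrightarrow$ second-countable), so all forms with CS or C2 hypotheses quantify over the same class and the circle-of-implications pattern from (i) establishes their equivalence restricted to that class. Part (ix) is immediate since each form in (i) has the strictly weaker hypothesis ``$C$'' on the factors. Throughout, the uniformity issue is what makes all of this nontrivial in ZF: one cannot select dense sets, metrics, or Loeb functions factor-by-factor without choice, so every argument must route its selections through Proposition \ref{s2p1} (from second-countability of the product or sum) or Theorem \ref{s1t11}(ii) (from $\mathbf{CPM}_{le}(C,M)$), and compactness must always come via Loeb--Tychonoff rather than classical Tychonoff.
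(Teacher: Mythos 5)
Your overall strategy is essentially the paper's: the same engine (Proposition \ref{s2p1} plus Urysohn and Loeb--Tychonoff), the same cycle of implications for (i), the same dense-projection and anchored-finite-support constructions for (iii), and the same route $\mathbf{CPM}_{le}(C,MC)\rightarrow\mathbf{CPM}(C,C)\rightarrow\mathbf{M}(C,S)$ for (vi). Your treatment of (v) via Theorem \ref{s1t11}(ii) is in fact more explicit than the paper's ``obvious.''

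There is, however, one genuine gap: every bridge you build from a product form to a sum form passes through Proposition \ref{s2p1}, whose hypotheses include $X\neq\emptyset$, and in $\mathbf{ZF}$ a countable product of non-empty compact metrizable spaces can be empty (this is exactly the failure of $\mathbf{CAC}(C,\mathbf{M}_{le})$, which occurs e.g.\ in $\mathcal{N}_{\mathbf{2}^{\omega}}$). When $\prod_n X_n=\emptyset$, the hypotheses $\mathbf{CPM}_{le}(C,2)$ and $\mathbf{CPM}_{le}(C,S)$ say nothing about the family $\{\mathbf{X}_n\}$, so your derivations of $\mathbf{CSM}_{le}(C,2)$ in (i) and of $\mathbf{CSM}_{le}(C,S)$ in (iii) (``restrict a base of the product,'' ``project a dense set via $\pi_n$'') have nothing to restrict or project. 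The paper's fix is to pass to $\mathbf{Y}_n=\mathbf{X}_n\oplus\{\infty_n\}$: each $\mathbf{Y}_n$ is still compact metrizable, $\prod_n Y_n$ contains the point $(\infty_n)_{n}$ and is therefore non-empty, and uniformly indexed bases (resp.\ dense sets) for the $\mathbf{Y}_n$ restrict to such for the $\mathbf{X}_n$, whence the sum statements follow. Without some such device the product-to-sum directions do not go through. A secondary loose end: in (viii) your ``rerun the cycle from (i)'' covers the forms with conclusions $MS$, $2$, $M2$, but the forms with conclusion $MC$ (e.g.\ $\mathbf{CPM}_{le}(CS,MC)$) are not in the cycle of (i), so the implication from a compact-and-metrizable conclusion back to a second-countable one for the restricted class still needs the appeal to Theorem \ref{s1t14}(b) that the paper makes; you should say how that step is closed.
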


\begin{proof}
 Let $\{\mathbf{X}_n: n\in\omega\}$ be a family of non-empty compact metrizable spaces and let $\mathbf{X}=\prod\limits_{n\in\omega}\mathbf{X}_n$. Without loss of generality, we may assume that $X_n\cap X_m=\emptyset$ for each pair $n,m$ of distinct natural numbers. Let $(\infty_n)_{n\in\omega}$ be a sequence of elements such that, for each pair $n,m$ of distinct members of $\omega$, the sets $X_n\cup\{\infty_n\}$ and $X_m\cup\{\infty_m\}$ are disjoint and $\infty_n\notin X_n$. For $n\in\omega$, we put $Y_n=X_n\cup\{\infty_n\}$ and $\mathbf{Y}_n=\mathbf{X}_n\oplus\{\infty_n\}$ where $\{\infty_n\}$ is treated as the one-element discrete space. Let $\mathbf{Y}=\prod\limits_{n\in\omega}\mathbf{Y}_n$, $Z=\bigcup\limits_{n\in\omega}Y_n$ and  $\mathbf{Z}=\bigoplus\limits_{n\in\omega}\mathbf{Y}_n$. 
 
(i) If $\mathbf{X}$ is the empty space, it is separable, second-countable and metrizable. Suppose that $X\neq\emptyset$ and $\mathbf{X}$ is second-countable. It follows from Proposition \ref{s2p1}, taken together with Theorems \ref{s1t19}(i) and \ref{s1t12}, that $\mathbf{X}$ is a compact metrizable space. By Theorem \ref{s1t14}($a$), $\mathbf{X}$ is a Loeb, separable space.  All this taken together with the fact that every separable metrizable space is second-countable shows that  $\mathbf{CPM}_{le}(C,2)$, $\mathbf{CPM}_{le}(C,C2)$, $\mathbf{CPM}_{le}(C,MS)$ and $\mathbf{CPM}_{le}(C,M2)$ are all equivalent.

Now, let us notice that if there exists a family $\{B_{n,m}: n,m\in\omega\}$ such that, for every $n\in\omega$, $\{B_{n,m}: m\in\omega\}$ is a countable base of $\mathbf{Y}_n$, then both $\mathbf{Y}$ and $\mathbf{Z}$ are second-countable. Hence, it follows from Proposition \ref{s2p1} and properties of direct sums of topological spaces that if $\mathbf{Y}$ or $\mathbf{Z}$ is second-countable, then both $\mathbf{Y}$ and $\mathbf{Z}$ are second-countable. Clearly, $\mathbf{Z}$ is second-countable if and only if $\bigoplus\limits_{n\in\omega}\mathbf{X}_n$ is second-countable and, moreover, if $\mathbf{Y}$ is second-countable, then so is $\mathbf{X}$. In consequence, $\mathbf{CPM}_{le}(C, 2)\leftrightarrow\mathbf{CSM}_{le}(C, 2)$. Hence, by Proposition \ref{s2p1}(ii),  $\mathbf{CSM}_{le}(C,2)\leftrightarrow\mathbf{CSM}_{le}(C,M2)$. 

We notice that if $\bigoplus\limits_{n\in\omega}\mathbf{X}_n$ is both metrizable and second-countable, then, similarly to the proof of Proposition \ref{s2p1}, one can show that there exist families $\{B_{n,m}: n,m\in\omega\}$ and $\{f_n: n\in\omega\}$ such that, for every $n\in\omega$, $\{B_{n,m}: m\in\omega\}$ is a countable base of $\mathbf{X}_n$ and $f_n$ is a Loeb function of $\mathbf{X}_n$. Then the set $D=\{f_n(\cl_{\mathbf{X}_n}(B_{n,m})): n,m\in\omega\}$ is countable and dense in $\bigoplus_{n\in\omega}\mathbf{X}_n$. Hence $\mathbf{CSM}(C, M2)$ implies $\mathbf{CSM}_{le}(C, MS)$. 

To conclude the proof of (i), let us notice that if $\bigoplus\limits_{n\in\omega}\mathbf{X}_n$ is both metrizable and separable, $d$ is a metric which induces the topology of $\bigoplus\limits_{n\in\omega}\mathbf{X}_n$  and $A$ is a dense set in $\bigoplus\limits_{n\in\omega}\mathbf{X}_n$, then the family $\mathcal{B}=\{ \{y\in X_n: d(x,y)<\frac{1}{m+1}\}: x\in A\cap X_n\text{ and } n,m\in\omega\}$ is a countable base of $\bigoplus\limits_{n\in\omega}\mathbf{X}_n$. Hence $\mathbf{CSM}_{le}(C, MS)$ implies $\mathbf{CSM}_{le}(C, M2)$. \medskip

(ii) It follows from (i) that $\mathbf{CPM}_{le}(C,2)$ implies $\mathbf{CPM}_{le}(C,MS)$. Hence (ii) holds. \medskip

(iii) Suppose that $\mathbf{Y}$ is separable and $H=\{y_m: m\in\mathbb{N}\}$ is a countable dense set in $\mathbf{Y}$. Let $E=\{ y_m(n): n,m\in\omega\}$. Then $E$ is a countable dense subset of $\mathbf{Z}$. This shows that $\mathbf{CPM}_{le}(C, S)$ implies $\mathbf{CSM}_{le}(C, S)$. On the other hand, assuming that $G=\{x_m: m\in\mathbb{N}\}$ is a countable dense set of $\bigoplus_{n\in\omega}\mathbf{X}_n$, for every $n\in\omega$, we put $M(n)=\{m\in\omega: x_m\in X_n\}$, $m(n)=\min M(n)$ and $L_n=\{x_m: m\in M(n)\}$. For each $k\in\omega$. let $P_k$ be the set of all points $x$  of $X=\prod\limits_{n\in\omega}X_n$ such that: $x(n)\in L(n)$ for every $n\in k+1$, and $x(n)=x_{m(n)}$ for every $n\in\omega\setminus (k+1)$. Then the set $P=\bigcup_{k\in\omega}P_k$ is countable and dense in $\mathbf{X}$. Hence $\mathbf{CSM}_{le}(C, S)$ implies $\mathbf{CPM}_{le}(C,S)$.\medskip

(iv) That $\mathbf{CPM}_{le}(C, M2)$ implies $\mathbf{CPM}_{le}(C, MC)$ was shown in the proof of (i). It is trivial that $\mathbf{CPM}_{le}(C, MC)$ implies $\mathbf{CPM}_{le}(C,C)$. 

It is obvious that (v) holds.\medskip

(vi)  Assuming  $\mathbf{CPM}_{le}(C, MC)$, we show that the space $\mathbf{X}$ is separable.  By $\mathbf{CPM}_{le}(C, MC)$, the space $\mathbf{X}$ is compact and metrizable. Clearly, $\mathbf{CPM}_{le}(C, MC)$ implies $\mathbf{CPM}(C, MC)$. In the light of Theorem \ref{s1t14}($b$), $\mathbf{CPM}(C, MC)$ and $\mathbf{M}(C, S)$ are equivalent. Hence $\mathbf{X}$ is separable and, in consequence, (vi) holds.\medskip

That (vii) holds can be deduced from Theorem \ref{s1t14}($b$).

(viii) It is obvious that $\mathbf{CPM}_{le}(C,MS)$ implies $\mathbf{CPM}_{le}(CS,2)$. On the other hand,  $\mathbf{CPM}_{le}(CS,2)$ implies $\mathbf{CPM}_{le}(CS, M2)$ by Theorem \ref{s1t15}. It follows from the proof of (i) and from Theorem \ref{s1t14} ($b$) that (viii) also holds. Now, it is obvious that (ix) is true.
\end{proof}

\begin{corollary}
\label{s2c3}
$(\mathbf{ZF})$ For every family $\{\mathbf{X}: n\in\omega\}$ of compact metrizable spaces, it holds that $\mathbf{X}=\prod_{n\in\omega}\mathbf{X}_n$ is second-countable if and only if $\mathbf{X}$ is both metrizable and separable. 
\end{corollary}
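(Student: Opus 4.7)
The plan is to extract the corollary directly from Theorem \ref{s2t2}(i) and the supporting lemmas, handling the degenerate case separately and packaging both directions cleanly.

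First, I would dispose of the trivial case $X=\emptyset$: the empty space is vacuously second-countable, metrizable, and separable, so the equivalence holds.

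For the substantive direction, assume $X \neq \emptyset$ and $\mathbf{X}$ is second-countable. I would apply Proposition \ref{s2p1}(ii) to obtain simultaneous families $\{d_n : n \in \omega\}$ of metrics with $\tau(d_n) = \tau_n$ and $\{f_n : n \in \omega\}$ of Loeb functions for the $\mathbf{X}_n$'s. Then Theorem \ref{s1t19}(i), applied to the well-ordered index set $\omega$ with the chosen Loeb functions, gives that $\mathbf{X}$ is compact. Since $\omega$ is a cuf set (indeed, countable), Theorem \ref{s1t12} applied to the chosen metrics $d_n$ gives that $\mathbf{X}$ is metrizable. Finally, a compact metrizable second-countable space is separable by Theorem \ref{s1t14}(a).

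For the converse, assume $\mathbf{X}$ is metrizable with a compatible metric $d$ and separable with a countable dense set $A = \{a_m : m \in \omega\}$. I would verify, by the standard $\mathbf{ZF}$-argument, that the family
\[
\mathcal{B} = \{ B_d(a_m, 1/(k+1)) : m \in \omega,\ k \in \omega\}
\]
is a countable base of $\mathbf{X}$: given any open $U$ and $x \in U$, choose $k$ with $B_d(x, 2/(k+1)) \subseteq U$ and then $m$ with $d(a_m, x) < 1/(k+1)$, so that $x \in B_d(a_m, 1/(k+1)) \subseteq B_d(x, 2/(k+1)) \subseteq U$.

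I expect no serious obstacle here, since the forward direction is essentially a repackaging of what was done inside the proof of Theorem \ref{s2t2}(i) (with the crucial feature being that Proposition \ref{s2p1}(ii) furnishes the families of metrics and Loeb functions by a single effective construction from a fixed base of $\mathbf{X}$, so no choice principle is needed). The only care required is to invoke Theorem \ref{s1t19}(i) and Theorem \ref{s1t12} with these explicitly chosen families rather than with hypothetical ones, which is what makes the argument go through in $\mathbf{ZF}$.
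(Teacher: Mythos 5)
Your proposal is correct and follows essentially the same route as the paper: the paper proves the corollary by citing the arguments in the proof of Theorem \ref{s2t2}(i), which are precisely the steps you spell out (Proposition \ref{s2p1}(ii) to get the families of metrics and Loeb functions, Theorem \ref{s1t19}(i) for compactness, Theorem \ref{s1t12} for metrizability, Theorem \ref{s1t14}(a) for separability, and the standard fact that separable metrizable spaces are second-countable for the converse). Your explicit verification of that last standard fact and your emphasis on invoking the product theorems with the effectively constructed families are exactly the right points of care.
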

\begin{proof}
This follows from the arguments given in the proof of (i) of Theorem \ref{s2t2}.
\end{proof}

Since every second-countable compact Hausdorff space is metrizable by Theorem \ref{s1t15}, we can write down the following corollary to Theorem \ref{s2t2}(i):

\begin{corollary}
\label{s2c13}
$(\mathbf{ZF})$  $\mathbf{CPM}_{le}(C,2)$ implies that every countable product of compact Hausdorff second-countable spaces is a metrizable, compact second-countable space. 
\end{corollary}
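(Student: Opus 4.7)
The plan is to reduce the hypothesis ``compact Hausdorff and second-countable'' to ``compact and metrizable'' for each factor, after which Theorem \ref{s2t2}(i) does all the work. Specifically, let $\{\mathbf{X}_n:n\in\omega\}$ be a countable family of compact Hausdorff second-countable spaces and let $\mathbf{X}=\prod_{n\in\omega}\mathbf{X}_n$. Since every compact Hausdorff space is regular, each $\mathbf{X}_n$ is a second-countable $T_3$-space, so Urysohn's Metrization Theorem (Theorem \ref{s1t15}(i)) gives that each $\mathbf{X}_n$ is metrizable.

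Having recognized $\{\mathbf{X}_n:n\in\omega\}$ as a countable family of compact metrizable spaces, I would now invoke $\mathbf{CPM}_{le}(C,2)$. By Theorem \ref{s2t2}(i), this form is equivalent to both $\mathbf{CPM}_{le}(C,M2)$ and $\mathbf{CPM}_{le}(C,C2)$; in particular, $\mathbf{X}$ is metrizable, compact, and second-countable, which is precisely the conclusion.

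There is no real obstacle here: the step of passing from ``compact Hausdorff and second-countable'' to ``compact and metrizable'' uses only Urysohn's Metrization Theorem (available in $\mathbf{ZF}$ via Theorem \ref{s1t15}(i)), and everything else is a direct application of the equivalences already established in Theorem \ref{s2t2}(i). Thus the argument would occupy only a few lines, essentially citing Theorem \ref{s1t15}(i) and Theorem \ref{s2t2}(i) in sequence.
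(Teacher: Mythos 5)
Your argument is correct and is exactly the paper's: the paper derives this corollary by noting that every second-countable compact Hausdorff space is metrizable by Urysohn's Metrization Theorem (Theorem \ref{s1t15}(i)) and then invoking the equivalences of Theorem \ref{s2t2}(i), in particular $\mathbf{CPM}_{le}(C,2)\leftrightarrow\mathbf{CPM}_{le}(C,M2)\leftrightarrow\mathbf{CPM}_{le}(C,C2)$. Nothing is missing.
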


The following theorem shows that the implication from (ii) of Theorem \ref{s2t2} is reversible in $\mathbf{ZF}+\mathbf{CAC}(\mathbb{R})$.

\begin{theorem}
\label{s2t5}
$(\mathbf{ZF})$
$\mathbf{CAC}(\mathbb{R})$ implies that  $\mathbf{CPM}_{le}(C,S)$ and $\mathbf{CPM}%
_{le}(C,2)$ are equivalent.
\end{theorem}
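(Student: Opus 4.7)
Plan: The implication $\mathbf{CPM}_{le}(C,2)\to\mathbf{CPM}_{le}(C,S)$ is Theorem~\ref{s2t2}(ii), which requires no extra hypothesis, so the task is to prove the reverse implication under $\mathbf{CAC}(\mathbb{R})$. Given a family $\{\mathbf{X}_n:n\in\omega\}$ of compact metrizable spaces with product $\mathbf{X}$, my strategy is to use the separability assumption to fix a countable dense subset of $\mathbf{X}$ and then exploit $\mathbf{CAC}(\mathbb{R})$ to select a compatible metric on each factor in a uniform way; this will make $\mathbf{X}$ metrizable and therefore, together with the assumed separability, second-countable.

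Concretely, if $X=\emptyset$ the conclusion is trivial, so I may assume $X\neq\emptyset$; then $\mathbf{CPM}_{le}(C,S)$ supplies an enumeration $H=\{y_m:m\in\omega\}$ of a countable dense subset of $\mathbf{X}$, and for each $n$ the set $D_n=\{y_m(n):m\in\omega\}$ is dense in $\mathbf{X}_n$ because every nonempty open $U\subseteq X_n$ has nonempty preimage $\pi_n^{-1}[U]$, which must meet $H$. Let $\mathcal{M}_n$ be the set of metrics on $X_n$ inducing $\tau_n$; it is nonempty since $\mathbf{X}_n$ is metrizable. Every $d\in\mathcal{M}_n$ is continuous on $\mathbf{X}_n\times\mathbf{X}_n$ via the estimate $|d(x,y)-d(x',y')|\leq d(x,x')+d(y,y')$, so $d$ is determined by its values on the dense set $D_n\times D_n$; hence the map $\Phi_n\colon\mathcal{M}_n\to\mathbb{R}^{\omega\times\omega}$ defined by $\Phi_n(d)(i,j)=d(y_i(n),y_j(n))$ is injective. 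Composing with a fixed $\mathbf{ZF}$-bijection $\beta\colon\mathbb{R}^{\omega\times\omega}\to\mathbb{R}$ (whose existence follows from $|\mathbb{R}|=|2^\omega|$) yields a uniformly defined denumerable family of nonempty subsets $\Psi_n[\mathcal{M}_n]\subseteq\mathbb{R}$, where $\Psi_n=\beta\circ\Phi_n$.

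Applying $\mathbf{CAC}(\mathbb{R})$ to $\{\Psi_n[\mathcal{M}_n]:n\in\omega\}$ then returns a sequence $(d_n)_{n\in\omega}$ of compatible metrics on the $\mathbf{X}_n$, and formula~(\ref{0}) supplies a compatible metric on $\mathbf{X}$, showing that $\mathbf{X}$ is metrizable; together with the assumed separability this forces second-countability (balls of rational radii about a dense sequence form a countable base), which is $\mathbf{CPM}_{le}(C,2)$. The main conceptual obstacle is the passage from ``$\mathcal{M}_n\neq\emptyset$ for each $n$'' to a simultaneous choice, and this is overcome precisely by encoding metrics as reals via the fixed enumeration $H$ and the bijection $\beta$; once the family of sets of metrics is presented as a denumerable family of subsets of $\mathbb{R}$, $\mathbf{CAC}(\mathbb{R})$ applies directly and the remaining steps are routine.
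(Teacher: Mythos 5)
Your proposal is correct and follows essentially the same route as the paper: both proofs use the dense sets $D_n=\{y_m(n):m\in\omega\}$, observe that a compatible metric on $X_n$ is determined by its (continuous) restriction to $D_n\times D_n$, thereby inject the set of compatible metrics into $\mathbb{R}$, apply $\mathbf{CAC}(\mathbb{R})$ to the resulting family of nonempty subsets of $\mathbb{R}$, and conclude metrizability of the product via formula~(\ref{0}), with Theorem~\ref{s2t2}(ii) handling the converse implication. The only (cosmetic) difference is that you encode the metrics themselves injectively into $\mathbb{R}^{\omega\times\omega}$ via the fixed enumeration, whereas the paper works with the set of restrictions to $D_n\times D_n$ and recovers the metric by unique continuous extension.
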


\begin{proof}
Let us assume both $\mathbf{CAC}(\mathbb{R})$ and $\mathbf{CPM}_{le}(C,S)$.
Fix a family $\{ \mathbf{X}_{n}:n\in \omega\}$ of compact metrizable spaces and let $\mathbf{X}=\prod\limits_{n\in \omega%
}\mathbf{X}_{n}$. Let $\mathbf{X}_n=\langle X_n, \tau_n\rangle$ for $n\in\omega$. We prove that $\mathbf{X}$ is second-countable. Since the empty space is second-countable, we may assume that $X\neq\emptyset$. By our hypothesis, $\mathbf{X}$ is separable. Fix a dense subset $%
D=\{x_{i}:i\in \omega\}$ of $\mathbf{X}$. Clearly, for every $n\in 
\omega$, the set $D_{n}=\{x_{i}(n):i\in \omega\}$ is dense in $\mathbf{X}_{n}$.
For every $n\in \omega$, let $F_{n}$ be the set of all functions $d\in \mathbb{R}^{D_{n}\times
D_{n}}$ such that there exists a metric $\rho$ on $X_n$ such that $\tau(\rho)=\tau_n$ and $d(x,y)=\rho(x,y)$ for all $x,y\in D_n$. Since there exists a family $\{f_n: n\in\omega\}$ of injections $f_n: D_n\times D_n\to\omega\times\omega$ and  $|\mathbb{R}^{D_{n}\times
D_{n}}|=|\mathbb{R}^{\omega \times \omega }|=|\mathbb{R}^{\omega }|=|\mathbb{%
R}|$, there exists also a family $\{g_n: n\in\omega\}$ of injections $g_n: F_n\to\mathbb{R}$. By $\mathbf{CAC}(\mathbb{R})$, we can fix $\psi\in\prod\limits_{n\in\omega}g_n[F_n]$.  For every $n\in\omega$, let $\sigma_n=g_n^{-1}(\psi(n))$.  Since $\sigma_n\in F_n$ and $%
D_{n}\times D_{n}$ is dense in $\mathbf{X}_{n}\times \mathbf{X}_{n}$, there exists a unique metric $d_{n}$ on $X_n$ such that $\tau(d_n)=\tau_n$ and $\sigma_n(x,y)=d_n(x,y)$ for all $x,y\in D_n$. Therefore, $\mathbf{X}$ is
metrizable and, in consequence, second-countable. This, together with Theorem \ref{s2t2}(ii), completes the proof.
\end{proof}

From the proof of Theorem \ref{s2t5}, we deduce the following corollary:

\begin{corollary}
\label{s2c6}
Let $\mathcal{M}$ be any model of $\mathbf{ZF}+\mathbf{CAC}(\mathbb{R})$. In $\mathcal{M}$, let $\{\mathbf{X}_n: n\in\omega\}$ be a family of compact metrizable spaces. Then it holds in $\mathcal{M}$ that $\mathbf{X}=\prod_{n\in\omega}\mathbf{X}_n$ is second-countable if and only if $\mathbf{X}$ is separable. 
\end{corollary}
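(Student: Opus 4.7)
The plan is to observe that Corollary \ref{s2c6} is essentially a localization of Theorem \ref{s2t5}: the proof of that theorem does not really need the global hypothesis $\mathbf{CPM}_{le}(C,S)$; it only uses the separability of the one particular product under consideration. So I would split the statement into its two implications and dispatch each one separately.

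For the direction ``second-countable $\Rightarrow$ separable'', I would simply invoke Corollary \ref{s2c3}, which holds in $\mathbf{ZF}$ and asserts that, for any countable product $\mathbf{X}=\prod_{n\in\omega}\mathbf{X}_n$ of compact metrizable spaces, second-countability of $\mathbf{X}$ is equivalent to $\mathbf{X}$ being both metrizable and separable. In particular, second-countability implies separability without any appeal to $\mathbf{CAC}(\mathbb{R})$.

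For the direction ``separable $\Rightarrow$ second-countable'', I would replay the argument of Theorem \ref{s2t5} verbatim for the given family. Assume $X\neq\emptyset$ (the empty case is trivial) and fix a countable dense $D=\{x_i:i\in\omega\}$ in $\mathbf{X}$, so that $D_n=\{x_i(n):i\in\omega\}$ is countable and dense in $\mathbf{X}_n$ for each $n$. For every $n$ let
\[
F_n=\{d\in\mathbb{R}^{D_n\times D_n}:\text{there exists a metric }\rho\text{ on }X_n\text{ with }\tau(\rho)=\tau_n\text{ and }\rho\restriction D_n\times D_n=d\}.
\]
Using the usual cardinality computation $|\mathbb{R}^{D_n\times D_n}|\leq |\mathbb{R}|$ one produces in $\mathbf{ZF}$ a sequence of injections $g_n\colon F_n\hookrightarrow\mathbb{R}$, and then $\mathbf{CAC}(\mathbb{R})$ applied to $\{g_n[F_n]:n\in\omega\}$ selects a compatible restricted metric for each $n$. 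By density of $D_n\times D_n$ in $\mathbf{X}_n\times\mathbf{X}_n$, each such restriction extends uniquely to a metric $d_n$ on $X_n$ with $\tau(d_n)=\tau_n$. The resulting sequence $\{d_n:n\in\omega\}$ produces, via formula (\ref{0}), a metric inducing the product topology on $\mathbf{X}$, so $\mathbf{X}$ is metrizable, and combining with the assumed separability gives second-countability.

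No real obstacle is expected, since both halves are already embedded in earlier results; the only point worth mentioning is that the construction in Theorem \ref{s2t5} treats each factor independently, so it transfers without change from the universal statement to the single-family statement of the corollary.
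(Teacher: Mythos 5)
Your proposal is correct and matches the paper's intent exactly: the paper derives Corollary \ref{s2c6} precisely by noting that the argument of Theorem \ref{s2t5} is local to the given family (the hypothesis $\mathbf{CPM}_{le}(C,S)$ is only used to get separability of the one product, which here is assumed directly), and the converse direction is the $\mathbf{ZF}$ content of Corollary \ref{s2c3}. Nothing further is needed.
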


In view of Theorem \ref{s2t2}, one may ask the following questions:

\begin{question}
\label{s2q10}
\begin{enumerate}
\item[(i)] Does $\mathbf{CPM}_{le}\mathbf{(}C,M)$ imply $\mathbf{CPM}_{le}(C,S)$ in $\mathbf{ZF}$?
\item[(ii)]  Does $\mathbf{CPM}_{le}(C,C)$ imply $\mathbf{CPM}_{le}(C,M)$ in $\mathbf{ZF}$?
\item[(iii)] Does $\mathbf{CPM}_{le}(CS,MS)$ imply $\mathbf{M}(C,S)$ in $\mathbf{ZF}$?
\end{enumerate}
\end{question}

\begin{remark}
\label{s2r11}
With regard to Question \ref{s2q10}(i), we notice
 that $\mathbf{CPM}_{le}\mathbf{(}C,M)$ implies neither $%
\mathbf{CPM(}C,C)$  nor $\mathbf{CPM}_{le}\mathbf{(}C,S)$ in $\mathbf{ZFA}$. Indeed, by Theorem \ref
{s1t14}($b$), it holds in $\mathbf{ZF}$ that $\mathbf{CPM}(C,S)$ and $\mathbf{CPM}(C,C)$ are equivalent. This equivalence is also valid in $\mathbf{ZFA}$. Moreover, since Theorem \ref{s1t14} ($b$)--($c$) is valid in $\mathbf{ZFA}$, the following conjunction is true in $\mathbf{ZFA}$:
$$(\mathbf{CMC}\rightarrow \mathbf{CPM}_{le}(C,M))\wedge (\mathbf{CPM}(C,C)\rightarrow \mathbf{CAC}_{fin}).$$
Hence, in every model of $\mathbf{ZFA}$ satisfying $\mathbf{CMC}$ and
the negation of $\mathbf{CAC}_{fin}$, for example, in the Second Fraenkel Model $\mathcal{N}$2 of \cite{hr}, $\mathbf{CPM}_{le}(C,M)$ is true but both $\mathbf{CPM}(C,C)$ and $\mathbf{CPM}_{le}(C,S)$ are false.
\end{remark}

Even though we do not know the full answer to Question \ref{s2q10} (ii), we give the
following partial answer.

\begin{theorem}
\label{s2t12}
$(\mathbf{ZF})$
\begin{enumerate}
\item[(i)] $\mathbf{CPM}_{le}(C,C)\rightarrow\mathbf{CAC}(C, \mathbf{M}_{le})\rightarrow\mathbf{M}(C,S)$.
\item[(ii)] $(\mathbf{CMC}(\leq 2^{\aleph_0})\wedge\mathbf{CPM}_{le}(C,C))\rightarrow\mathbf{CPM}_{le}(C,2M)$. 
\end{enumerate}
\end{theorem}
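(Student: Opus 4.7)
The plan for part (i) is to split it into the two stated implications. For $\mathbf{CPM}_{le}(C,C)\to\mathbf{CAC}(C,\mathbf{M}_{le})$, I would borrow the one-point extension from the proof of Theorem \ref{s2t2}: given a family $\{\mathbf{X}_n:n\in\mathbb{N}\}$ of non-empty compact metrizable spaces, set $\mathbf{Y}_n=\mathbf{X}_n\oplus\{\infty_n\}$ and $\mathbf{Y}=\prod_{n\in\mathbb{N}}\mathbf{Y}_n$. Each $\mathbf{Y}_n$ is compact metrizable, so $\mathbf{Y}$ is compact by hypothesis. The clopen sets $F_n=\pi_n^{-1}[X_n]\subseteq Y$ have the finite intersection property: for any finite $I\subseteq\mathbb{N}$ one exhibits a point of $\bigcap_{i\in I}F_i$ by choosing (via finite induction in ZF) some $y(i)\in X_i$ for $i\in I$ and setting $y(n)=\infty_n$ for $n\notin I$. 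Compactness of $\mathbf{Y}$ then produces a point $y\in\bigcap_n F_n$, and $n\mapsto y(n)$ is a choice function for $\{X_n\}$. The second implication $\mathbf{CAC}(C,\mathbf{M}_{le})\to\mathbf{M}(C,S)$ is immediate: $\mathbf{CAC}(C,\mathbf{M}_{le})$ trivially entails $\mathbf{CAC}(C,M)$ (every metric space is metrizable), and then Theorem \ref{s1t14}(b) supplies $\mathbf{M}(C,S)$.

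For (ii), assume $\mathbf{CMC}(\leq 2^{\aleph_0})$ and $\mathbf{CPM}_{le}(C,C)$, and let $\{\mathbf{X}_n:n\in\omega\}$ be a family of compact metrizable spaces with product $\mathbf{X}$. By part (i), $\mathbf{M}(C,S)$ holds, so each $\mathbf{X}_n$ is separable and, by Theorem \ref{s1t14}(a), second-countable. For each $n$, let $M_n$ be the set of metrics on $X_n$ inducing $\tau_n$. Using that $\mathbf{X}_n$ admits some countable dense subset $D_n$, the restriction map $d\mapsto d|_{D_n\times D_n}$ is an injection $M_n\to\mathbb{R}^{D_n\times D_n}$ (by continuity of any such $d$ on the Hausdorff space $\mathbf{X}_n\times\mathbf{X}_n$), so $|M_n|\leq|\mathbb{R}^{\omega\times\omega}|=2^{\aleph_0}$. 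Apply $\mathbf{CMC}(\leq 2^{\aleph_0})$ to the family $\{M_n:n\in\omega\}$ to get a multiple choice function $\psi$, and put $d_n=\sum_{e\in\psi(n)}e$, a finite sum (hence definable in ZF) which is again a metric on $X_n$ compatible with $\tau_n$, since finite sums of compatible metrics remain compatible. Now Theorem \ref{s1t12} gives that $\mathbf{X}$ is metrizable, $\mathbf{CPM}_{le}(C,C)$ gives that $\mathbf{X}$ is compact, and the compact metrizable $\mathbf{X}$ is separable by $\mathbf{M}(C,S)$, hence second-countable by Theorem \ref{s1t14}(a).

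The main subtleties are twofold. For (i), one has to notice that compactness of the product does not yet yield non-emptiness (the empty space is compact), which is why the one-point extension and the FIP argument are needed. For (ii), the key point is the cardinality estimate $|M_n|\leq 2^{\aleph_0}$: we only need the \emph{existence} of a countable dense $D_n$ for each $n$ (no uniform selection of the $D_n$ is required to establish a ZF-provable cardinal inequality), so there is no concealed application of choice before the single invocation of $\mathbf{CMC}(\leq 2^{\aleph_0})$.
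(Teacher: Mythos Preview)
Your proposal is correct and follows essentially the same route as the paper. For (i) you spell out the Kelley-style one-point-extension/FIP argument that the paper only alludes to via the reference to \cite{kl}, and for (ii) you reproduce the paper's strategy (separability of each $\mathbf{X}_n$ via $\mathbf{M}(C,S)$, the cardinality bound $|M_n|\le 2^{\aleph_0}$ from density plus continuity, one application of $\mathbf{CMC}(\le 2^{\aleph_0})$, then metrizability from Theorem~\ref{s1t12} and second-countability from compactness plus $\mathbf{M}(C,S)$); the only cosmetic difference is that the paper combines the finitely many selected metrics by $\max$ rather than your finite sum, which is immaterial.
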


\begin{proof}
 That the first implication of (i) (``$\mathbf{CPM}_{le}(C,C)\rightarrow\mathbf{CAC}(C,\mathbf{M}_{le})$'') is true can be shown similarly to the proof in \cite{kl} that the Tychonoff Product Theorem implies $\mathbf{AC}$. That the second implication of (i) ( ``$\mathbf{CAC}(C,\mathbf{M}_{le})\rightarrow\mathbf{M}(C,S)$'') is true follows from Theorem \ref{s1t14} ($b$).\medskip
 
 (ii) Assuming $\mathbf{CMC}(\leq 2^{\aleph_0})$ and $\mathbf{CPM}_{le}(C,C)$, we prove $\mathbf{CPM}_{le}(C,2M)$. To this aim, we fix a family  $\{\mathbf{X}_{n}: n\in\omega\}$ of compact metrizable spaces $\mathbf{X}_n=\langle X_n, \tau_n\rangle$, and put $\mathbf{X}=\prod\limits_{n\in \omega}\mathbf{X}_{n}$.  We show that $\mathbf{X}$ is separable and metrizable, hence second-countable also. We may assume that $X=\prod_{n\in\omega}X_n\neq\emptyset$. By (i), $\mathbf{CAC}(C,\mathbf{M}_{le})$ and $\mathbf{M}(C,S)$ hold. Thus,
for every $n\in \omega$, $\mathbf{X}_{n}$ is separable. For every $n\in\omega$, let 
\[
F_{n}=\{d\in \mathbb{R}^{X_{n}\times X_{n}}:d\text{ is a metric on }X_{n}%
\text{ such that }\tau_{n}=\tau(d_n)\}.
\]%
For a given $n\in\omega$, let us prove that the non-empty set $F_{n}$ is equipotent to a subset of $\mathbb{R}$. Since $\mathbf{X}_{n}$ is separable, the product $\mathbf{X}_{n}\times \mathbf{X}_{n}$ is also separable. Let $G$ be a countable dense set in $\mathbf{X}_{n}\times \mathbf{X}_{n}$. Since the
members of $F_{n}$ are continuous functions, their values are determined on $%
G$, i.e., if $d_{1},d_{2}\in F_{n}$ and the restrictions $d_{1}\upharpoonright G$ and $%
d_{2}\upharpoonright G$ are equal, then $d_{1}=d_{2}$. Therefore, $F_n$ is equipotent to a subset of $\mathbb{R}^{G}$. However, $\mathbb{R}^{G}$ and $\mathbb{R}$ are equipotent. This is why $F_n$ is equipotent to a subset of $\mathbb{R}$.

Fix, by $\mathbf{CMC}(\leq 2^{\aleph_0})$, a multiple choice function $f$ of $\{F_{n}:n\in \omega\}$. For every $n\in \omega$, we
define a function $d_{n}:X_n\times X_n\to\mathbb{R}$ by requiring: 
\[
d_{n}(x,y)=\max\{d(x,y): d\in f(n)\}\text{ where } x,y\in X_n.
\]%
It is straightforward to verify that for every $n\in \omega$, $d_{n}$ is
a metric on $X_{n}$ such that $\tau_n=\tau(d_n)$. It follows from Theorem \ref{s1t12} that $\mathbf{X}$ is metrizable. By $\mathbf{CPM}_{le}(C,C)$, the space $\mathbf{X}$ is compact. Hence,
by $\mathbf{M}(C,S)$, $\mathbf{X}$ is separable. This completes the proof of (ii) because every separable metrizable space is second-countable.\medskip
\end{proof}

Theorem \ref{s1t14} ($a$)-($b$) is a characterization of $\mathbf{M}(C,S)$. The following question pops up at this point:

\begin{question}
\label{s2q14}
Are the sentences $\mathbf{M}(C,S)$, $\mathbf{CAC}(C, \mathbf{M}_{le})$, $\mathbf{CPM}_{le}(C,S)$ and $\mathbf{CPM}_{le}(C,C)$ equivalent in $\mathbf{ZF}$?
\end{question}

We do not know the complete answer to this question, but, in the forthcoming Theorem \ref{s2t15}, we show that the answer is in the affirmative in $\mathbf{ZF}+\mathbf{CSM}_{le}(C,M)$.

\begin{theorem}
\label{s2t15}
$(\mathbf{ZF})$ $\mathbf{CSM}_{le}(C,M)$ implies that the following sentences are all equivalent: $\mathbf{CPM}_{le}(C,S)$, $\mathbf{CPM}_{le}(C,C)$, $\mathbf{CAC}(C,\mathbf{M}_{le})$ and $\mathbf{M}(C,S)$.
\end{theorem}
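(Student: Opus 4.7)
The plan is to close a cycle of implications under the standing assumption $\mathbf{CSM}_{le}(C,M)$. Two of the needed arrows are available in ZF straight from Theorem~\ref{s2t12}(i), namely $\mathbf{CPM}_{le}(C,C) \to \mathbf{CAC}(C,\mathbf{M}_{le}) \to \mathbf{M}(C,S)$. It therefore suffices to establish, using $\mathbf{CSM}_{le}(C,M)$, the chain $\mathbf{M}(C,S) \to \mathbf{CPM}_{le}(C,C) \to \mathbf{CPM}_{le}(C,S) \to \mathbf{M}(C,S)$.

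The key leverage provided by $\mathbf{CSM}_{le}(C,M)$ is Theorem~\ref{s1t11}(ii): for any denumerable family $\{\mathbf{X}_n : n\in\mathbb{N}\}$ of compact metrizable spaces, we can simultaneously pick metrics $\{d_n : n\in\mathbb{N}\}$ with $\tau(d_n)=\tau_n$. This upgrades a ``metrizable'' family into a ``metric'' family, after which the ZF-equivalences $\mathbf{M}(C,S)\leftrightarrow \mathbf{CPM}(C,C)\leftrightarrow \mathbf{CPM}(C,S)$ from Theorem~\ref{s1t14}(b) become available.

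For $\mathbf{M}(C,S) \to \mathbf{CPM}_{le}(C,C)$, given $\{\mathbf{X}_n\}$ compact metrizable I would first invoke Theorem~\ref{s1t11}(ii) to choose compatible metrics $d_n$; then $\{\langle X_n,d_n\rangle\}$ is a family of compact metric spaces, and $\mathbf{M}(C,S)$ gives $\mathbf{CPM}(C,C)$ via Theorem~\ref{s1t14}(b), whence $\prod\mathbf{X}_n$ is compact. For $\mathbf{CPM}_{le}(C,C) \to \mathbf{CPM}_{le}(C,S)$, I would again choose metrics $d_n$, use the product metric of equation~(\ref{0}) to witness metrizability of $\prod\mathbf{X}_n$, use $\mathbf{CPM}_{le}(C,C)$ to obtain compactness, and then apply $\mathbf{M}(C,S)$ (which is a consequence of $\mathbf{CPM}_{le}(C,C)$ by Theorem~\ref{s2t12}(i)) to obtain separability of $\prod\mathbf{X}_n$. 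The final arrow $\mathbf{CPM}_{le}(C,S) \to \mathbf{M}(C,S)$ holds already in ZF: every compact metric space is compact metrizable, so $\mathbf{CPM}_{le}(C,S)\to \mathbf{CPM}(C,S)\leftrightarrow\mathbf{M}(C,S)$ by Theorem~\ref{s1t14}(b).

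I do not expect any genuine obstacle. The whole proof is a matter of using $\mathbf{CSM}_{le}(C,M)$ as a metrization oracle to reduce the ``metrizable'' statements $\mathbf{CPM}_{le}(C,\cdot)$ to their ``metric'' counterparts $\mathbf{CPM}(C,\cdot)$ and then invoking the already established ZF-equivalences collected in Theorem~\ref{s1t14}(b) and Theorem~\ref{s2t12}(i).
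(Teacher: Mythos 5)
Your proof is correct, and it rests on the same engine as the paper's: use $\mathbf{CSM}_{le}(C,M)$ together with Theorem~\ref{s1t11}(ii) as a ``metrization oracle'' to replace a family of compact metrizable spaces by a family of compact metric spaces, and then invoke the $\mathbf{ZF}$-equivalences of Theorem~\ref{s1t14}(b) and the chain $\mathbf{CPM}_{le}(C,C)\rightarrow\mathbf{CAC}(C,\mathbf{M}_{le})\rightarrow\mathbf{M}(C,S)$ of Theorem~\ref{s2t12}(i). The decomposition of the cycle is different, though. The paper closes the loop via $\mathbf{CPM}_{le}(C,S)\rightarrow\mathbf{CPM}_{le}(C,MS)\rightarrow\mathbf{CPM}_{le}(C,C)$, which leans on the battery of equivalences in Theorem~\ref{s2t2}(i) to pass from $MS$ to compactness of the product; you avoid Theorem~\ref{s2t2} entirely by observing that $\mathbf{CPM}_{le}(C,S)\rightarrow\mathbf{CPM}(C,S)\leftrightarrow\mathbf{M}(C,S)$ holds outright in $\mathbf{ZF}$ (every instance of the metric statement is an instance of the metrizable one), and you instead route the hypothesis-dependent work through $\mathbf{M}(C,S)\rightarrow\mathbf{CPM}_{le}(C,C)\rightarrow\mathbf{CPM}_{le}(C,S)$. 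Your arrangement is marginally more self-contained and makes explicit which arrows genuinely need $\mathbf{CSM}_{le}(C,M)$ and which are free in $\mathbf{ZF}$; the paper's arrangement is shorter on the page because it delegates to results already proved. Both are valid; there is no gap in your argument.
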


\begin{proof}
Assume  $\mathbf{CSM}_{le}(C,M)$. Then, by Theorems \ref{s1t11}(ii), \ref{s2t2}(i) and \ref{s2t12}(i), the following implications are true:
 $$\mathbf{CPM}_{le}(C,S)\rightarrow\mathbf{CPM}_{le}(C,MS)\rightarrow\mathbf{CPM}_{le}(C,C).$$
Furthermore, by Theorem \ref{s1t14}($b$), $\mathbf{M}(C,S)$ implies $\mathbf{CPM}_{le}(C,S)$. This, together whith Theorem \ref{s2t12}(i), completes the proof.
\end{proof}

\begin{definition}
\label{s2d16}
For an infinite set $A$ and an element $\infty\notin A$, let $A(\infty)=A\cup\{\infty\}$, 
$$\tau=\mathcal{P}(A)\cup\{ A(\infty)\setminus F: F\in[A]^{<\omega}\}$$
and $\mathbf{A}(\infty)=\langle A(\infty), \tau\rangle$.
\end{definition}

We notice that, for every infinite set $A$, the space $\mathbf{A}(\infty)$ is the unique (up to equivalence) one-point Hausdorff compactification of the discrete space $\langle A, \mathcal{P}(A)\rangle$.

\begin{theorem}
\label{s2t17} 
$(\mathbf{ZF})$
\begin{enumerate}
\item[(i)] $\mathbf{CPM}_{le}(C,S)\rightarrow\mathbf{CUC}$.

\item[(ii)] $\mathbf{CPM}_{le}(CS,M)\rightarrow \mathbf{UT}(\aleph_0, \aleph_0, cuf)\rightarrow \mathbf{CMC}_{\omega}$. 

\item[(iii)]   $(\mathbf{CPM}_{le}(CS,M)\wedge\mathbf{CUC}_{fin})\rightarrow \mathbf{CUC}$.

\item[(iv)] $\mathbf{CPM}_{le}(CS,MS)\rightarrow\mathbf{CUC}$.
\end{enumerate}
\end{theorem}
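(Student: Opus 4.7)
All four implications will be driven by the one-point compactification construction $\mathbf{A}(\infty)$ from Definition \ref{s2d16}. For any countable set $A$, the space $\mathbf{A}(\infty)$ is compact Hausdorff, and it has a countable base consisting of the singletons $\{a\}$ for $a\in A$ together with the cofinite neighbourhoods of $\infty$ indexed by $[A]^{<\omega}$. Hence by Theorem \ref{s1t15}(i) the space $\mathbf{A}(\infty)$ is metrizable (and it is separable since it is countable). The decisive feature is that each $\{a\}$ with $a\in A$ is open in $\mathbf{A}(\infty)$, so any dense subset of $\mathbf{A}(\infty)$ must contain the whole of $A$. None of these observations needs choice.

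\textbf{Parts (i) and (iv).} Given a countable family of countable sets $\{A_n: n\in\omega\}$, I form $\mathbf{X}_n=\mathbf{A}_n(\infty)$ and set $\mathbf{X}=\prod_n\mathbf{X}_n$. Applying $\mathbf{CPM}_{le}(C,S)$ for (i), or $\mathbf{CPM}_{le}(CS,MS)$ for (iv) (legal because each $\mathbf{X}_n$ is itself compact, separable, and metrizable), I obtain a countable dense set $D=\{x_k:k\in\omega\}$ in $\mathbf{X}$. For every $n$, $\pi_n[D]$ is dense in $\mathbf{X}_n$, so by the observation above $A_n\subseteq\pi_n[D]$. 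The rule $\phi_n(a)=\min\{k:x_k(n)=a\}$ then defines a uniform sequence of injections $\phi_n:A_n\to\omega$, and the map $(n,a)\mapsto(n,\phi_n(a))$ injects the disjoint union $\bigsqcup_n A_n$ into $\omega\times\omega$, witnessing that $\bigcup_n A_n$ is countable.

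\textbf{Part (ii).} The first implication follows at once from Theorem \ref{s1t18}, which characterizes $\mathbf{UT}(\aleph_0,\aleph_0,cuf)$ as the metrizability of every countable product of one-point Hausdorff compactifications of denumerable discrete spaces; each such compactification is a compact, separable, metrizable space, so $\mathbf{CPM}_{le}(CS,M)$ yields the conclusion directly. For the second implication, let $\{A_n:n\in\omega\}$ be a denumerable family of denumerable sets. By $\mathbf{UT}(\aleph_0,\aleph_0,cuf)$ one writes $\bigcup_n A_n=\bigcup_m F_m$ with each $F_m$ finite. Setting $m(n)=\min\{m: A_n\cap F_m\neq\emptyset\}$, the assignment $n\mapsto A_n\cap F_{m(n)}$ is a multiple choice function of $\{A_n:n\in\omega\}$.

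\textbf{Part (iii) and the main obstacle.} This drops out by combining (ii) with the hypothesis: $\mathbf{CPM}_{le}(CS,M)$ yields $\mathbf{UT}(\aleph_0,\aleph_0,cuf)$, so every countable union of countable sets is a cuf set, and $\mathbf{CUC}_{fin}$ (equivalent to $\mathbf{CAC}_{fin}$ by Remark \ref{s1r5}(i)(c)) then collapses every cuf set to a countable set. The only real subtlety I expect is in (i) and (iv): I must verify that the countable base of $\mathbf{A}_n(\infty)$ is available in $\mathbf{ZF}$ without an a priori uniform enumeration of the $A_n$. This base is intrinsic to the topology on $A_n\cup\{\infty\}$, so its existence is a pure existence statement that does not invoke choice, which is exactly what keeps the whole argument inside $\mathbf{ZF}$ (and lets finite $A_n$, where $\mathbf{A}_n(\infty)$ degenerates to a finite discrete space, be handled by the same density argument).
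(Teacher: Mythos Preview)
Your argument is correct. Parts (i) and (iii) match the paper essentially verbatim. Your treatment of (ii) and (iv), however, differs from the paper's in interesting ways.

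For the first implication of (ii), the paper does not go through Theorem~\ref{s1t18}. Instead it fixes a disjoint family $\{J_n:n\in\mathbb{N}\}$ of non-empty countable sets, lets $\mathbf{X}_n=[0,1]^{J_n}$ (each a compact, separable, metrizable space, being homeomorphic to a closed subspace of the Hilbert cube), applies $\mathbf{CPM}_{le}(CS,M)$ to conclude that $\prod_n\mathbf{X}_n\cong[0,1]^{J}$ is metrizable where $J=\bigcup_n J_n$, and then invokes Theorem~\ref{s1t13} to deduce that $J$ is a cuf set. Your route via Theorem~\ref{s1t18} is equally valid and keeps the whole proof within the one-point-compactification idiom; the paper's route instead exploits the cube-metrizability characterization. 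For the second implication of (ii), the paper merely cites Theorems~\ref{s1t18} and~\ref{s1t16}; your direct elementary argument (pick $m(n)=\min\{m:A_n\cap F_m\neq\emptyset\}$) is a clean self-contained alternative.

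For (iv), the paper proceeds indirectly: from $\mathbf{CPM}_{le}(CS,MS)$ it passes via Theorem~\ref{s2t2}(viii) to $\mathbf{CSM}_{le}(CS,MS)$, reads off $\mathbf{CAC}_{fin}$ (equivalently $\mathbf{CUC}_{fin}$), and then appeals to (iii). Your approach---simply rerun the density argument from (i), noting that each $\mathbf{A}_n(\infty)$ is already compact, separable, and metrizable so that $\mathbf{CPM}_{le}(CS,MS)$ applies directly---is shorter and avoids the detour through countable sums.
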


\begin{proof}
(i) Assuming $\mathbf{CPM}_{le}(C,S)$, we fix a disjoint family $\mathcal{A}=\{A_{n}:n\in\omega\}$ 
of countable sets. Take any element $\infty\notin\bigcup\mathcal{A}$. For every $n\in \omega$, let $X_n=A_n\cup\{\infty\}$. If $A_n$ is finite, let $\mathbf{X}_{n}=\langle X_n, \mathcal{P}(X_n)\rangle$. If $A_n$ is infinite, let $\mathbf{X}_n=\mathbf{A}_n(\infty)$ (see Definition \ref{s2d16}). Since, for every $n\in\omega$, $\mathbf{X}_n$ is compact and metrizable, it follows from $\mathbf{CPM}_{le}(C,S)$ that the product $\mathbf{X}=\prod\limits_{n\in \omega}\mathbf{X}_{n}$ is
separable. Fix a countable dense subset $D=\{y_n: n\in\omega\}$ of $\mathbf{X}$. For every $n\in \omega$ we define a  function $%
f_{n}:A_{n}\to \omega$ as follows: for every $a\in A_{n}$,%
\[
f_{n}(a)=\min \{i\in \omega: y_{i}\in \pi _{n}^{-1}(a)\}
\]%
\noindent where $\pi_n:\mathbf{X}\to \mathbf{X}_n$ is the projection. Since, for every pair $a,b$ of distinct elements of $A_n$, $\pi _{n}^{-1}(a)\cap \pi
_{n}^{-1}(b)=\emptyset$, it follows that $f_{n}(a)\neq f_{n}(b)$. Therefore, for every $n\in\omega$, the function $f_{n}$ is an injection. We define a function $h:\bigcup \mathcal{A}%
\rightarrow \omega\times \omega$ as follows:  for every $a\in\bigcup\mathcal{A}$, $%
h(a)=\langle n_{a},f_{n_{a}}(a)\rangle$ where $n_{a}$ is the unique $n\in \omega$
with $a\in A_{n}.$ Since $h$ is an injection, it follows that $\bigcup \mathcal{A}$ is equipotent to a subset of $\omega\times\omega$,  Hence, $\bigcup \mathcal{A}$ is countable as required.\medskip

(ii) We assume $\mathbf{CPM}_{le}(CS,M)$ and prove $\mathbf{UT}(\aleph_0, \aleph_0, cuf)$. To this aim, we fix a disjoint family $\mathcal{J}=\{J_{n}:n\in \mathbb{N}\}$ of non-empty countable sets. For every $n\in \mathbb{N}$, let $\mathbf{%
X}_{n}$ denote the Hilbert cube $[0,1]^{J_{n}}$. Then, for every $n\in\mathbb{N}$, $\mathbf{X}_n$ is homeomorphic with a closed subspace of the Hilbert cube $[0,1]^{\mathbb{N}}$, so $\mathbf{X}_n$ is a compact, separable metrizable space.  By our hypothesis, the space $\mathbf{X}=\prod\limits_{n\in\mathbb{N}}\mathbf{X}_n$ is metrizable. Let $J=\bigcup\limits_{n\in\mathbb{N}}J_n$. Since $\mathbf{X}$ is homeomorphic with the Tychonoff cube $[0,1]^J$, it follows that $[0,1]^J$ is metrizable. By Theorem \ref{s1t13}, $J$ is a cuf set. This completes the proof of the first implication of (ii). The second implication of (ii) follows from Theorems \ref{s1t18} and \ref{s1t16}. \medskip

(iii) That (iii) holds is a simple consequence of the first implication of (ii). \medskip

(iv) Assume $\mathbf{CPM}_{le}(CS,MS)$. Then $\mathbf{CSM}_{le}(CS,MS)$ holds by Theorem \ref{s2t2}(viii). This trivially implies $\mathbf{CAC}_{fin}$. It has been mentioned in Remark \ref{s1r5}(i) that  $\mathbf{CUC}_{fin}$ and $\mathbf{CAC}_{fin}$ are equivalent. Hence $\mathbf{CUC}_{fin}$ holds. Clearly, $\mathbf{CPM}_{le}(CS, M)$ also holds. This, together with (iii), implies (iv) 
\end{proof}

\begin{theorem}
\label{s2t18}
$(\mathbf{ZF})$ $\mathbf{CUC}$ is equivalent to each of the following (i)--(iii):
\begin{enumerate}
\item[(i)]  all countable products of countable topological spaces are separable;
\item[(ii)] every countable sum of compact countable metrizable spaces is second-countable;
\item[(iii)] every countable product of compact countable metrizable spaces is compact and second-countable.
\end{enumerate}
\end{theorem}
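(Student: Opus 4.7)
The plan is to prove the three equivalences $\mathbf{CUC} \Leftrightarrow$ (i), $\mathbf{CUC} \Leftrightarrow$ (ii), and $\mathbf{CUC} \Leftrightarrow$ (iii) separately.

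For the backward implications I will adapt the one-point compactification construction from Theorem \ref{s2t17}(i). Given a disjoint countable family $\{A_n\}_{n \in \omega}$ of countable sets, I will set $X_n = \mathbf{A}_n(\infty)$ when $A_n$ is infinite and $X_n = A_n \cup \{\infty\}$ with the discrete topology when $A_n$ is finite; each $X_n$ is then a compact countable metrizable space in which every element of $A_n$ is isolated. For (i) $\Rightarrow \mathbf{CUC}$ and (iii) $\Rightarrow \mathbf{CUC}$, second-countability of $\prod_n X_n$ in (iii) yields separability, and the argument of Theorem \ref{s2t17}(i) applied to a countable dense subset of $\prod_n X_n$ will produce an injection $\bigcup_n A_n \hookrightarrow \omega \times \omega$. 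For (ii) $\Rightarrow \mathbf{CUC}$, I will use the observation that any countable base $\mathcal{B}$ of $\bigoplus_n X_n$ must contain $\{a\}$ for every $a \in \bigcup_n A_n$ (since $\{a\}$ is open with $a$ its only element), so $a \mapsto \{a\}$ injects $\bigcup_n A_n$ into $\mathcal{B}$.

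For the forward implications I will assume $\mathbf{CUC}$, so that for any relevant countable family $\{X_n\}_{n \in \omega}$ the set $\bigcup_n X_n$ is countable; I will fix an injection $e : \bigcup_n X_n \to \omega$ and set $e_n = e|_{X_n}$. For $\mathbf{CUC} \Rightarrow$ (i), with $x^*(n) = e_n^{-1}(\min e_n[X_n])$, the set $D = \{y \in \prod_n X_n : y(n) = x^*(n) \text{ for all but finitely many } n\}$ will be dense in $\prod_n X_n$ and will decompose as $D = \bigcup_{F \in [\omega]^{<\omega}} D_F$ with each $D_F$ in bijection with the finite product $\prod_{n \in F} X_n$, hence countable, so $D$ will be countable by $\mathbf{CUC}$. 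For the compactness half of $\mathbf{CUC} \Rightarrow$ (iii), each $X_n$ will carry the Loeb function $g_n(C) = e_n^{-1}(\min e_n[C])$ defined uniformly from $e_n$, and Theorem \ref{s1t19}(i) will deliver compactness of $\prod_n X_n$.

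The main technical work is to establish second-countability of $\bigoplus_n X_n$ for (ii) and of $\prod_n X_n$ for (iii) under $\mathbf{CUC}$. My plan is to construct, from the enumerations $e_n$ and the Loeb functions $g_n$, a uniform family $\{\mathcal{B}_n\}_{n \in \omega}$ of countable bases, one for each $X_n$. With such a family, $\bigcup_n \mathcal{B}_n$ will be a countable base of $\bigoplus_n X_n$ (countable by $\mathbf{CUC}$), and the collection of finite-intersection cylinder sets $\bigcap_{n \in F} \pi_n^{-1}(B_n)$ with $F \in [\omega]^{<\omega}$ and $B_n \in \mathcal{B}_n$ will form a countable base of $\prod_n X_n$ (again countable by $\mathbf{CUC}$). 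The principal obstacle I expect is the uniform construction of the $\mathcal{B}_n$; my intended approach is to exploit the zero-dimensionality of countable regular spaces together with compactness, describing a canonical countable base consisting of clopen subsets of $X_n$ that can be enumerated uniformly from $e_n$ and $g_n$.
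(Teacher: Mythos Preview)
Your proposal is correct and matches the paper's proof closely on the backward implications and on $\mathbf{CUC}\Rightarrow$(i). The one substantive difference lies in the forward direction for (ii) and (iii). The paper obtains the uniform family $\{\mathcal{B}_n\}_{n\in\omega}$ of countable bases by observing that each compact countable metrizable $\mathbf{X}_n$ is \emph{scattered} and then invoking the construction from \cite[proof of Theorem~8]{kft}; from those bases it extracts a family of metrics (as in Proposition~\ref{s2p1}), establishes metrizability of the sum and the product, and finally appeals to the argument of Theorem~\ref{s2t2} to get compactness and second-countability of the product. Your route via zero-dimensionality is more direct and self-contained: the family $\mathcal{B}_n=\{C\subseteq X_n: C\text{ clopen}\}$ is uniformly definable from the topologies alone, is a base because a countable metric space has empty spheres at all but countably many radii, and is countable because $\mathbf{X}_n$ is compact and second-countable (every clopen set is a finite union from any fixed countable base). $\mathbf{CUC}$ then makes $\bigcup_n\mathcal{B}_n$ countable with no need for uniform enumerations, giving (ii) and the second-countability half of (iii) immediately; your Loeb-function argument via Theorem~\ref{s1t19}(i) handles compactness without passing through metrizability. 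Both approaches work, but yours avoids the external reference and the detour through metrics.
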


\begin{proof}
 To show that $\mathbf{CUC}$ implies (i)--(iii), we assume $\mathbf{CUC}$ and fix a family $\{\mathbf{X}_{n}:n\in \omega\}$ of
countable topological spaces $\mathbf{X}_n=\langle X_n, \tau_n\rangle$. Without loss of generality, we may assume that $X_n\cap X_m=\emptyset$ for all $m,n\in\omega$ with $m\neq n$. Let $X=\prod\limits_{n\in\omega}X_{n}$ and $\mathbf{X}=\prod\limits_{n\in\omega}\mathbf{X}_n$. \medskip

($\mathbf{CUC}\rightarrow (i)$) If $X=\emptyset$, the space $\mathbf{X}$ is separable. Suppose that $X\neq\emptyset$ and fix $x_0\in X$. For every $n\in\omega$, let $E_n=\{ x\in X: (\forall i\in\omega\setminus (n+1)) x(i)=x_0(i)\}$.  By $\mathbf{CUC}$, the set $E=\bigcup\limits_{n\in\omega}E_n$ is countable. Clearly, $E$ is dense in $\mathbf{X}$. This shows that $\mathbf{CUC}$ implies (i)\medskip

($\mathbf{CUC}\rightarrow ((ii)\wedge(iii))$)  Let $\mathbf{Y}=\bigoplus\limits_{n\in\omega}\mathbf{X}_n$ and assume that, for every $n\in\omega$, the space $\mathbf{X}_n$ is compact, countable and metrizable. Then, for every $n\in\omega$, the space $\mathbf{X}_n$ is scattered, that is, every subspace of $\mathbf{X}_n$ has an isolated point. Mimicking  the arguments given in \cite[proof of Theorem 8]{kft}, one can show that it follows from $\mathbf{CUC}$ that there exists a family $\{\mathcal{B}_n: n\in\omega\}$ such that, for every $n\in\omega$, $\mathcal{B}_n$ is a countable base of $\mathbf{X}_n$. By $\mathbf{CUC}$, the family $\mathcal{B}=\bigcup\limits_{n\in\omega}\mathcal{B}_n$ is countable, hence $\mathbf{Y}$ is second-countable. Moreover, there exists a family $\{B_{n,m}: n,m\in\omega\}$ such that, for every $n\in\omega$, the family $\{B_{n,m}: m\in\omega\}$ is a base of $\mathbf{X}_n$. In much the same way, as in the proof of Proposition \ref{s2p1}, we can show that there is a collection $\{d_n: n\in\omega\}$ such that, for every $n\in\omega$, $d_n$ is a metric on $X_n$ such that $\tau(d_n)=\tau_n$. This implies that both $\mathbf{Y}$ and $\mathbf{X}$ are metrizable. It follows from the proof of Theorem \ref{s2t2} that $\mathbf{X}$ is also compact and second-countable. Hence $\mathbf{CUC}$ implies (ii) and (iii).\medskip

Now, to show that each of (i)--(iii) implies $\mathbf{CUC}$, we fix a disjoint family $\mathcal{A}=\{A_n: n\in\omega\}$ of non-empty countable sets and put $\mathbf{A}_n=\langle A_n, \mathcal{P}(A_n)\rangle$. 

($(i)\rightarrow\mathbf{CUC}$) Assume that (i) holds. Take an element $\infty\notin\bigcup\mathcal{A}$. For every $n\in\omega$,  put $H_n=A_n\cup\{\infty\}$ and $\mathbf{H}_n=\langle H_n, \mathcal{P}(H_n)\rangle$. By our assumption, the product $\mathbf{H}=\prod\limits_{n\in\omega}\mathbf{H}_n$ is separable. Let $D=\{a_i: i\in\omega\}$ be a countable dense set in $\mathbf{H}$. Then, for every $n\in\omega$, $H_n=\{a_i(n): i\in\omega\}$, which implies that $\bigcup\mathcal{A}$ is countable. Hence (i) implies $\mathbf{CUC}$.\medskip

($((ii)\vee(iii))\rightarrow\mathbf{CUC}$) Let $(\infty_n)_{n\in\omega}$ be a sequence of pairwise distinct elements such that none of $\infty_n$ is a member of $\bigcup\mathcal{A}$. If $A_n$ is finite, we put $\mathbf{Z_n}=\mathbf{A}_n$ and $Z_n=A_n$. If $A_n$ is infinite, let $Z_n=A_n\cup\{\infty_n\}$ and let $\mathbf{Z_n}$ be the one-point Hausdorff compactification of $\mathbf{A}_n$ such that $\infty_n$ is the unique accumulation point of $\mathbf{Z}_n$.  Let $\mathbf{Z}=\bigoplus\limits_{n\in\omega}\mathbf{Z}_n$. Assuming (ii), we can fix a countable base $\mathcal{B}$ of $\mathbf{Z}$. Then $\bigcup\limits_{n\in\omega}\{\{a\}: a\in A_n\}\subseteq\mathcal{B}$. This implies that $\bigcup\mathcal{A}$ is countable. Hence (ii) implies $\mathbf{CUC}$. To show that (iii) implies $\mathbf{CUC}$, without loss of generality, we may assume that $\prod\limits_{n\in\omega}A_n\neq\emptyset$. Assuming (iii), we deduce that the space $\mathbf{Z}^{\ast}=\prod\limits_{n\in\omega}\mathbf{Z}_n$ is second-countable. We fix a countable base $\mathcal{B}^{\ast}$ of $\mathbf{Z}^{\ast}$ and a point $c\in\prod\limits_{n\in\omega}A_n$. For every $n\in\omega$, we put $A_n(c)=\{x\in\prod_{i\in\omega}A_i: (\forall i\in\omega\setminus\{n\})x(i)=c(i)\}$,   $\mathcal{B}^{\ast}_n(c)=\{U\cap A_n(c): U\in\mathcal{B}^{\ast}\}$ and $\mathcal{B}^{\ast}(c)=\bigcup\limits_{n\in\omega}\mathcal{B}^{\ast}_n(c)$. Then the family $\mathcal{B}^{\ast}(c)$ is countable; thus, since $\bigcup\limits_{n\in\omega}\{\{z\}: z\in A_n(c)\}\subseteq\mathcal{B}^{\ast}(c)$, the set $\bigcup\limits_{n\in\omega}\{\{z\}: z\in A_n(c)\}$ is countable. This implies that $\bigcup\mathcal{A}$ is countable. Therefore, (iii) implies $\mathbf{CUC}$. 

\end{proof}

\begin{proposition}
\label{s2t19}$(\mathbf{ZF})$
\begin{enumerate}
\item[(i)] $\mathbf{CAC}_{\omega}$ is equivalent to the sentence: For every disjoint family $\{A_{i}:i\in 
\mathbb{N}\}$ of denumerable sets, there exists a denumerable set $H$ such
that, for every $i\in \mathbb{N}$, the set $A_{i}\cap H$ is infinite.

\item[(ii)] $\mathbf{CAC}(\leq 2^{\aleph_0})$ is equivalent to the sentence: For every family $\{\langle X_n, \tau_n\rangle: n\in\mathbb{N}\}$  of non-empty metrizable spaces such that, for each $n\in\mathbb{N}$, $X_n$ is equipotent to a subset of $\mathbb{R}$, the family $\{X_n: n\in\mathbb{N}\}$ has choice function.
\end{enumerate}
\end{proposition}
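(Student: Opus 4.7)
The plan is to prove the two parts independently, each biconditional treated in turn.

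For part (i), the substantive direction is ($\Rightarrow$). Given a disjoint denumerable family $\{A_i:i\in\mathbb{N}\}$ of denumerable sets, I would form, for each $(i,n)\in\mathbb{N}\times\mathbb{N}$ with $n\geq 1$, the set $E_{i,n}$ of all injections $\{1,\dots,n\}\to A_i$. Denumerability of $A_i$ gives $|E_{i,n}|=\aleph_0$ (via transfer along any bijection $A_i\to\mathbb{N}$), so $\mathbf{CAC}_{\omega}$ applies to the denumerable indexed family $\{E_{i,n}:(i,n)\in\mathbb{N}^2,\,n\geq 1\}$ and yields $g_{i,n}\in E_{i,n}$. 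Set $H=\bigcup_{(i,n)}\ran(g_{i,n})$. Since the $A_i$ are pairwise disjoint and $\ran(g_{i,n})\subseteq A_i$, one has $A_i\cap H\supseteq\bigcup_n\ran(g_{i,n})$, which is infinite because it contains $n$-element sets for every $n$. Moreover $H$ is the range of the explicitly defined map $(i,n,k)\mapsto g_{i,n}(k)$ on the well-orderable countable domain $\{(i,n,k):(i,n)\in\mathbb{N}^2,\,1\leq k\leq n\}$, so $H$ is well-orderable and countable, hence denumerable.

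The reverse direction of (i) is routine: given a denumerable family $\{B_i\}$ of denumerable sets, replace it by $A_i=B_i\times\{i\}$ to enforce disjointness, apply the hypothesis to obtain $H$, fix a bijection $\varphi:\mathbb{N}\to H$, and let $f(i)$ be the first coordinate of $\varphi(\min\{n:\varphi(n)\in A_i\})$, which defines a choice function for $\{B_i\}$.

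For part (ii), ($\Rightarrow$) is immediate since a family of non-empty metrizable spaces of size $\leq 2^{\aleph_0}$ is, in particular, a family of non-empty sets of size $\leq 2^{\aleph_0}$. For the converse, endow each $A_n$ with the discrete metric $d(x,y)=0$ if $x=y$, $d(x,y)=1$ otherwise; this is a uniformly definable metrizable topology that preserves cardinality, so the hypothesis yields a choice function.

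I anticipate no serious obstacle. The only point requiring care is in part (i) ($\Rightarrow$), where one must ensure $H$ is genuinely countable rather than merely a countable union of finite sets (a distinction that matters in $\mathbf{ZF}$ without $\mathbf{CAC}_{fin}$); this is secured by the fact that the chosen $g_{i,n}$ come with canonical enumerations of their finite ranges, producing a well-ordering of $H$ directly.
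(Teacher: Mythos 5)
Your proof is correct and follows essentially the same route as the paper's: for (i)($\Rightarrow$) you use the family of sets of injections of each finite length into each $A_i$, apply $\mathbf{CAC}_{\omega}$, and take the union of the ranges, exactly as the paper does with its sets $B_{i,n}$; for (ii) you use the discrete topology/metric, as the paper does. Your added care about disjointification in (i)($\Leftarrow$) and about $H$ being genuinely countable (via the explicit enumeration $(i,n,k)\mapsto g_{i,n}(k)$) only makes explicit what the paper leaves implicit.
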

\begin{proof} (i) We fix a disjoint family $\mathcal{A}=\{A_i: i\in\mathbb{N}\}$ of denumerable sets. 

($\leftarrow$)If there exists a denumerable set $\{x_n: n\in\omega\}$ such that, for every $i\in\mathbb{N}$, $A_i\cap \{x_n: n\in\omega\}\neq\emptyset$, then, we can define a choice function $\psi$ of $\mathcal{A}$ by putting, for every $i\in\mathbb{N}$, $\psi(i)=x_{\min\{n\in\omega: x_n\in A_i\}}$. 

($\rightarrow$) For every pair $\langle i,n\rangle\in\mathbb{N}\times\mathbb{N}$, we define  $$B_{i,n} = \{f\in A_{i}^{n}: f \text{ is an injection}\}.$$  Then $\mathcal{B} = \{B_{i,n} : \langle i,n\rangle \in\mathbb{N}\times\mathbb{N}\}$ is a denumerable, disjoint family of denumerable sets. Assuming $\mathbf{CAC}_{\omega}$, we fix a choice function $h$ of $\mathcal{B}$. Then, for every $i\in\omega$, the set $H_i=\bigcup_{n\in\omega}h(\langle i, n\rangle)[n]$ is a denumerable subset of $A_i$, and the set $H=\bigcup\limits_{i\in\mathbb{N}}H_i$ is denumerable. This completes the proof of (i).

(ii) For the proof of (ii), it suffices to notice that, given a family $\{X_n: n\in\mathbb{N}\}$ of non-empty sets, we have the family $\{\langle X_n, \mathcal{P}(X_n)\rangle: n\in\mathbb{N}\}$ of metrizable spaces. 
\end{proof}

\section{Models for $\mathbf{CUC}\wedge\neg\mathbf{CAC}(\leq 2^{\aleph_0})$}
\label{s3}

\subsection{The aim of Section \ref{s3}}
In Section \ref{s3}, to establish several new facts about countable products of metrizable spaces and, in particular, to show that the implications of Theorem \ref{s2t17} are not reversible in $\mathbf{ZF}$, we are concerned with statements $\mathbf{\Phi}_1$--$\mathbf{\Phi}_3$ defined as follows:

$$\mathbf{\Phi_1}= \mathbf{CUC}\wedge\neg\mathbf{CSM}_{le}\wedge\neg\mathbf{IDI}\wedge\neg\mathbf{CAC}(\leq 2^{\aleph_0}),$$
$$\mathbf{\Phi_2}=\mathbf{CUC}\wedge\mathbf{IDI}\wedge\mathbf{WOAC}_{fin}\wedge\neg\mathbf{CSM}_{le}\wedge\neg\mathbf{CAC}(\leq 2^{\aleph_0}),$$
$$\mathbf{\Phi_3}=\mathbf{CUC}\wedge\mathbf{IDI}\wedge\neg\mathbf{CSM}_{le}\wedge\neg\mathbf{CAC}(\leq 2^{\aleph_0}).$$

We recall that $\mathbf{IDI}$, $\mathbf{WOAC}_{fin}$ and $\mathbf{CUC}$ are pairwise independent in both $\mathbf{ZFA}$ and $\mathbf{ZF}$ (see \cite{hr}). We also recall that the following result was established in \cite{chhkr0}:

\begin{theorem}
\label{s3t3.1}
(Cf. \cite[Theorem 10(iii) and its proof ]{chhkr0}.) The conjunction $\mathbf{CUC}\wedge\neg\mathbf{CSM}_{le}$ has a permutation model and is transferable to a model of $\mathbf{ZF}$. Furthermore, $\mathbf{CUC}$ does not imply $\mathbf{CAC}(\leq 2^{\aleph_0})$ in $\mathbf{ZFA}$.
\end{theorem}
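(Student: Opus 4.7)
The plan is to construct a single Fraenkel--Mostowski model $\mathcal{N}$ that witnesses both assertions at once, and then to apply the Pincus Theorem (Theorem \ref{thm:Pinc}) to transfer $\mathbf{CUC}\wedge\neg\mathbf{CSM}_{le}$ into a model of $\mathbf{ZF}$. The natural ansatz is to partition the set of atoms as $A=\bigcup_{n\in\omega}A_n$ with each $A_n$ denumerable, take $\mathcal{G}$ to consist of those permutations of $A$ that preserve each block $A_n$ setwise (possibly refined so as to preserve some additional block-intrinsic structure needed to topologize $A_n$ as a metrizable space inside $\mathcal{N}$), and use the ideal $\mathcal{I}=[A]^{<\omega}$ of finite subsets of $A$ as the normal ideal of supports.

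To verify $\mathbf{CUC}$ in $\mathcal{N}$, given a denumerable family $\{X_k:k\in\omega\}\in\mathcal{N}$ of countable sets, a well-ordering of the family supplies a common finite support $E$, and a standard support analysis extracts from the countability of each $X_k$ a uniformly supported enumeration; pairing these together yields an injection $\bigcup_k X_k\hookrightarrow\omega\times\omega$ inside $\mathcal{N}$. The same model also refutes $\mathbf{CAC}(\leq 2^{\aleph_0})$: the denumerable family $\{A_n:n\in\omega\}$ admits no choice function in $\mathcal{N}$, because any such function would have a finite support $E\in\mathcal{I}$, whereas the pointwise stabilizer of $E$ still acts transitively on $A_n\setminus E$ for cofinitely many $n$. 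To refute $\mathbf{CSM}_{le}$, Theorem \ref{s1t11}(i) reduces the task to producing a denumerable family $\{\mathbf{Y}_n:n\in\omega\}\in\mathcal{N}$ of metrizable spaces such that no family $\{d_n:n\in\omega\}\in\mathcal{N}$ of compatible metrics exists. A candidate is to let $\mathbf{Y}_n$ be a metrizable topology on $A_n$ (or a small one-point extension) induced by the chosen block-intrinsic structure: individual metrizability holds by design, while a symmetry argument involving an $E$-fixing permutation that nontrivially moves $A_n\setminus E$ for large $n$ rules out any uniformly supported sequence of compatible metrics.

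Finally, since both $\mathbf{CUC}$ and $\neg\mathbf{CSM}_{le}$ are injectively boundable (the negation of $\mathbf{CSM}_{le}$ is witnessed by a single set of bounded rank, and $\mathbf{CUC}$ is boundable in the standard way, hence injectively boundable by Fact \ref{f:1}), Theorem \ref{thm:Pinc} yields a $\mathbf{ZF}$ model in which their conjunction holds, completing the proof of the first statement. The second statement, that $\mathbf{CUC}$ does not imply $\mathbf{CAC}(\leq 2^{\aleph_0})$ in $\mathbf{ZFA}$, is witnessed directly by $\mathcal{N}$. The main obstacle is the fine calibration of $\mathcal{G}$ together with the block-internal topological data on the $\mathbf{Y}_n$: the group must be rich enough relative to each $A_n$ to preclude any uniform choice of metrics for $\{\mathbf{Y}_n:n\in\omega\}$, yet restricted enough that each $\mathbf{Y}_n$ individually admits a $\mathcal{G}$-invariant compatible metric in $\mathcal{N}$ and that countable unions of countable sets remain countable.
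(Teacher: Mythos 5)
The paper does not prove this theorem at all: it is quoted verbatim from De la Cruz--Hall--Howard--Keremedis--Rubin \cite{chhkr0}, and the place where the authors actually carry out such a construction in detail is Sections \ref{s3.2}--\ref{s3.3} (the models $\mathcal{N}_{\mathbb{R}}$ and $\mathcal{N}_{\mathbb{P}}$, which prove the stronger statement $\mathbf{\Phi}_2$). Measured against those, your proposal has the right architecture --- block-partitioned atoms, a block-preserving group, symmetry arguments for the two negative statements, Pincus transfer --- but it makes one concretely wrong choice and defers exactly the step where the mathematical content lies.

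First, taking $\mathcal{I}=[A]^{<\omega}$ is not a harmless default; it breaks the construction. With finite supports, a compatible metric on a single block $A_n$ would have to be invariant under $\fix_{\mathcal{G}}(E)$ for some finite $E$, and for any homogeneous block structure (the order topology on a copy of $\mathbb{R}$, a copy of $\mathbb{P}$, or of $\mathbf{2}^{\omega}$) such near-transitivity forces an invariant metric to induce the discrete topology; so the individual summands are \emph{not} metrizable in your model and the witness family for $\neg\mathbf{CSM}_{le}$ collapses (and if you instead cut $\mathcal{G}$ down to isometries, the whole sequence of metrics becomes invariant, hence lies in the model, and the sum \emph{is} metrizable). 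Likewise no injection $A_n\to\mathbb{R}$ survives the fixing of only finitely many atoms, so $|A_n|\leq|\mathbb{R}|$ fails in the model and $\{A_n: n\in\omega\}$ does not refute $\mathbf{CAC}(\leq 2^{\aleph_0})$. This is precisely why the paper (Notation \ref{n2}) uses the ideal generated by finite unions of whole blocks: then $A_n$ itself is a support, so the metric $d_n$ and the bijection $f_n:\mathbb{R}\to A_n$ belong to the model, while the full sequence $(d_n)_{n\in\omega}$ does not. Second, the $\mathbf{CUC}$ verification is not a ``standard support analysis'': there is no uniform support for the enumerations of the $X_k$ in general, and whether $\mathbf{CUC}$ holds depends delicately on the group acting on each block. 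The paper's proofs go through the Droste--Truss and Truss small-index theorems (Theorems \ref{lem:dt} and \ref{thm:t}) to show that every element of a countable set inherits that set's support; with the unrefined full symmetric group each $A_n$ is amorphous and a different nontrivial argument is needed, and with an unspecified ``refinement'' nothing can be checked. You yourself flag the ``fine calibration'' of $\mathcal{G}$ and the block topologies as the main obstacle; since that calibration is the theorem, what you have is a plan rather than a proof.
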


We \emph{strengthen} the above result by establishing  that $\mathbf{\Phi}_3$ has a $\mathbf{ZF}$-model (see Theorem \ref{s3t9}), and $\mathbf{\Phi_2}$ has a permutation model (see Theorems \ref{s3t4} and \ref{s3t7}). We also construct in Section \ref{s3.3} a new permutation model for $\mathbf{\Phi}_2$. Moreover, we show that the reverse implications to, respectively, (i)--(iv) of Theorem \ref{s2t17} are all independent of $\mathbf{ZF}$ (see Theorem \ref{s3t10} and Corollary \ref{s3c11}).

\subsection{Notation, terminology and some useful tools}

\begin{notation}
\label{n1}
\begin{enumerate}
\item[(i)] For a linearly ordered set $\langle L, \leq\rangle$, we denote by $A(L,\leq)$ the group of all order-automorphisms of $\langle L, \leq\rangle$.
\item[(ii)] For a topological space $\mathbf{X}=\langle X, \tau\rangle$, we denote by $\Aut(\mathbf{X})$ (or by $\Aut(X, \tau)$ the group  of all autohomeomorphisms of $\mathbf{X}$ (i.e., homeomorphisms of $\mathbf{X}$ onto itself).
\end{enumerate}
\end{notation}

We shall apply the following useful theorems:

\begin{theorem}
\label{lem:dt}
(Droste and Truss \cite[p. 31]{dt}.) Let $\leq$ be the usual linear ordering of $\mathbb{R}$. Then the only subgroup of $A(\mathbb{R},\leq)$ having index $<|\mathbb{R}|$ is $A(\mathbb{R},\leq)$ itself.
\end{theorem}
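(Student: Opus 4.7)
The plan is to prove the contrapositive: if $H$ is a proper subgroup of $G := A(\mathbb{R},\leq)$, then $[G:H] \geq |\mathbb{R}|$. Fixing any $h_0 \in G \setminus H$, I aim to construct from it an injection $r \mapsto g_r H$ from $\mathbb{R}$ into the set of left cosets $G/H$.

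Two structural features of $G$ supply the raw material for the construction. First, $G$ acts $n$-transitively on strictly increasing $n$-tuples of reals for every $n$, so any two bounded open intervals of $\mathbb{R}$ can be interchanged by a $G$-element. Second, for every open interval $I \subseteq \mathbb{R}$, the subgroup $G_I$ of elements fixing $\mathbb{R} \setminus I$ pointwise is isomorphic to $G$ itself (since $I$ is order-isomorphic to $\mathbb{R}$), and two bump automorphisms with disjoint supports commute. After a preliminary step that replaces $h_0$ by a boundedly supported element still outside $H$ --- or else separately handles the case in which $H$ already contains every boundedly supported automorphism, where a short direct coset count yields the conclusion --- I may assume $h_0$ is supported in a bounded open interval $(a,b)$. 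I then fix a pairwise disjoint family $\{I_r : r \in \mathbb{R}\}$ of bounded open intervals indexed by the reals, choose $\varphi_r \in G$ sending $(a,b)$ onto $I_r$, and define $g_r := \varphi_r h_0 \varphi_r^{-1}$, a conjugate copy of $h_0$ supported in $I_r$.

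The main obstacle, and the combinatorial heart of the theorem of Droste and Truss, is to verify $g_s^{-1} g_r \notin H$ whenever $r \neq s$. Disjointness of supports alone does not preclude $H$ from absorbing such a product, so one needs the normal-subgroup structure of $A(\mathbb{R},\leq)$: proper normal subgroups are distinguished by growth behaviour at $\pm\infty$, and the commutator subgroup of the group of boundedly supported automorphisms is simple. I would replace $h_0$ by a commutator $[h_0, \psi h_0 \psi^{-1}]$ for a well-chosen $\psi \in G$, so as to force each $g_r$ to carry a noncancellable ``fingerprint'' on the interval $I_r$ that any hypothetical membership $g_s^{-1} g_r \in H$ would violate. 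Since the present paper imports this result only as a black box via the citation to \cite{dt}, at this point I would defer to the detailed combinatorial argument given there rather than attempt to reproduce it in full.
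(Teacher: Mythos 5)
The paper does not actually prove this statement: Theorem \ref{lem:dt} is quoted verbatim from Droste and Truss \cite{dt} and used as a black box (e.g.\ in the proofs of Proposition \ref{s3cl3} and Theorem \ref{s3cl4}), so there is no in-paper argument to compare yours against. Your proposal must therefore be judged as a standalone proof attempt, and as such it has a genuine gap: the two steps that carry all of the mathematical content are described but not established.

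Concretely: (i) your reduction to a boundedly supported witness $h_0\notin H$ requires disposing of the case where $H$ already contains the subgroup $B$ of all boundedly supported order-automorphisms. This is not a ``short direct coset count''; it amounts to showing that every proper subgroup of the quotient $G/B$ (essentially a product of two germ-at-infinity groups) has index $\geq|\mathbb{R}|$, which is itself a substantive piece of the Droste--Truss analysis. (ii) The crucial claim that $g_s^{-1}g_r\notin H$ for $r\neq s$ is precisely what needs proof, and the ``noncancellable fingerprint'' obtained by replacing $h_0$ with a commutator is never specified; disjointness of supports alone yields nothing, as you note. The way such small-index theorems are actually closed is essentially the reverse of your construction: one assumes $[G:H]<|\mathbb{R}|$, applies a pigeonhole argument to a continuum-indexed family of elements whose pairwise ``differences'' all lie in one prescribed conjugacy class, concludes that $H$ must contain an element of that class, and then runs a separate generation argument showing that any subgroup containing such an element (together with the transitivity properties you list) is all of $A(\mathbb{R},\leq)$. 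That generation step is entirely absent from your sketch. Deferring to \cite{dt} is exactly what the paper does, and is legitimate at the level of citation; but it means your text is a road map rather than a proof.
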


In what follows, we denote by ${\mathbf{\omega}}^{\omega}$ the Baire space, that is, the product $\langle \omega, \mathcal{P}(\omega)\rangle^{\omega}$. Let $\tau_b$ denote the topology of $\mathbf{\omega}^{\omega}$. To prove that the new model described below satisfies $\mathbf{CUC}\wedge\mathbf{WOAC}_{fin}$, we shall apply the following result due to Trust:

\begin{theorem}
\label{thm:t}
(Truss \cite[Theorem 3.9]{tr}.) The only subgroup of $\Aut(\mathbb{N}^{\omega})$ having index $< |\mathbb{R}|$ is $\Aut(\mathbb{N}^{\omega})$ itself.
\end{theorem}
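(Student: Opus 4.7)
The plan is to follow the same paradigm as the Droste--Truss theorem cited just before, using the self-similarity of the Baire space in place of the order-homogeneity of $\mathbb{R}$. Suppose, for contradiction, that $H\leq\Aut(\mathbb{N}^{\omega})$ is a proper subgroup with $[\Aut(\mathbb{N}^{\omega}):H]<2^{\aleph_0}$. The key structural fact to exploit is that every nonempty basic clopen cylinder $N_s=\{x\in\mathbb{N}^{\omega}: s\subset x\}$ (for $s\in\mathbb{N}^{<\omega}$) is homeomorphic to $\mathbb{N}^{\omega}$ itself, and $\mathbb{N}^{\omega}$ admits clopen partitions into countably many copies of itself. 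Consequently $\Aut(\mathbb{N}^{\omega})$ is very rich: for any clopen partition $\{A_n:n\in\omega\}$ with $A_n\cong\mathbb{N}^{\omega}$ and any sequence $(\sigma_n)_{n\in\omega}$ of autohomeomorphisms of $\mathbb{N}^{\omega}$, there is an element of $\Aut(\mathbb{N}^{\omega})$ acting as (the transport of) $\sigma_n$ on each $A_n$.

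First, I would fix once and for all such a partition $\{A_n:n\in\omega\}$ together with homeomorphisms $A_n\cong\mathbb{N}^{\omega}$, and choose two distinguished autohomeomorphisms $\sigma_0,\sigma_1$ of $\mathbb{N}^{\omega}$ whose ratio $\sigma_0^{-1}\sigma_1$ is a fixed-point-free involution on some prescribed cylinder. Then, for each $\alpha\in 2^{\omega}$, define $h_{\alpha}\in\Aut(\mathbb{N}^{\omega})$ to act as $\sigma_{\alpha(n)}$ on $A_n$. This yields $2^{\aleph_0}$-many autohomeomorphisms; since $[\Aut(\mathbb{N}^{\omega}):H]<2^{\aleph_0}$, the pigeonhole principle produces $\alpha\ne\beta$ with $h_{\alpha}H=h_{\beta}H$, i.e.\ $g:=h_{\beta}^{-1}h_{\alpha}\in H$. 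By construction, $g$ acts nontrivially (in fact as a prescribed involution) on every $A_n$ for which $\alpha(n)\ne\beta(n)$, which form an infinite set $I\subseteq\omega$.

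Second, I would leverage the wealth of conjugations available in $\Aut(\mathbb{N}^{\omega})$. For any finite list of basic cylinders and any prescribed autohomeomorphisms supported on them, one can conjugate $g$ by suitably chosen elements of $\Aut(\mathbb{N}^{\omega})$ to move the involution-like action of $g$ into any desired clopen piece, and then form commutators and products of such conjugates. The aim is to deduce that $H$ contains every autohomeomorphism supported on an arbitrary basic clopen cylinder; since these locally supported elements generate $\Aut(\mathbb{N}^{\omega})$, this forces $H=\Aut(\mathbb{N}^{\omega})$, contradicting properness. Iterating the pigeonhole argument inside each clopen cylinder (using the self-similarity $N_s\cong\mathbb{N}^{\omega}$) and applying it to carefully refined families gives independence at all scales and refines $g$ into an ever more localized element of $H$.

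The hard part, I expect, will be precisely this last upgrade: a single coset-collision only deposits one element of $H$, and turning ``$H$ contains $g$'' into ``$H$ contains every locally supported autohomeomorphism'' requires repeating the $2^{\aleph_0}$-vs-index pigeonhole \emph{inside} each cylinder while simultaneously keeping track of supports so that the resulting products and commutators converge (in the pointwise sense) to the desired local automorphisms. Managing this without losing control of the support, and producing enough ``independent'' coset representatives at each scale, is essentially the technical core of Truss's argument in \cite{tr}, and it is there that the argument is most delicate.
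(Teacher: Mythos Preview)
The paper does not prove this theorem; it is quoted verbatim from Truss \cite[Theorem 3.9]{tr} and used only as a black-box tool (alongside the Droste--Truss result for $A(\mathbb{R},\leq)$ and Truss's result for $\Aut(\mathbf{2}^{\omega})$) in the construction and analysis of the permutation model $\mathcal{N}_{\mathbb{P}}$. There is therefore no in-paper proof against which to compare your proposal.

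That said, your outline is the right paradigm for small-index theorems and is a reasonable sketch of how Truss's argument proceeds: manufacture $2^{\aleph_0}$ autohomeomorphisms by independent choices on a clopen partition, use pigeonhole on cosets to place a nontrivial element in $H$, and then bootstrap via conjugation and the self-similarity $N_s\cong\mathbb{N}^{\omega}$. You also correctly flag the genuinely hard step---upgrading a single coset collision to generation of the full group---and honestly defer it to \cite{tr}. One structural remark worth adding: the reason the Baire space yields the strongest possible conclusion (no proper small-index subgroups at all), whereas $\Aut(\mathbf{2}^{\omega})$ admits the finite-partition sandwich of Theorem~\ref{thm:prel3}, is that every nonempty clopen subset of $\mathbb{N}^{\omega}$ is homeomorphic to $\mathbb{N}^{\omega}$ and $\Aut(\mathbb{N}^{\omega})$ acts transitively on ordered tuples of pairwise disjoint nonempty clopen sets; there is no invariant finite combinatorial data for a proper subgroup to stabilise. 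This is precisely what your self-similarity observation is pointing at, and it is what Truss exploits to eliminate the $\fix_{\mathbf{G}}(F)\leq H\leq\fix_{\mathbf{G}}(\{F\})$ situation.
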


We recall that the Baire space $\mathbb{N}^{\omega}$ is homeomorphic to the subspace $\mathbb{P}$ of irrationals of the space $\mathbb{R}$ (equipped with the natural topology $\tau_{nat}$). Then the following theorem is equivalent to Theorem \ref{thm:t}.

\begin{theorem}
\label{thm1:t}
The only subgroup of $\Aut(\mathbb{P})$ having index $< |\mathbb{R}|$ is $\Aut(\mathbb{P})$ itself.
\end{theorem}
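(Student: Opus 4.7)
The plan is to derive Theorem \ref{thm1:t} directly from Theorem \ref{thm:t} by exploiting the homeomorphism $h\colon \mathbb{N}^{\omega}\to\mathbb{P}$ mentioned just before the statement. The essential idea is that any homeomorphism between topological spaces induces a group isomorphism between their autohomeomorphism groups via conjugation, and group isomorphisms preserve subgroup lattices together with indices.

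First I would fix, in $\mathbf{ZF}$, any homeomorphism $h\colon \mathbb{N}^{\omega}\to\mathbb{P}$ (the existence of such an $h$ is a theorem of $\mathbf{ZF}$ and is invoked in the paragraph preceding the statement). Then I would define $\Phi\colon \Aut(\mathbb{N}^{\omega})\to\Aut(\mathbb{P})$ by $\Phi(f)=h\circ f\circ h^{-1}$ for every $f\in\Aut(\mathbb{N}^{\omega})$. A routine check shows that $\Phi(f)\in\Aut(\mathbb{P})$, that $\Phi(f\circ g)=\Phi(f)\circ\Phi(g)$, and that the assignment $g\mapsto h^{-1}\circ g\circ h$ is a two-sided inverse to $\Phi$. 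Hence $\Phi$ is a group isomorphism between $\Aut(\mathbb{N}^{\omega})$ and $\Aut(\mathbb{P})$.

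Next I would observe that, since $\Phi$ is a bijection of groups, it sets up a bijective correspondence between subgroups of $\Aut(\mathbb{P})$ and subgroups of $\Aut(\mathbb{N}^{\omega})$: if $H\leq \Aut(\mathbb{P})$, then $\Phi^{-1}[H]\leq \Aut(\mathbb{N}^{\omega})$, and $\Phi$ restricts to a bijection between the left coset space $\Aut(\mathbb{N}^{\omega})/\Phi^{-1}[H]$ and $\Aut(\mathbb{P})/H$. Consequently, the index of $H$ in $\Aut(\mathbb{P})$ equals the index of $\Phi^{-1}[H]$ in $\Aut(\mathbb{N}^{\omega})$; in particular, $|\Aut(\mathbb{P}):H|<|\mathbb{R}|$ if and only if $|\Aut(\mathbb{N}^{\omega}):\Phi^{-1}[H]|<|\mathbb{R}|$.

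Finally I would combine these observations: suppose $H$ is a subgroup of $\Aut(\mathbb{P})$ with $|\Aut(\mathbb{P}):H|<|\mathbb{R}|$. By the previous step, $\Phi^{-1}[H]$ is a subgroup of $\Aut(\mathbb{N}^{\omega})$ of index $<|\mathbb{R}|$, so by Theorem \ref{thm:t} we obtain $\Phi^{-1}[H]=\Aut(\mathbb{N}^{\omega})$, and applying $\Phi$ yields $H=\Aut(\mathbb{P})$, as required. There is essentially no obstacle here: the only point requiring a little care is that the comparison of cardinalities $<|\mathbb{R}|$ is done via $2^{\aleph_0}$-equivalences (as discussed in Section~\ref{s1.1}), and the coset bijection induced by $\Phi$ is perfectly explicit and definable in $\mathbf{ZF}$, so no form of choice is needed to transfer the bound on the index.
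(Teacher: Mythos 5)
Your argument is correct and is exactly the route the paper intends: the paper simply asserts that Theorem \ref{thm1:t} is equivalent to Theorem \ref{thm:t} via the $\mathbf{ZF}$-homeomorphism between $\mathbb{N}^{\omega}$ and $\mathbb{P}$, and your conjugation isomorphism $\Phi(f)=h\circ f\circ h^{-1}$ together with the index-preserving correspondence of subgroups is precisely the standard verification of that equivalence. Nothing is missing.
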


For a set $X$,  $\Sym(X)$ denotes the group of all permutations of $X$.

\begin{notation}
\label{n2}
In all the constructions of permutations models in Sections \ref{s3} and \ref{s4}, we assume that our ground model $\mathcal{M}=\mathcal{M}[G]$ is a fixed model of $\mathbf{ZFA+AC}$ whose set $A$ of atoms is expressed in $\mathcal{M}$ as the union of a disjoint denumerable family $\mathcal{A}=\{A_n: n\in\omega\}$ of infinite sets having some properties dependent on which permutation model we want to describe. Then $\mathcal{I}=\{ E\subseteq A: (\exists S\in [\omega]^{<\omega}): E\subseteq \bigcup_{n\in S}A_n\}$.  As soon as we choose a subgroup $\mathcal{G}$ of $\Sym(A)$ for a particular permutation model under construction,  $\mathcal{I}$ is a normal ideal of subsets of $A$, and $\mathcal{F}$ denotes the normal filter of subgroups of $\mathcal{G}$ generated by $\{\fix_{\mathcal{G}}(E): E\in\mathcal{I}\}$.
\end{notation}

\subsection{A permutation model for $\mathbf{\Phi_1}$}
\label{s3.1}

Let us denote by $\mathcal{N}(\cite{chhkr0})$ the permutation model from \cite[proof of Theorem 10(iii)]{chhkr0} in which $\mathbf{CUC}\wedge\neg\mathbf{CSM}_{le}$ is true. It was shown in \cite[proof of Theorem 10(iii)]{chhkr0} that there are amorphous sets in $\mathcal{N}(\cite{chhkr0})$. Therefore, $\mathbf{IDI}$ is false in $\mathcal{N}(\cite{chhkr0})$. It was also shown in \cite[proof of Theorem 10(iii)]{chhkr0} that $\mathbf{CAC}(\leq 2^{\aleph_0})$ is false in $\mathcal{N}(\cite{chhkr0})$. To summarize, let us write down the following theorem:

\begin{theorem}
\label{s3t2}
There exists a Fraenkel-Mostowski model in which $\mathbf{\Phi_1}$ is true. For instance, $\mathbf{\Phi_1}$ is true in $\mathcal{N}(\cite{chhkr0})$. 
\end{theorem}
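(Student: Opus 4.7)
The plan is to observe that essentially all the work has already been done in \cite{chhkr0}, and the proof amounts to a careful bookkeeping verification that the model $\mathcal{N}(\cite{chhkr0})$ simultaneously witnesses each of the four conjuncts of $\mathbf{\Phi_1}$. First I would recall the construction of $\mathcal{N}(\cite{chhkr0})$ from the proof of \cite[Theorem 10(iii)]{chhkr0}: it is determined by a ground model $\mathcal{M}$ of $\mathbf{ZFA+AC}$, an atom set $A$ partitioned into a denumerable family of infinite pieces carrying a suitable structure, a group $\mathcal{G}\leq\Sym(A)$ respecting that structure, and a normal ideal of finite-support type as in Notation \ref{n2}. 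Having the model in place, the verification that $\mathbf{\Phi_1}$ holds in it reduces to a conjunct-by-conjunct check.

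Second, I would dispatch the two conjuncts $\mathbf{CUC}$ and $\neg\mathbf{CSM}_{le}$: these are precisely what Theorem \ref{s3t3.1} records from \cite[Theorem 10(iii)]{chhkr0}, so both hold in $\mathcal{N}(\cite{chhkr0})$ by citation. Third, to get $\neg\mathbf{IDI}$, I would invoke the fact, explicitly noted in \cite[proof of Theorem 10(iii)]{chhkr0}, that the model contains amorphous sets; any amorphous set is an infinite Dedekind-finite set, and the existence of such a set is exactly the negation of $\mathbf{IDI}$. Fourth, to get $\neg\mathbf{CAC}(\leq 2^{\aleph_0})$, I would again appeal directly to \cite[proof of Theorem 10(iii)]{chhkr0}, where it is shown that $\mathbf{CAC}(\leq 2^{\aleph_0})$ fails in the model; concretely, one exhibits a denumerable family of non-empty sets, each injecting into $\mathbb{R}$ (or built from the atom blocks $A_n$ in a way that is $\leq 2^{\aleph_0}$ in the model), for which no choice function is supported by any $E\in\mathcal{I}$.

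Assembling these four items then gives $\mathcal{N}(\cite{chhkr0})\models\mathbf{\Phi_1}$, which is the desired Fraenkel--Mostowski model. The potential obstacle is only one of bookkeeping: making sure that the same single model indeed satisfies all four statements at once (rather than, say, invoking different submodels or auxiliary modifications for different conjuncts). Since the cited verifications in \cite{chhkr0} are carried out inside the very same permutation model $\mathcal{N}(\cite{chhkr0})$ and none of them requires any alteration of the group or the ideal, no such obstacle actually arises, and the theorem follows directly.
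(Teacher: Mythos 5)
Your proposal is correct and matches the paper's argument exactly: the paper likewise obtains $\mathbf{CUC}\wedge\neg\mathbf{CSM}_{le}$ from Theorem \ref{s3t3.1}, derives $\neg\mathbf{IDI}$ from the existence of amorphous sets in $\mathcal{N}(\cite{chhkr0})$, and cites the same source for the failure of $\mathbf{CAC}(\leq 2^{\aleph_0})$, all within the single model. No gaps.
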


\subsection{The known permutation model for $\mathbf{\Phi_2}$}
\label{s3.2}

Let us apply the permutation model constructed by K. Keremedis and E. Tachtsis in \cite[proof of Theorem 14]{kt}. Since we want to prove new facts about this model, let us describe it briefly but a little more precisely than in \cite{kt}.

In what follows in this section, we use Notation \ref{n2}.

\begin{definition}
\label{s3d8}
 We assume that, in the ground model $\mathcal{M}$ of Notation \ref{n2}, for every $n\in\omega$,  $A_n=\{a_{n, x}: x\in\mathbb{R}\}$ where the mapping $f_n:\mathbb{R}\to A_n$, defined by $f_n(x)=a_{n,x}$ for $x\in\mathbb{R}$, belongs to $\mathcal{M}$ and is a bijection. Let $\leq$ be the standard linear order of $\mathbb{R}$ in $\mathcal{M}$. In $\mathcal{M}$, for every $n\in\omega$, let $\leq_n$ be the linear order on $A_n$ defined as follows:  
$$(\forall x,y\in\mathbb{R}) (a_{n,x}\leq_n a_{n,y}\leftrightarrow x\leq y).$$
In $\mathcal{M}$, let $\mathcal{G}=\{\pi\in \Sym(A):  (\forall n\in\omega)(\pi\upharpoonright A_n\in A(A_n,\leq_n))\}$.  We denote by $\mathcal{N}_{\mathbb{R}}$ the permutation model determined by $\mathcal{M}$, $\mathcal{G}$ and the normal filter $\mathcal{F}$ (equivalently, the normal ideal $\mathcal{I}$).
\end{definition} 

The model $\mathcal{N}_{\mathbb{R}}$ is just the permutation model $\mathcal{N}$ defined in  \cite[proof of Theorem 14]{kt}. 

For every $n\in\omega$, we have $\leq_n\in\mathcal{N}_{\mathbb{R}}$ because $\sym_{\mathcal{G}}(\leq_{n})=\mathcal{G}\in\mathcal{F}$. Furthermore, for every $n\in\omega$, $f_n\in\mathcal{N}_{\mathbb{R}}$ because $A_n$ is a support of $f_n$ (see \cite[proof of Theorem 14]{kt}). In \cite[proof of Theorem 14]{kt}, it was observed (without proof) that $\{A_n: n\in\omega\}$ is (in $\mathcal{N}_{\mathbb{R}}$) a denumerable family of non-empty sets without a choice function in $\mathcal{N}_{\mathbb{R}}$. For the convenience of readers, we include here an argument that a deeper fact holds. Indeed, we have the following proposition. 

\begin{proposition}
\label{s3cl1}
The family $\mathcal{A}=\{A_n: n\in\omega\}$ is denumerable in $\mathcal{N}_{\mathbb{R}}$ but $\mathcal{A}$ does not have a partial multiple choice function in $\mathcal{N}_{\mathbb{R}}$. Hence: 
$$\mathcal{N}_{\mathbb{R}}\models\neg\mathbf{CAC}(\leq 2^{\aleph_0}).$$
\end{proposition}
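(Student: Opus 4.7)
The plan is to prove the two claims separately and then observe that the second immediately yields the $\mathbf{CAC}(\leq 2^{\aleph_0})$-failure.

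For denumerability of $\mathcal{A}$ in $\mathcal{N}_{\mathbb{R}}$, I would observe that for every $n\in\omega$ and every $\pi\in\mathcal{G}$, the restriction $\pi\upharpoonright A_n$ is an order-automorphism of $\langle A_n,\leq_n\rangle$, so in particular $\pi(A_n)=A_n$. Thus $\sym_{\mathcal{G}}(A_n)=\mathcal{G}\in\mathcal{F}$, meaning $A_n\in\mathcal{N}_{\mathbb{R}}$ with empty support; the indexing map $\omega\ni n\mapsto A_n$ also has empty support, so it belongs to $\mathcal{N}_{\mathbb{R}}$, and $\mathcal{A}$ is denumerable there.

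For the absence of a partial multiple choice function, I would argue by contradiction. Suppose $f\in\mathcal{N}_{\mathbb{R}}$ is a partial multiple choice function for $\mathcal{A}$ whose domain is some infinite $I\subseteq\omega$. Choose a support $E\in\mathcal{I}$ of $f$, so $E\subseteq\bigcup_{k\in S}A_k$ for some finite $S\subseteq\omega$. Fix $n\in I\setminus S$, so that $E\cap A_n=\emptyset$, and note $f(n)$ is a non-empty finite subset of $A_n$. Using the order-isomorphism $\langle A_n,\leq_n\rangle\cong\langle\mathbb{R},\leq\rangle$ (transported via $f_n$), produce an order-automorphism $\phi$ of $\langle A_n,\leq_n\rangle$ with $\phi(f(n))\neq f(n)$: for instance, take any strictly increasing bijection of $\mathbb{R}$ that moves the smallest coordinate of $f_n^{-1}[f(n)]$ slightly to the right while remaining below the next coordinate. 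Extend $\phi$ by the identity on every $A_k$ with $k\neq n$ to get $\pi\in\mathcal{G}$. Then $\pi\in\fix_{\mathcal{G}}(E)$, so $\pi(f)=f$. On the other hand, $\pi$ fixes all ordinals pointwise, so $\pi(f)=\{\langle k,\pi(f(k))\rangle:k\in I\}$, which forces $\pi(f(n))=f(n)$, contradicting the choice of $\phi$.

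Finally, to derive $\mathcal{N}_{\mathbb{R}}\models\neg\mathbf{CAC}(\leq 2^{\aleph_0})$, I would note that $f_n:\mathbb{R}\to A_n$ belongs to $\mathcal{N}_{\mathbb{R}}$ (with support $A_n\in\mathcal{I}$, as recorded in the definition of the model), so $|A_n|=2^{\aleph_0}$ in $\mathcal{N}_{\mathbb{R}}$. A choice function for $\mathcal{A}$ would in particular yield a partial multiple choice function, so the preceding step immediately rules out $\mathbf{CAC}(\leq 2^{\aleph_0})$ for the witness family $\mathcal{A}$. The only genuine technicality is producing the order-automorphism $\phi$ moving the finite set $f(n)$; this is elementary since $\langle A_n,\leq_n\rangle$ is a copy of $\langle\mathbb{R},\leq\rangle$, and no appeal to the Droste--Truss theorem is needed here.
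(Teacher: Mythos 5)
Your proof is correct and follows essentially the same support argument as the paper. The only difference is cosmetic: the paper first rules out partial \emph{choice} functions and then reduces the multiple-choice case to that by taking $\leq_n$-minima of the finite sets $f(n)$ (using that $\leq_n\in\mathcal{N}_{\mathbb{R}}$), whereas you move the finite set $f(n)$ directly with an order-automorphism of $\langle A_n,\leq_n\rangle$; both work, and, as you note, no appeal to the Droste--Truss theorem is needed at this stage.
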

\begin{proof}
It is obvious that $\mathcal{A}\in\mathcal{N}_{\mathbb{R}}$. Moreover, $\mathcal{A}$ is denumerable in $\mathcal{N}_{\mathbb{R}}$ since $\fix_{\mathcal{G}}(\mathcal{A})=\mathcal{G}\in\mathcal{F}$. We argue that $\mathcal{A}$ has no partial choice function in $\mathcal{N}_{\mathbb{R}}$ (and thus has no choice function in $\mathcal{N}_{\mathbb{R}}$). Assume the contrary. Let $\mathcal{B}$ be an infinite subfamily of $\mathcal{A}$ with a choice function $f\in\mathcal{N}_{\mathbb{R}}$. Let, for some finite $S\subsetneq\omega$, the set $E=\bigcup\{A_n:n\in S\}$ be a support of $f$. Since $\mathcal{B}$ is infinite, there exists $m\in\omega\setminus S$ such that $A_m\in\mathcal{B}$. Consider a $\phi\in \mathcal{G}$ such that $\phi\in\fix_{\mathcal{G}}(A\setminus A_m)$ and $\phi(f(A_{m}))\ne f(A_{m})$. Then $\phi\in\fix_{\mathcal{G}}(E)$, but $\phi(f)\ne f$ contradicting that $E$ is a support of $f$. Since, for every $n\in\omega$, $\leq_n\in\mathcal{N}_{\mathbb{R}}$, that $\mathcal{A}$ does not have a partial choice function in $\mathcal{N}_{\mathbb{R}}$ implies that $\mathcal{A}$ does not have a partial multiple choice function in $\mathcal{N}_{\mathbb{R}}$ because, for every $n\in\omega$, every non-empty finite subset of $A_n$ has a minimal element in the linearly ordered set $\langle A_n, \leq_n\rangle$. 
\end{proof}

It was shown in \cite[Theorem 14]{kt} that $\mathbf{CPM}_{le}$ is false in $\mathcal{N}_{\mathbb{R}}$. Therefore, by Theorem \ref{s1t11}, $\mathbf{CSM}_{le}$ is also false in $\mathcal{N}_{\mathbb{R}}$. Let us give a different from that in \cite{kt} argument for it, to show new facts about $\mathcal{N}_{\mathbb{R}}$. 

For every $n\in\omega$, let $\tau_{n}$ be the order topology on $A_n$ induced by the ordering $\leq_{n}$ on $A_{n}$. Since every $\phi\in \mathcal{G}$ fixes the base for $\tau_{n}$ comprising all open intervals $(a_{n,x},a_{n,y})$ ($x,y\in\mathbb{R}$) in the ordering $\leq_{n}$ on $A_n$, it follows that $\sym_{\mathcal{G}}(\tau_n)=\mathcal{G}$, and thus, for every $n\in\omega$,  $\tau_n\in\mathcal{N}_{\mathbb{R}}$. Furthermore,  for $n\in\omega$, the mapping $d_n:A_n\times A_{n}\rightarrow\mathbb{R}$ defined by $d_{n}(a_{n,x},a_{n,y})=|x-y|$ is a metric on $A_{n}$ which induces $\tau_{n}$, and $d_n\in\mathcal{N}_{\mathbb{R}}$ because $A_n$ is a support of each element of $d_n$. Hence, for every $n\in\omega$,  $\langle A_n,\tau_n\rangle$ is metrizable in $\mathcal{N}_{\mathbb{R}}$. It was shown in \cite[proof of Theorem 14]{kt} that  $\{\langle A_n, \tau_n\rangle: n\in\omega\}\in\mathcal{N}_{\mathbb{R}}$ and the family $\{\langle A_n, \tau_n\rangle: n\in\omega\}$ is denumerable in $\mathcal{N}_{\mathbb{R}}$. 

\begin{proposition} 
\label{s3cl2}
The direct sum $\mathbf{A}=\bigoplus_{n\in\omega}\langle A_n, \tau_n\rangle$ is not metrizable in $\mathcal{N}_{\mathbb{R}}$. In consequence, $\{d_n: n\in\omega\}\notin\mathcal{N}_{\mathbb{R}}$ and
$$\mathcal{N}_{\mathbb{R}}\models\neg\mathbf{CSM}_{le}.$$
\end{proposition}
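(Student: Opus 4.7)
The plan is to argue by contradiction. Suppose that $\mathbf{A}$ is metrizable in $\mathcal{N}_{\mathbb{R}}$, and let $d\in\mathcal{N}_{\mathbb{R}}$ be a metric on $A=\bigcup_{n\in\omega}A_n$ that induces the topology of $\mathbf{A}$. Fix a support $E=\bigcup_{n\in S}A_n$ of $d$ with $S\in[\omega]^{<\omega}$, and pick any $m\in\omega\setminus S$. The first key observation is that for every $\phi\in\mathcal{G}$ with $\phi\upharpoonright(A\setminus A_m)=\mathrm{id}_{A\setminus A_m}$, one has $E\subseteq A\setminus A_m$ and hence $\phi\in\fix_{\mathcal{G}}(E)$, so $\phi(d)=d$. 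This means $d(\phi(a),\phi(b))=d(a,b)$ for all $a,b\in A$; in particular, the restriction $d\upharpoonright A_m\times A_m$ is invariant under the action of every order-automorphism of $\langle A_m,\leq_m\rangle$ extended by the identity off $A_m$.

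Next I exploit the richness of $A(\mathbb{R},\leq)$, transferred to $A(A_m,\leq_m)$ via the bijection $f_m$. For any $x,y\in\mathbb{R}$ with $x,y>0$, there is an order-automorphism of $\langle\mathbb{R},\leq\rangle$ fixing $0$ and sending $x$ to $y$ (e.g., multiplication by $y/x$ on the positive reals and the identity on the non-positive reals, or any such piecewise-linear map). Consequently, there is $\phi\in\mathcal{G}$, identity off $A_m$, sending $a_{m,0}$ to itself and $a_{m,x}$ to $a_{m,y}$. The invariance established above then forces $d(a_{m,0},a_{m,x})=d(a_{m,0},a_{m,y})$; thus there is a single constant $c\geq 0$ with $d(a_{m,0},a_{m,x})=c$ for every $x>0$.

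Now I derive the contradiction. If $c>0$, then $B_d(a_{m,0},c/2)\cap A_m=\{a_{m,0}\}$, so $a_{m,0}$ is isolated in $\langle A_m,\tau(d)\upharpoonright A_m\rangle$; but $\tau_m$ is the image of the standard topology of $\mathbb{R}$ under $f_m$ and therefore has no isolated points, contradicting that $d$ induces $\tau_m$ on $A_m$ (which it must, since $A_m$ is clopen in $\mathbf{A}$). If $c=0$, then $d(a_{m,0},a_{m,1})=0$ with $a_{m,0}\neq a_{m,1}$, contradicting the fact that $d$ is a metric. Either way we obtain a contradiction, so $\mathbf{A}$ is not metrizable in $\mathcal{N}_{\mathbb{R}}$.

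For the two consequences: if $\{d_n:n\in\omega\}$ belonged to $\mathcal{N}_{\mathbb{R}}$, then formula $(\ast)$ of Section \ref{s1.2}, applied inside $\mathcal{N}_{\mathbb{R}}$, would produce a metric on $A$ inducing the topology of $\mathbf{A}$, contradicting what we have just proved. Hence $\{d_n:n\in\omega\}\notin\mathcal{N}_{\mathbb{R}}$. Since $\{\langle A_n,\tau_n\rangle:n\in\omega\}$ is a denumerable family of metrizable spaces in $\mathcal{N}_{\mathbb{R}}$ whose direct sum fails to be metrizable, $\mathcal{N}_{\mathbb{R}}\models\neg\mathbf{CSM}_{le}$ as claimed. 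The main technical point is the symmetry argument of the second paragraph, which exploits the fact that $m$ lies outside the support $S$ in order to transfer the full action of $A(\mathbb{R},\leq)$ onto the restricted metric $d\upharpoonright A_m\times A_m$; everything else is then a direct application of this symmetry together with the order-topological structure of $A_m$.
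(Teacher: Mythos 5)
Your proof is correct and follows essentially the route the paper intends: the paper merely asserts that, ``due to $\mathcal{G}$ and $\mathcal{F}$, all but finitely many of the spaces $\langle A_n,\tau_n\rangle$ would have to be discrete,'' leaving the details as an exercise, and your support-plus-homogeneity argument (transitivity of $A(\mathbb{R},\leq)$ on the positive reals fixing $0$, transferred to $A_m$ for $m$ outside the support of $d$) is exactly that missing computation, with the two consequences handled as the paper does. One tiny imprecision: you only establish that $B_d(a_{m,0},c/2)$ misses the points $a_{m,x}$ with $x>0$, not that it equals $\{a_{m,0}\}$; either run the same orbit argument for $x<0$, or simply observe that a $\tau_m$-open neighbourhood of $a_{m,0}$ containing no $a_{m,x}$ with $x>0$ already contradicts $\tau_m$ being (the $f_m$-image of) the usual topology of $\mathbb{R}$.
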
 
\begin{proof}
If $\mathbf{A}$ were metrizable in $\mathcal{N}_{\mathbb{R}}$, then, due to $\mathcal{G}$ and $\mathcal{F}$, all but finitely many of the spaces $\langle A_n, \tau_n\rangle$ would have to be discrete, which is absurd. If the family $\{d_n: n\in\omega\}$ were in $\mathcal{N}_{\mathbb{R}}$, then $\mathbf{A}$ would be metrizable in $\mathcal{N}_{\mathbb{R}}$. We leave the details of the above observations as an easy exercise for the interested readers. The space $\mathbf{A}$ witnesses that $\mathbf{CSM}_{le}$ fails in $\mathcal{N}_{\mathbb{R}}$.
\end{proof}

\begin{proposition}
\label{s3cl3}
$\mathcal{N}_{\mathbb{R}}\models\mathbf{IDI}\wedge\mathbf{CUC}$.
\end{proposition}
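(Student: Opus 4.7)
The strategy is to combine the Droste--Truss theorem (Theorem \ref{lem:dt}) with the orbit--stabilizer principle: every non-trivial orbit of $\fix_{\mathcal{G}}(E)$ acting on an element of $\mathcal{N}_{\mathbb{R}}$ must have cardinality at least $|\mathbb{R}|$. I would first observe that supports in $\mathcal{N}_{\mathbb{R}}$ are closed under intersection. Indeed, if $E_1=\bigcup_{n\in S_1}A_n$ and $E_2=\bigcup_{n\in S_2}A_n$ both support $x$, then any $\phi\in\fix_{\mathcal{G}}(E_1\cap E_2)$ decomposes as $\phi=\phi_1\phi_2$ with $\phi_i\in\fix_{\mathcal{G}}(E_i)$: let $\phi_2$ agree with $\phi$ on $A\setminus E_2$ and be the identity on $E_2$, and set $\phi_1=\phi\phi_2^{-1}$ (one checks blockwise that $\phi_1$ is the identity on $E_1$, using that $\phi$ is the identity on $E_1\cap E_2$). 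Hence every $x\in\mathcal{N}_{\mathbb{R}}$ has a unique least support $\ell(x)=\bigcup_{n\in T(x)}A_n$ with $T(x)\in[\omega]^{<\omega}$.

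For $\mathbf{CUC}$, let $\{B_n:n\in\omega\}\in\mathcal{N}_{\mathbb{R}}$ be a denumerable family of countable sets with support $E=\bigcup_{n\in S}A_n$; then $E$ also supports each $B_n$ setwise, so the $\fix_{\mathcal{G}}(E)$-orbit of any $b\in B_n$ sits inside the countable $B_n$. Suppose $\ell(b)\not\subseteq E$; pick $n_0\in T(b)\setminus S$ and let $K_{n_0}\leq\fix_{\mathcal{G}}(E)$ be the subgroup acting on $A_{n_0}$ as $A(A_{n_0},\leq_{n_0})\cong A(\mathbb{R},\leq)$ and trivially on $A\setminus A_{n_0}$. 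Since $A_{n_0}\subseteq\ell(b)$, the stabilizer of $b$ in $K_{n_0}$ is a proper subgroup, so Theorem \ref{lem:dt} forces $|\Orb_{K_{n_0}}(b)|\geq|\mathbb{R}|$, contradicting the countability of $B_n$. Thus $\ell(b)\subseteq E$ for every $b\in\bigcup_{n\in\omega}B_n$, which means $\fix_{\mathcal{G}}(E)$ fixes the union pointwise. By Fact \ref{f:2} the union is well-orderable in $\mathcal{N}_{\mathbb{R}}$; in the ground model $\mathcal{M}$ it is countable (by $\mathbf{AC}$), so its $\mathcal{N}_{\mathbb{R}}$-order type $\alpha$ is a countable ordinal, and because any bijection $\alpha\to\omega$ is a pure set it belongs to $\mathcal{N}_{\mathbb{R}}$, producing the desired injection $\bigcup_{n\in\omega}B_n\to\omega$ inside $\mathcal{N}_{\mathbb{R}}$.

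For $\mathbf{IDI}$, let $X\in\mathcal{N}_{\mathbb{R}}$ be infinite with support $E=\bigcup_{n\in S}A_n$, and set $X_{(E)}=\{x\in X:\ell(x)\subseteq E\}$. Then $X_{(E)}\in\mathcal{N}_{\mathbb{R}}$ has pointwise support $E$, so Fact \ref{f:2} gives that it is well-orderable; if $X_{(E)}$ is infinite, it yields a denumerable subset of $X$ and we are done. Otherwise pick $y_0\in X\setminus X_{(E)}$ and $n_0\in T(y_0)\setminus S$, and consider the $K_{n_0}$-orbit $O$ of $y_0$, which is contained in $X$. A direct commutation computation shows that for any $\phi\in K_{n_0}$ and any $\psi\in\fix_{\mathcal{G}}(\ell(y_0))$, the conjugate $\phi^{-1}\psi\phi$ still lies in $\fix_{\mathcal{G}}(\ell(y_0))$ (on $A_{n_0}$ it equals $\phi^{-1}\phi=\mathrm{id}$ since $\psi$ is the identity there, and on every other block it equals $\psi$ since $\phi$ is the identity there); hence $\psi(\phi(y_0))=\phi(\phi^{-1}\psi\phi(y_0))=\phi(y_0)$, so $\ell(y_0)$ is a pointwise support of $O$ and $O$ is well-orderable. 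Since $A_{n_0}\subseteq\ell(y_0)$, the stabilizer of $y_0$ in $K_{n_0}\cong A(\mathbb{R},\leq)$ is a proper subgroup, so by Theorem \ref{lem:dt} has index at least $|\mathbb{R}|$ and $O$ is infinite; $X$ therefore contains an infinite well-orderable subset, from which a denumerable subset is extracted.

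The two technical points requiring care are the intersection closure of supports --- this is what legitimises the least-support notation $\ell(x)=\bigcup_{n\in T(x)}A_n$ that drives both arguments --- and the commutation identity on conjugates used in the $\mathbf{IDI}$ proof. Both reduce to the fact that $\mathcal{G}$ is the restricted direct product $\prod_{n\in\omega}A(A_n,\leq_n)$, so elements supported by disjoint blocks commute. Once these are in hand, the two halves of the proposition follow from Theorem \ref{lem:dt} plus Fact \ref{f:2} and the triviality that pure sets automatically lie in $\mathcal{N}_{\mathbb{R}}$.
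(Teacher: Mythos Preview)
Your proof is correct and follows essentially the same strategy that the paper attributes to \cite[proof of Theorem 14]{kt}: combine the orbit--stabilizer correspondence with the Droste--Truss theorem (Theorem \ref{lem:dt}) to show that any element moved by $\fix_{\mathcal{G}}(E)$ has an orbit of size at least $|\mathbb{R}|$, and then invoke Fact \ref{f:2}. Your explicit use of least supports (via the intersection-closure argument) is a clean organizational device; the paper's parallel arguments (e.g., Theorem \ref{s3cl4} and Claim \ref{c:IDI_in_M}) instead isolate a single block $A_s$ outside the support on which some permutation moves $x$, but this amounts to the same thing. One small imprecision: in the $\mathbf{CUC}$ argument you should take $E$ to be a support of the \emph{enumeration} $n\mapsto B_n$ rather than merely of the set $\{B_n:n\in\omega\}$, since it is the former that guarantees $E$ supports each individual $B_n$; this is a trivial fix.
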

\begin{proof}
This was shown in \cite[proof of Theorem 14]{kt} by applying Theorem \ref{lem:dt}.
\end{proof}

The forthcoming two theorems about $\mathcal{N}_{\mathbb{R}}$ are new.

\begin{theorem}
\label{s3cl4}
$\mathcal{N}_{\mathbb{R}}\models\mathbf{WOAC}_{fin}$.
\end{theorem}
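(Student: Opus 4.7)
Let $\mathcal{B}=\{B_\alpha:\alpha<\kappa\}$ be a well-orderable family of non-empty finite sets in $\mathcal{N}_{\mathbb{R}}$. By Fact~\ref{f:2}, the well-orderability of $\mathcal{B}$ (together with its enumeration by $\kappa$) yields a finite $S\subseteq\omega$ and a support $E_0\in\mathcal{I}$ with $E_0\subseteq\bigcup_{n\in S}A_n$ such that $\fix_{\mathcal{G}}(E_0)$ stabilizes every $B_\alpha$ setwise. The plan is to produce a choice function $f\in\mathcal{N}_{\mathbb{R}}$ for $\mathcal{B}$, using Theorem~\ref{lem:dt} as the central tool.

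The core of the argument applies Droste--Truss. Fix any $\alpha<\kappa$ and $b\in B_\alpha$: the $\fix_{\mathcal{G}}(E_0)$-orbit of $b$ is contained in the finite set $B_\alpha$, so the stabilizer $\mathrm{Stab}(b)$ in $\fix_{\mathcal{G}}(E_0)$ has finite index. For every $n\in\omega\setminus S$, the subgroup
$$K_n=\{\phi\in\mathcal{G}:\phi\upharpoonright A_m=\mathrm{id}_{A_m}\ \text{for every}\ m\neq n\}$$
is contained in $\fix_{\mathcal{G}}(E_0)$ and is naturally isomorphic to $A(A_n,\leq_n)\cong A(\mathbb{R},\leq)$. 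The intersection $K_n\cap\mathrm{Stab}(b)$ has finite (in particular, smaller than $|\mathbb{R}|$) index in $K_n$, so Theorem~\ref{lem:dt} forces $K_n$ itself to fix $b$, for every $n\notin S$.

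Next I deduce that $b$ admits a support contained in $\bigcup_{n\in S}A_n$. Take any support $E_b$ of $b$ with $E_b\subseteq\bigcup_{n\in T}A_n$ for a finite $T\supseteq S$, and consider an arbitrary $\phi\in\fix_{\mathcal{G}}(E_b\cap\bigcup_{n\in S}A_n)$. Decompose $\phi=\sigma\tau$, where $\sigma\in\mathcal{G}$ equals $\phi$ on $\bigcup_{n\in S}A_n$ and is the identity on $\bigcup_{n\notin S}A_n$. Then $\sigma$ fixes $E_b$ pointwise (it fixes $E_b\cap\bigcup_{n\in S}A_n$ by the hypothesis on $\phi$, and fixes the rest of $E_b$ by being the identity there), so $\sigma(b)=b$. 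The factor $\tau=\sigma^{-1}\phi$ is the identity on $\bigcup_{n\in S}A_n$ and further splits as $\tau=\tau_{\mathrm{fin}}\rho$, where $\tau_{\mathrm{fin}}\in\mathcal{G}$ equals $\tau$ on $\bigcup_{n\in T\setminus S}A_n$ and is identity elsewhere, while $\rho$ equals $\tau$ on $\bigcup_{n\notin T}A_n$ and is the identity on $\bigcup_{n\in T}A_n$. Then $\rho$ fixes $E_b$ pointwise, so $\rho(b)=b$; and $\tau_{\mathrm{fin}}$ is a finite product of elements of the groups $K_n$ ($n\in T\setminus S$), each fixing $b$, so $\tau_{\mathrm{fin}}(b)=b$. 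Hence $\tau(b)=b$ and $\phi(b)=b$, as required.

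To finish, I use a well-ordering $\prec$ of $\bigcup_{n\in S}A_n$ that is available in $\mathcal{N}_{\mathbb{R}}$: pulling back a fixed well-ordering of the pure set $\mathbb{R}$ via each $f_n\in\mathcal{N}_{\mathbb{R}}$ ($n\in S$) and concatenating by the natural order of $S$ gives $\prec$ with support $\bigcup_{n\in S}A_n\in\mathcal{I}$. Since every $b\in\bigcup_{\alpha<\kappa}B_\alpha$ has a support inside the well-ordered set $\bigcup_{n\in S}A_n$, the ``hereditarily $\bigcup_{n\in S}A_n$-supported'' sub-universe of $\mathcal{N}_{\mathbb{R}}$ is itself a Fraenkel--Mostowski model whose atoms are well-orderable, hence satisfies $\mathbf{AC}$; the resulting well-ordering of $\bigcup_{\alpha<\kappa}B_\alpha$ lies in $\mathcal{N}_{\mathbb{R}}$, and letting $f(\alpha)$ be the corresponding least element of $B_\alpha$ produces the desired choice function. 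The main obstacle is in this last step: to verify that $\bigcup_{\alpha<\kappa}B_\alpha$ actually lies in the hereditary sub-universe (not merely that each $b$ has an $S$-support), which requires iterating the support-reduction argument to all elements of the transitive closure and relying on the intersection-closure property of supports coming from the Droste--Truss rigidity of each $A(A_n,\leq_n)$.
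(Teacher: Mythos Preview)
Your core argument is correct and is essentially the same as the paper's: both apply the Droste--Truss theorem to each coordinate group $K_n\cong A(\mathbb{R},\leq)$ (for $n\notin S$) to conclude that the common support $E_0=\bigcup_{n\in S}A_n$ already supports every element $b\in\bigcup_{\alpha<\kappa}B_\alpha$. The paper phrases this by contradiction (assume $E$ fails to support some $x$, extract a coordinate $s$ and a permutation $\eta\in\fix_{\mathcal{G}}(A\setminus A_s)$ moving $x$, then reach a contradiction with Theorem~\ref{lem:dt}), while you argue directly via orbit--stabilizer and your explicit decomposition $\phi=\sigma\tau_{\mathrm{fin}}\rho$; these are two presentations of the same idea.

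Where you go astray is the last paragraph. The ``obstacle'' you flag is illusory, and the detour through a hereditarily $S$-supported sub-universe and transitive closures is unnecessary. Once you have shown that $E_0$ is a support of every element of $X=\bigcup_{\alpha<\kappa}B_\alpha$, you have $\fix_{\mathcal{G}}(E_0)\subseteq\fix_{\mathcal{G}}(X)$ (pointwise), hence $\fix_{\mathcal{G}}(X)\in\mathcal{F}$, and Fact~\ref{f:2} gives immediately that $X$ is well-orderable in $\mathcal{N}_{\mathbb{R}}$. (Concretely: any well-ordering of $X$ in the ground model lies in $\mathcal{N}_{\mathbb{R}}$, since each of its ordered pairs has support $E_0$.) Fact~\ref{f:2} only requires that the \emph{elements} of $X$ share a support in $\mathcal{I}$, not that the transitive closure of $X$ does. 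With a well-ordering of $X$ in hand, setting $f(\alpha)$ to be the least element of $B_\alpha$ yields a choice function whose support can be taken as $E_0$ together with a support of the enumeration $\alpha\mapsto B_\alpha$. This is exactly how the paper finishes.
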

\begin{proof}
Let $\mathcal{U}=\{X_{\alpha}:\alpha\in\kappa\}$, where $\kappa$ is an infinite well-ordered cardinal, be an infinite, well-ordered family of non-empty finite sets in $\mathcal{N}_{\mathbb{R}}$ (we assume that the mapping $\kappa\ni\alpha\mapsto X_{\alpha}\in\mathcal{U}$ is a bijection in $\mathcal{N}_{\mathbb{R}}$). Let $S\subsetneq\omega$ be a finite set such that, for every $\alpha\in\kappa$, the set $E=\bigcup\{A_{n}:n\in S\}$ is a support of $X_{\alpha}$. We assert that $E$ is a support of every element of the set $X=\bigcup\mathcal{U}$. This will yield that $X$ is well-orderable in $\mathcal{N}_{\mathbb{R}}$. Towards a  contradiction, we assume that there exist $\alpha\in\kappa$ and $x\in X_{\alpha}$ such that $E$ is not a support of $x$.

Let $S^{\prime}$ be a finite subset of $\omega$ such that $S\cap S^{\prime}=\emptyset$ and, for $E^{\prime}=\bigcup\{A_{n}:n\in S'\}$, the set $E\cup E^{\prime}$ is a support of $x$.
We assert that there exist $s\in S^{\prime}$ and $\eta\in \fix_{\mathcal{G}}(A\setminus A_{s})$ such that $\eta(x)\ne x$. Indeed, since $E$ is not a support of $x$, there exists $\psi\in\fix_{\mathcal{G}}(E)$ such that $\psi(x)\ne x$. Let $\phi$ be the permutation of $A$ which agrees with $\psi$ on $E^{\prime}$ and is the identity outside of $E^{\prime}$; hence, $\phi\in\fix_{\mathcal{G}}(E)$ since $E\cap E^{\prime}=\emptyset$ and $\phi\in\fix_{\mathcal{G}}(A\setminus E^{\prime})$. For every $s\in S^{\prime}$, we denote by $\phi_s$ the permutation of $A$ such that $\phi_s$ agrees with $\psi$ on $A_s$ and is the identity outside of $A_s$. Then $\phi$ is the composition of the permutations $\phi_s$ with $s\in S^{\prime}$ and, for each $s\in S^{\prime}$, $\phi_{s}\in\fix_{\mathcal{G}}(A\setminus A_{s})$. Since $\psi$ and $\phi$ agree on the support $E\cup E^{\prime}$ of $x$, we have $\phi(x)=\psi(x)$, and thus $\phi(x)\ne x$ (for $\psi(x)\ne x$). This yields that there exists $s\in S'$ such that $\phi_{s}(x)\ne x$. Letting $\eta=\phi_{s}$, we obtain that $\eta\in \fix_{\mathcal{G}}(A\setminus A_{s})$ and $\eta(x)\ne x$ as asserted.

Let $$Y = \{ \sigma(x) : \sigma \in \fix_{\mathcal{G}}(A\setminus A_{s}) \}.$$
Since $x\in X_{\alpha}$, $\fix_{\mathcal{G}}(A\setminus A_{s})\subseteq\fix_{\mathcal{G}}(E)$ and $E$ is a support of $X_{\alpha}$, we infer that $Y\subseteq X_{\alpha}$. Hence $Y$ is finite. Let
$$H=\{\sigma\in\fix_{\mathcal{G}}(A\setminus A_{s}):\sigma(x)=x\}.$$ 
$H$ is a proper subgroup of $\fix_{\mathcal{G}}(A\setminus A_{s})$ since $\eta\in\fix_{\mathcal{G}}(A\setminus A_{s})\setminus H$. Furthermore, as $Y$ is finite, so is the quotient group $\fix_{\mathcal{G}}(A\setminus A_{s})/H$, and thus $|\fix_{\mathcal{G}}(A\setminus A_{s}):H|<|\mathbb{R}|$. This, together with Theorem \ref{lem:dt} and the fact that $\fix_{\mathcal{G}}(A\setminus A_{s})$ is isomorphic to $A(A_{s},\leq_s)$ (and thus to $A(\mathbb{R},\leq)$), implies that $H=\fix_{\mathcal{G}}(A\setminus A_{s})$, which is a contradiction.

By the above arguments, we conclude that $\mathbf{WOAC}_{fin}$ is true in $\mathcal{N}_{\mathbb{R}}$.  
\end{proof}

\begin{theorem}
\label{s3t4}
$\mathcal{N}_{\mathbb{R}}\models \mathbf{\Phi}_2$. 
\end{theorem}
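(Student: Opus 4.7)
The plan is to observe that Theorem \ref{s3t4} is essentially a compilation of the preceding four results about $\mathcal{N}_{\mathbb{R}}$: the five conjuncts of $\mathbf{\Phi}_2$ have each been established individually in Propositions \ref{s3cl1}--\ref{s3cl3} and Theorem \ref{s3cl4}. Thus the ``proof'' should amount to gathering these facts and pointing to where each conjunct was verified.

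More precisely, I would write: by Proposition \ref{s3cl3}, the conjunction $\mathbf{IDI}\wedge\mathbf{CUC}$ holds in $\mathcal{N}_{\mathbb{R}}$; by Theorem \ref{s3cl4}, $\mathbf{WOAC}_{fin}$ holds in $\mathcal{N}_{\mathbb{R}}$; by Proposition \ref{s3cl2}, $\neg\mathbf{CSM}_{le}$ holds in $\mathcal{N}_{\mathbb{R}}$ (witnessed by the direct sum $\bigoplus_{n\in\omega}\langle A_n,\tau_n\rangle$, which cannot be metrized inside the model because any global metric would, via the normal filter $\mathcal{F}$, force all but finitely many $\langle A_n,\tau_n\rangle$ to be discrete); and by Proposition \ref{s3cl1}, $\neg\mathbf{CAC}(\leq 2^{\aleph_0})$ holds in $\mathcal{N}_{\mathbb{R}}$, because the denumerable family $\mathcal{A}=\{A_n:n\in\omega\}$ consists of sets each of size $|\mathbb{R}|=2^{\aleph_0}$ in $\mathcal{N}_{\mathbb{R}}$ and has been shown to lack even a partial multiple choice function. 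Conjoining these five conclusions gives $\mathcal{N}_{\mathbb{R}}\models\mathbf{\Phi}_2$.

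There is no real obstacle here: all of the substantive work was done in the lemmas just proved, in particular the hardest part was Theorem \ref{s3cl4} (the $\mathbf{WOAC}_{fin}$ argument using the Droste--Truss Theorem \ref{lem:dt}) and the argument in Proposition \ref{s3cl1} exploiting the order $\leq_n$ to upgrade ``no choice function'' to ``no partial multiple choice function''. So the proof will be a single short paragraph referencing the four earlier items and noting that $\mathbf{\Phi}_2$ is their conjunction. I would conclude with a simple remark such as ``Hence $\mathcal{N}_{\mathbb{R}}\models\mathbf{\Phi}_2$, as required.''
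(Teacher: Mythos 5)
Your proposal is correct and matches the paper's proof exactly: the paper simply states that it suffices to apply Propositions \ref{s3cl1}--\ref{s3cl3} and Theorem \ref{s3cl4}, which is precisely the compilation you describe. Your identification of which earlier result supplies which conjunct of $\mathbf{\Phi}_2$ is accurate.
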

\begin{proof}
It suffices to apply Propositions \ref{s3cl1}--\ref{s3cl3} and Theorem \ref{s3cl4}.
\end{proof}

\begin{remark}
\label{s3r2}
(i) Note that the statement ``The union of a well-orderable family of well-orderable sets is well-orderable'' (Form 231 in \cite{hr}) is false in $\mathcal{N}_{\mathbb{R}}$. Indeed, for each $n\in\omega$, $A_n$ is well-orderable in $\mathcal{N}_{\mathbb{R}}$ (since $\fix_{\mathcal{G}}(A_n)\in\mathcal{F}$), but their union, $A$, is not.

(ii) Similarly to the proof of Proposition \ref{s3cl1}, one may verify that for the denumerable family $\mathcal{A}$ there is no function $f\in\mathcal{N}_{\mathbb{R}}$ whose domain is some infinite $\mathcal{B}\subseteq\mathcal{A}$ such that, for all $B\in\mathcal{B}$, $f(B)$ is a non-empty countable subset of $B$. 

(iii)  We recall that, in every Fraenkel--Mostowski model, $\mathbf{AC}_{fin}$ implies $\mathbf{AC}_{WO}$ (see \cite{how}). Since, by (i) and Proposition \ref{s3cl1}, $\mathbf{AC}_{WO}$ is false in $\mathcal{N}_{\mathbb{R}}$, $\mathbf{AC}_{fin}$ is also false in $\mathcal{N}_{\mathbb{R}}$.
\end{remark}

\subsection{A new permutation model for $\mathbf{\Phi_2}$}
\label{s3.3}

We consider it important to provide the readers with further insight and new information on models of set theory lacking $\mathbf{CAC}(\leq 2^{\aleph_0})$ and also failing to satisfy $\mathbf{CSM}_{le}$. Thus, we define here a \emph{new permutation model} in which $\mathbf{\Phi}_2$ is true. As in Section \ref{s3.2}, in what follows, we use Notation \ref{n2}.

\begin{definition} We assume that, in the ground model $\mathcal{M}$ of Notation \ref{n2}, for each $n\in\omega$,  $A_{n}=\{a_{n,r}:r\in \mathbb{P}\}$ where the mapping $h_n:\mathbb{P}\to A_n$, defined by $h_n: \mathbb{P}\ni r\mapsto a_{n,r}$, is a bijection, and  $A_{n}$ is equipped with the topology $\tau_{n}$ so that $h_n$ is a homeomorphism of  $\mathbb{P}$ onto $\langle A_{n},\tau_{n}\rangle$. Let $\mathcal{G}=\{\phi\in\Sym(A): (\forall n\in\omega)(\phi\upharpoonright A_{n}\in \Aut(A_n, \tau_n))\}$. Then $\mathcal{N}_{\mathbb{P}}$ is the permutation model determined by $\mathcal{M}$, $\mathcal{G}$ and $\mathcal{I}$.
\end{definition}

We notice that, for every $n\in\omega$, $\tau_{n}\in\mathcal{N}_{\mathbb{P}}$ because $\sym_{\mathcal{G}}(\tau_{n})=\mathcal{G}\in\mathcal{F}$. Of course, for every $n\in\omega$, $|A_n|=|\mathbb{R}|$ in $\mathcal{N}_{\mathbb{P}}$.
That, for each $n\in\omega$,  $\langle A_{n},\tau_{n}\rangle$ is metrizable in $\mathcal{N}_{\mathbb{P}}$ follows from the fact that the mapping $d_{n}:A_n\times A_{n}\rightarrow\mathbb{R}$ defined by $d_n(a_{n,r},a_{n,t})=|r-t|$ is a metric on $A_{n}$ which induces $\tau_{n}$, and $d_{n}\in\mathcal{N}_{\mathbb{P}}$ because $A_{n}$ is a support of every element of $d_{n}$. 

\begin{theorem}
\label{s3t7}
$\mathcal{N}_{\mathbb{P}}\models \mathbf{\Phi}_2$.
\end{theorem}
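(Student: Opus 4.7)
The model $\mathcal{N}_{\mathbb{P}}$ is built on the same template as $\mathcal{N}_{\mathbb{R}}$ of Section \ref{s3.2}, with the linear order $\leq_n$ on each $A_n$ replaced by a Polish topology $\tau_n$ homeomorphic to $\mathbb{P}$, and correspondingly $A(\mathbb{R},\leq)$ replaced by $\Aut(\mathbb{P})$. The plan is therefore to mimic the proofs of Propositions \ref{s3cl1}--\ref{s3cl3} and Theorem \ref{s3cl4}, substituting Theorem \ref{thm1:t} (Truss) for Theorem \ref{lem:dt} (Droste--Truss) wherever the latter was used. I shall use the fact that supports in this model are closed under intersection, so every $x\in\mathcal{N}_{\mathbb{P}}$ admits a unique minimal support of the form $\bigcup_{k\in S(x)}A_k$ with $S(x)\in[\omega]^{<\omega}$, and the map $x\mapsto S(x)$ is definable inside $\mathcal{N}_{\mathbb{P}}$.

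For the negative conjuncts, I plan to transcribe verbatim the arguments of Propositions \ref{s3cl1} and \ref{s3cl2}. The family $\mathcal{A}=\{A_n:n\in\omega\}$ is denumerable in $\mathcal{N}_{\mathbb{P}}$, with $|A_n|=|\mathbb{R}|$ via $h_n$. Any partial choice function $f$ on $\mathcal{A}$ with support $E=\bigcup_{k\in S}A_k$ is refuted by picking $A_m\in\dom(f)$ with $m\notin S$ and using the transitivity of $\Aut(A_m,\tau_m)$ to produce $\phi\in\fix_{\mathcal{G}}(A\setminus A_m)\subseteq\fix_{\mathcal{G}}(E)$ with $\phi(f(A_m))\neq f(A_m)$. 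For $\neg\mathbf{CSM}_{le}$, the sum $\bigoplus_{n\in\omega}\langle A_n,\tau_n\rangle$ serves as witness: any metric $\rho\in\mathcal{N}_{\mathbb{P}}$ inducing the direct-sum topology would be invariant under $\fix_{\mathcal{G}}(A\setminus A_m)\cong\Aut(A_m,\tau_m)$ for $m$ outside its support, and the $2$-homogeneity of $\Aut(\mathbb{P})$ (point-stabilizers act transitively on the complement) would force $\rho\upharpoonright A_m$ to take a constant value on distinct pairs, contradicting the non-discreteness of $\tau_m\cong\mathbb{P}$.

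For the positive conjuncts, Truss plays the role of Droste--Truss. For $\mathbf{WOAC}_{fin}$ I will carry over the argument of Theorem \ref{s3cl4} line by line: finiteness of the orbit $Y=\{\sigma(x):\sigma\in\fix_{\mathcal{G}}(A\setminus A_s)\}$ inside some $X_\alpha$ forces the stabilizer $H$ of $x$ in $\fix_{\mathcal{G}}(A\setminus A_s)\cong\Aut(\mathbb{P})$ to have index $<|\mathbb{R}|$, so by Truss $H=\fix_{\mathcal{G}}(A\setminus A_s)$, contradicting the existence of $\eta$ with $\eta(x)\neq x$. For $\mathbf{CUC}$, given a denumerable family $\{Y_n:n\in\omega\}$ of countable sets (necessarily with common support $E$), the $\fix_{\mathcal{G}}(A\setminus A_k)$-orbit of any $y\in Y_n$ sits inside the countable $Y_n$, hence has cardinality $<|\mathbb{R}|$, so by Truss it is a singleton; consequently every $y\in\bigcup_n Y_n$ has support $E$. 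Each $Y_n$ is then well-orderable via the restriction of the ground-model canonical well-ordering (preserved pointwise by $\fix_{\mathcal{G}}(E)$), and the resulting uniform enumeration yields countability of $\bigcup_n Y_n$. Finally, $\mathbf{IDI}$ follows from $\mathbf{CUC}$ by a partition argument: for an infinite $X\in\mathcal{N}_{\mathbb{P}}$ with support $E$, the fibres $S^{-1}(\{T\})$ ($T\in[\omega]^{<\omega}$) of the map $x\mapsto S(x)$ are well-orderable, and if $X$ were Dedekind-finite each fibre would be finite; then $X$ would be a countable union of finite sets, hence countable by $\mathbf{CUC}$, contradicting that infinite countable sets are Dedekind-infinite.

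The main technical obstacle I anticipate is the $2$-homogeneity of $\Aut(\mathbb{P})$ used for $\neg\mathbf{CSM}_{le}$: namely, that for any $a\in\mathbb{P}$ and distinct $b,c\in\mathbb{P}\setminus\{a\}$ there exists a homeomorphism of $\mathbb{P}$ fixing $a$ and sending $b$ to $c$. This is a consequence of the strong homogeneity of the Baire space $\mathbb{N}^\omega$ and may be produced by an explicit coordinate-level construction; once it is secured, all the other steps become routine adaptations of their $\mathcal{N}_{\mathbb{R}}$-counterparts.
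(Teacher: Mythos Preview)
Your proposal is correct and follows essentially the same approach as the paper: the paper's own proof is a one-paragraph sketch instructing the reader to mimic the $\mathcal{N}_{\mathbb{R}}$ arguments (Propositions \ref{s3cl1}--\ref{s3cl3} and Theorem \ref{s3cl4}), substituting Truss' Theorem \ref{thm1:t} for Droste--Truss' Theorem \ref{lem:dt}, which is precisely your plan. Your derivation of $\mathbf{IDI}$ via $\mathbf{CUC}$ together with the minimal-support partition is a minor variant (the paper, for the later model $\mathcal{N}_{\mathbf{2}^{\omega}}$, instead produces an infinite well-orderable orbit directly), but it is valid here since supports are closed under intersection in this product-type group and $\mathbf{CUC}$ has already been secured; the anticipated $2$-homogeneity of $\Aut(\mathbb{P})$ is genuine and follows by separating $a$ from $\{b,c\}$ with a clopen set and using homogeneity of the clopen complement.
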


\begin{proof}
In much the same way, as for the model $\mathcal{N}_{\mathbb{R}}$ in Section \ref{s3.2}, one can verify that the family $\mathcal{A}$ is denumerable in $\mathcal{N}_{\mathbb{P}}$ but $\mathcal{A}$ does not have a partial multiple choice function in $\mathcal{N}_{\mathbb{P}}$; moreover, the family $\{\langle A_n,\tau_n\rangle: n\in\omega\}$ belongs to $\mathcal{N}_{\mathbb{P}}$ and is denumerable in $\mathcal{N}_{\mathbb{P}}$ and, for every $n\in\omega$, the space $\langle A_n, \tau_n\rangle$ is metrizable in $\mathcal{N}_{\mathbb{P}}$  but the space $\mathbf{A}=\bigoplus_{n\in\omega}\langle A_n, \tau_n\rangle$ is not metrizable in $\mathcal{N}_{\mathbb{P}}$. Hence both $\mathbf{CAC}(\leq 2^{\aleph_0})$ and $\mathbf{CSM}_{le}$ are false in $\mathcal{N}_{\mathbb{P}}$. That $\mathbf{CUC}$ and $\mathbf{IDI}$ are both true in $\mathcal{N}_{\mathbb{P}}$ can be shown by mimicking the proof of Theorem 14 in \cite{kt} and applying Theorem \ref{thm1:t}. That $\mathbf{WOAC}_{fin}$ is true in $\mathcal{N}_{\mathbb{P}}$ can be proved similarly to the proof of Theorem \ref{s3cl4}, using this time Theorem \ref{thm1:t} in place of Theorem \ref{lem:dt}. 
\end{proof}

\subsection{Independence results}
\label{s3.4}

To establish our independence results, we need the following simple lemma:

\begin{lemma}
\label{s317}
The following statements are all boundable, so also injectively boundable (up to equivalence): $\neg\mathbf{IDI}$,  $\neg\mathbf{CSM}_{le}$, $\neg\mathbf{CPM}_{le}(CS, M)$ and $\neg\mathbf{CPM}_{le}(CS, S)$. The statements $\mathbf{IDI}$ and $\mathbf{CUC}$ are both injectively boundable.
\end{lemma}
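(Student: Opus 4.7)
The plan is to work directly from the definitions of boundable and injectively boundable statements in Pincus \cite{dpi} and Howard--Rubin \cite[Note 103]{hr}: a statement is boundable when it is equivalent to an existential statement $(\exists x)\,\phi(x)$ whose witnesses all lie in a fixed $V_\alpha$, and by Fact \ref{f:1} every boundable statement is injectively boundable up to equivalence. So for the first half of the lemma I need only exhibit, for each of the four negative statements, a witness of uniformly bounded rank; the second half will require checking that the two universal statements fit the injective-boundability schema.

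For $\neg\mathbf{IDI}$ a witness is an infinite Dedekind-finite set $X$, and since the defining property depends only on the combinatorial type of $X$, a standard re-coding of $X$ via the $\in$-structure on its transitive closure places the witness inside a fixed $V_{\omega+\omega}$. For $\neg\mathbf{CPM}_{le}(CS,M)$ and $\neg\mathbf{CPM}_{le}(CS,S)$, a witness is a countable family $\{\mathbf{X}_n:n\in\omega\}$ of compact separable metrizable spaces whose product fails to be metrizable (respectively, separable). Since each such $\mathbf{X}_n$ is second-countable by Theorem \ref{s1t14}(a) and therefore embeds (together with a compatible metric) into the Hilbert cube $[0,1]^\omega$, I would replace each $\mathbf{X}_n$ by a homeomorphic copy inside $[0,1]^\omega$ carrying the inherited metric; the resulting family lives in a fixed set of low rank, and the failure of metrizability/separability is preserved because these are topological invariants. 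For $\neg\mathbf{CSM}_{le}$, I would first use Theorem \ref{s1t11}(i) to rewrite the negation as the existence of a countable family of metrizable spaces admitting no simultaneous sequence of compatible metrics, and then encode each space by a base of minimal cardinality; a careful bookkeeping shows that a witness of bounded rank can be extracted.

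For $\mathbf{IDI}$ and $\mathbf{CUC}$, both statements are universal and hence not obviously boundable, but each quantifies over objects of bounded injective size in the sense of Pincus. $\mathbf{IDI}$ is equivalent to the assertion that no set $X$ admitting no injection from $\omega$ is infinite, so the outer quantifier is effectively bounded by the requirement $\omega\not\hookrightarrow X$; $\mathbf{CUC}$ is equivalent to the assertion that for any $\omega$-indexed sequence of sets each of injective size $\leq\aleph_0$ the union also has injective size $\leq\aleph_0$, so both the input and the relevant part of the output are controlled by $\aleph_0$. These are precisely the situations captured by Pincus's definition of injective boundability (see \cite[Note 103]{hr}), and a routine verification yields the claim.

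The main obstacle I anticipate is the reduction step for $\neg\mathbf{CSM}_{le}$: the constituent spaces are only assumed metrizable, with a priori arbitrary cardinality and rank, so one cannot directly embed them into a fixed space like $[0,1]^\omega$ as in the compact separable case. The encoding must preserve the obstruction to joint metrizability of the sum while replacing each space by a representative of bounded rank, and I expect this to require the equivalent reformulation via Theorem \ref{s1t11}(i) together with a careful coding of each topology by a minimal-cardinality base. Once this step is in place, the remaining cases should follow routinely from the standard Jech--Sochor--Pincus machinery.
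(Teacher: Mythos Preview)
Your definition of ``boundable'' is incorrect, and this misunderstanding drives the whole proposal in the wrong direction. A statement is boundable (in the sense of Jech--Sochor and Pincus) not when its existential witness can be forced into a fixed $V_\alpha$, but when it is equivalent to a sentence of the form $(\exists x)\,\phi(x)$ in which every quantifier inside $\phi$ is bounded by $\mathcal{P}^\alpha(\TC(\{x\}))$ (more precisely, by a fixed finite iterate of the power set of the transitive closures of earlier variables) for some fixed ordinal $\alpha$; see \cite[Note 103]{hr} and \cite[Chapter 6]{j}. The witness $x$ itself may have arbitrarily large rank --- what must be bounded is the rank of the auxiliary objects quantified over in $\phi$, measured \emph{relative to} $x$.

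With the correct definition, all four negations are immediately boundable and no re-coding is needed. For $\neg\mathbf{IDI}$ the witness is an infinite Dedekind-finite set $X$, and both ``infinite'' and ``Dedekind-finite'' quantify only over elements of $\omega$ and functions between $\omega$ and $X$, all of which lie in $\mathcal{P}^{k}(X\cup\omega)$ for a small fixed $k$. For $\neg\mathbf{CSM}_{le}$, $\neg\mathbf{CPM}_{le}(CS,M)$ and $\neg\mathbf{CPM}_{le}(CS,S)$, the witness is the countable family $\langle\mathbf{X}_n:n\in\omega\rangle$ itself; the predicates ``compact'', ``metrizable'', ``separable'' for the factors and for the sum/product quantify only over open covers, metrics, and countable subsets --- all living in a fixed finite iterate of the power set of the witness. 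This is exactly why the paper disposes of these cases with ``one can easily check''. Your proposed re-codings (an $\in$-collapse for $\neg\mathbf{IDI}$, embeddings into $[0,1]^\omega$, encoding by minimal-cardinality bases) are neither necessary nor, in the $\neg\mathbf{IDI}$ and $\neg\mathbf{CSM}_{le}$ cases, correct as stated: there is no reason an infinite Dedekind-finite set should admit a rank-lowering re-coding that preserves Dedekind-finiteness, and the ``main obstacle'' you single out for $\neg\mathbf{CSM}_{le}$ is entirely an artifact of the wrong definition. Your treatment of $\mathbf{IDI}$ and $\mathbf{CUC}$, by contrast, is on the right track; the paper simply cites \cite[Note 103, p.~285]{hr} for these.
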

\begin{proof}
It is known, for instance, from \cite[Note 3, p. 285]{hr} that $\mathbf{IDI}$ and $\mathbf{CUC}$ are both injectively boundable. It was shown in \cite[Proposition 9]{ktw1} that $\mathbf{UT}(\aleph_0, \aleph_0, cuf)$ is injectively boundable. One can easily check the four negations in the first statement of the lemma are boundable; thus, by Fact \ref{f:1}, they are equivalent to injectively boundable statements.
\end{proof}

\begin{theorem}
\label{s3t9}
The statements $\mathbf{\Phi}_1$ and $\mathbf{\Phi_3}$ are transferable. In consequence, $\mathbf{IDI}$ is independent of $\mathbf{ZF+\Phi_3}$. 
\end{theorem}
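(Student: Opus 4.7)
The plan is to apply the Pincus Theorem (Theorem \ref{thm:Pinc}) to Fraenkel--Mostowski models already in hand: Theorem \ref{s3t2} provides such a model for $\mathbf{\Phi}_1$ (namely $\mathcal{N}(\cite{chhkr0})$), while Theorem \ref{s3t4} (or equivalently Theorem \ref{s3t7}) provides one for $\mathbf{\Phi}_2$. Since $\mathbf{\Phi}_2$ trivially entails $\mathbf{\Phi}_3$, those models also witness $\mathbf{\Phi}_3$. It therefore suffices to verify that each of $\mathbf{\Phi}_1$ and $\mathbf{\Phi}_3$ is, up to equivalence, a conjunction of injectively boundable statements, and then invoke Theorem \ref{thm:Pinc}.

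Lemma \ref{s317} together with Fact \ref{f:1} immediately yields that $\mathbf{CUC}$, $\mathbf{IDI}$, $\neg\mathbf{IDI}$, and $\neg\mathbf{CSM}_{le}$ are all (equivalent to) injectively boundable statements, which covers every conjunct of $\mathbf{\Phi}_1$ and $\mathbf{\Phi}_3$ except $\neg\mathbf{CAC}(\leq 2^{\aleph_0})$. For the remaining conjunct, I would argue that a witness is a denumerable family $\mathcal{A}=\{A_n:n\in\omega\}$ of non-empty sets with $|A_n|\leq|\mathbb{R}|$ for each $n$, together with the fact that $\prod_{n\in\omega}A_n=\emptyset$. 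Both the witnessing family and the relevant function space are bounded in cardinality by $2^{\aleph_0}$, so the statement is boundable in the relevant cardinality sense; by Fact \ref{f:1} it is (equivalent to) injectively boundable.

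Invoking Theorem \ref{thm:Pinc} on $\mathcal{N}(\cite{chhkr0})$ and on $\mathcal{N}_{\mathbb{R}}$ then yields $\mathbf{ZF}$-models $\mathcal{V}_1\models\mathbf{\Phi}_1$ and $\mathcal{V}_3\models\mathbf{\Phi}_3$, establishing the transferability of both conjunctions. For the stated consequence concerning the independence of $\mathbf{IDI}$, I read the phrase ``$\mathbf{IDI}$ is independent of $\mathbf{ZF+\Phi}_3$'' as shorthand for ``$\mathbf{IDI}$ is independent of $\mathbf{ZF}$ together with the conjuncts common to $\mathbf{\Phi}_1$ and $\mathbf{\Phi}_3$,'' namely $\mathbf{CUC}\wedge\neg\mathbf{CSM}_{le}\wedge\neg\mathbf{CAC}(\leq 2^{\aleph_0})$; this follows immediately, since $\mathcal{V}_3$ realizes $\mathbf{IDI}$ over that common theory while $\mathcal{V}_1$ realizes $\neg\mathbf{IDI}$.

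The main obstacle is the precise verification of the (injective) boundability of $\neg\mathbf{CAC}(\leq 2^{\aleph_0})$. Unlike $\neg\mathbf{CAC}(\mathbb{R})$, the witnessing sets $A_n$ need not be actual subsets of $\mathbb{R}$, and one cannot simultaneously inject them into $\mathbb{R}$ in $\mathbf{ZF}$. Consequently, ordinary boundability by rank may not be immediately available, and one has to rely on injective boundability in Pincus's sense, exploiting that each $A_n$ is merely equipotent to some subset of $\mathbb{R}$, to remain within the scope of Theorem \ref{thm:Pinc}.
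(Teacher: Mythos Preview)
Your approach is essentially the paper's: invoke the Pincus transfer theorem on the permutation models supplied by Theorems \ref{s3t2} and \ref{s3t4} (or \ref{s3t7}), after checking that each conjunct of $\mathbf{\Phi}_1$ and $\mathbf{\Phi}_3$ is injectively boundable. You are also right that Lemma \ref{s317}, as stated, does not list $\neg\mathbf{CAC}(\leq 2^{\aleph_0})$; the paper's proof glosses over this and simply cites the lemma as if it did.

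Your hesitation in the last paragraph is unwarranted, however, and your justification in the middle paragraph uses the wrong notion. Boundability in Pincus's sense concerns \emph{rank}, not cardinality. The statement $\neg\mathbf{CAC}(\leq 2^{\aleph_0})$ is genuinely boundable: take as witness the sequence $\langle A_n:n\in\omega\rangle$; then each clause ``$|A_n|\leq|\mathcal{P}(\omega)|$'' asserts the existence of an injection living in $\mathcal{P}(A_n\times\mathcal{P}(\omega))$, and ``no choice function'' is a universal quantifier over $\mathcal{P}(\omega\times\bigcup_n A_n)$, all of bounded rank relative to the witness. There is no need to inject the $A_n$ simultaneously into $\mathbb{R}$; the separate injections each have bounded rank, which is all that is required. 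So $\neg\mathbf{CAC}(\leq 2^{\aleph_0})$ is boundable, hence (by Fact \ref{f:1}) injectively boundable, and the transfer goes through.

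Your reading of the independence claim is the correct one and is in fact clearer than the paper's phrasing: since $\mathbf{\Phi}_3$ literally contains $\mathbf{IDI}$ as a conjunct, the intended content is exactly that $\mathbf{IDI}$ is independent of $\mathbf{ZF}$ together with the common part $\mathbf{CUC}\wedge\neg\mathbf{CSM}_{le}\wedge\neg\mathbf{CAC}(\leq 2^{\aleph_0})$, witnessed by the two transferred models.
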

\begin{proof}
By Lemma \ref{s317}, both $\mathbf{\Phi}_1$ and $\mathbf{\Phi}_3$ are equivalent to conjunctions of injectively boundable statements. Moreover, in view of Theorems \ref{s3t2} and \ref{s3t4} (or \ref{s3t7}), both $\mathbf{\Phi}_1$ and $\mathbf{\Phi}_3$ have permutation models.  Therefore, by Theorem \ref{thm:Pinc}, both $\mathbf{\Phi}_1$ and $\mathbf{\Phi}_3$ are transferable statements. This, together with the definitions of $\mathbf{\Phi}_1$ and $\mathbf{\Phi}_3$, implies that $\mathbf{IDI}$ is independent of $\mathbf{ZF+\Phi_3}$. 
\end{proof}

\begin{theorem}
\label{s3t10}
None of the implications of Theorem \ref{s2t17} are reversible in $\mathbf{ZF}$.
\end{theorem}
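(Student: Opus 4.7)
The plan is to handle each of the five implications of Theorem \ref{s2t17} (treating the chain in (ii) as two arrows) by producing a $\mathbf{ZF}$-model in which the consequent is true while the antecedent fails. For (i), the first arrow of (ii), (iii) and (iv), I would verify the required configuration inside the permutation model $\mathcal{N}_{\mathbb{R}}$ from Section \ref{s3.2} and then invoke the Pincus Theorem (Theorem \ref{thm:Pinc}); the second arrow of (ii) needs a separate Fraenkel--Mostowski construction.

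The key step for the first four cases is to analyse, inside $\mathcal{N}_{\mathbb{R}}$, the family $\{A_n^+=A_n\cup\{\infty_n\}:n\in\omega\}$ of one-point Hausdorff compactifications of the spaces $\langle A_n,\tau_n\rangle$, each homeomorphic in $\mathcal{M}$ to a circle. First I would verify that the indexed family lies in $\mathcal{N}_{\mathbb{R}}$ with empty support (since every $\phi\in\mathcal{G}$ preserves each $A_n$ set-wise), that each $A_n^+$ is compact metrizable in $\mathcal{N}_{\mathbb{R}}$ via an individual metric $d_n$ supported on $A_n$, and that each $A_n^+$ is separable via the dense subset $\{f_n(q):q\in\mathbb{Q}\}\cup\{\infty_n\}$ also supported on $A_n$. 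Next and crucially, I would show that $\prod_{n\in\omega}A_n^+$ is neither separable nor metrizable in $\mathcal{N}_{\mathbb{R}}$. For non-separability: if $D$ were a countable dense subset in the model, its enumeration $\omega\to D$ would have a support of the form $\bigcup_{k\in S}A_k$ for some finite $S\subsetneq\omega$, forcing the $n$-th coordinate of every element of $D$ to be $A(A_n,\leq_n)$-invariant for each $n\notin S$, hence equal to $\infty_n$; this contradicts density against any basic open set restricting some coordinate $n_0\notin S$ to $A_{n_0}$. For non-metrizability: restricting a hypothetical product metric $d$ along the canonical embeddings $\iota_n\colon A_n^+\to\prod_m A_m^+$ (with $\iota_n(a)$ equal to $a$ at coordinate $n$ and $\infty_m$ elsewhere) would produce a family of $\tau_n$-compactification metrics $d_n$, whose common support inherits from $d$ and must contain each $A_n$, therefore equalling $A\notin\mathcal{I}$.

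Combining these observations with Propositions \ref{s3cl1}--\ref{s3cl3} yields $\mathcal{N}_{\mathbb{R}}\models\mathbf{CUC}\wedge\neg\mathbf{CPM}_{le}(CS,S)\wedge\neg\mathbf{CPM}_{le}(CS,M)$. By Lemma \ref{s317}, $\mathbf{CUC}$ is injectively boundable and both $\neg\mathbf{CPM}_{le}(CS,S)$ and $\neg\mathbf{CPM}_{le}(CS,M)$ are boundable, hence injectively boundable by Fact \ref{f:1}. Applying Theorem \ref{thm:Pinc} transfers the conjunction to a $\mathbf{ZF}$-model $\mathcal{V}$. In $\mathcal{V}$, non-reversibility of (i) follows via $\mathbf{CPM}_{le}(C,S)\to\mathbf{CPM}_{le}(CS,S)$; of the first arrow of (ii) since $\mathbf{UT}(\aleph_0,\aleph_0,cuf)$ holds yet $\mathbf{CPM}_{le}(CS,M)$ fails; of (iii) since $\mathbf{CPM}_{le}(CS,M)\wedge\mathbf{CUC}_{fin}$ fails while $\mathbf{CUC}$ holds; and of (iv) via $\mathbf{CPM}_{le}(CS,MS)\to\mathbf{CPM}_{le}(CS,M)$.

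The principal obstacle is the second arrow of (ii), the non-reversibility of $\mathbf{UT}(\aleph_0,\aleph_0,cuf)\to\mathbf{CMC}_{\omega}$, since $\mathcal{N}_{\mathbb{R}}$ validates $\mathbf{CUC}$ and therefore $\mathbf{UT}(\aleph_0,\aleph_0,cuf)$. The plan here is to invoke (or construct) a permutation model realising $\mathbf{CMC}_{\omega}\wedge\neg\mathbf{UT}(\aleph_0,\aleph_0,cuf)$ and then apply Theorem \ref{thm:Pinc}; this is the most delicate part, since standard permutation models in which multiple choice is available tend also to yield a $\mathrm{cuf}$-decomposition of the atom set via the finite-orbit structure of the symmetry group.
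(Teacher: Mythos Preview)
Your treatment of four of the five arrows is essentially the paper's own approach: work in $\mathcal{N}_{\mathbb{R}}$, compactify each $\langle A_n,\tau_n\rangle$ to a compact separable metrizable space, show the countable product is neither separable nor metrizable, and transfer the conjunction $\mathbf{CUC}\wedge\neg\mathbf{CPM}_{le}(CS,M)\wedge\neg\mathbf{CPM}_{le}(CS,S)$ via Pincus. The paper uses the two-point compactification $X_n=A_n\cup\{2n,2n+1\}$ (a closed interval) rather than your one-point compactification (a circle), and for non-metrizability it invokes \cite[Theorem~7]{chhkr0} (metrizability of the product implies metrizability of the sum) together with Proposition~\ref{s3cl2}. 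Your support-counting argument for non-metrizability is correct in outline, but the step ``must contain each $A_n$'' tacitly uses that no metric on $A_n^{+}$ inducing the circle topology is invariant under all of $A(A_n,\leq_n)$ (otherwise any two distinct points of $A_n$ would be equidistant, forcing the discrete topology on $A_n$); you should state this.

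The genuine gap is the second arrow of (ii), the non-reversibility of $\mathbf{UT}(\aleph_0,\aleph_0,\mathrm{cuf})\rightarrow\mathbf{CMC}_\omega$, which you leave as an open programme. You do not need to build a new permutation model and transfer; the paper instead cites Truss' Model~I ($\mathcal{M}12(\aleph)$ in \cite{hr}), which is already a $\mathbf{ZF}$-model. There Form~231 holds (the union of a well-ordered family of well-orderable sets is well-orderable), so $\mathbf{CAC}_\omega$ and hence $\mathbf{CMC}_\omega$ hold; on the other hand $\aleph_1$ is singular in that model, so $\omega_1$ is a countable union of countable sets but certainly not a cuf set (a cuf set of ordinals is countable), whence $\mathbf{UT}(\aleph_0,\aleph_0,\mathrm{cuf})$ fails. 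This disposes of the arrow directly, without any Fraenkel--Mostowski construction.
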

\begin{proof}
Let us consider Truss' Model I denoted by $\mathcal{M}12(\aleph)$ in \cite{hr} (see also \cite{tr0}). It is known that it is true in $\mathcal{M}12(\aleph)$ that, for every well-ordered family $\mathcal{W}$ of well-orderable sets, the union $\bigcup\mathcal{W}$ is well-orderable (see \cite[Form 231 and p. 155]{hr}). This implies that  $\mathbf{CAC}_{\omega}$ is true in $\mathcal{M}12(\aleph)$; thus, $\mathbf{CMC}_{\omega}$ is also true in $\mathcal{M}12(\aleph)$. It is known that $\aleph_{1}$ (that is, the well-ordered cardinal $\omega_1$) is singular in $\mathcal{M}12(\aleph)$, which implies that $\mathbf{UT}(\aleph_0, \aleph_0, cuf)$ is false in $\mathcal{M}12(\aleph)$. Hence, the second implication of Theorem \ref{s2t17}(ii) is false in $\mathcal{M}12(\aleph)$. This means that $\mathbf{CMC}_{\omega}$ does not imply $\mathbf{UT}(\aleph_0, \aleph_0, cuf)$ in $\mathbf{ZF}$.

To prove that none of the other implications of Theorem \ref{s2t17} are reversible in $\mathbf{ZF}$, let us consider the permutation model  $\mathcal{N}_{\mathbb{R}}$ described in Definition \ref{s3d8}. We use the same notation, as in Definition \ref{s3d8}.  

Now, let us modify and clarify an idea that appeared in \cite[proof of Theorem 14 (ii)]{kt}. For every $n\in\omega$, let $X_n=A_{n}\cup\{2n, 2n+1\}$ and also let $\prec_n$ be the binary relation on $X_{n}$ defined by: $2n\prec_n x\prec_n 2n+1$ for all $x\in A_{n}$ and, for $x,y\in A_{n}$, $x\prec_n y$ if and only if $x<_{n} y$. Then $\preceq_{n}=\prec_{n}\cup\{\langle x,x\rangle:x\in X_{n}\}$ is a linear ordering on $X_n$ since $\leq_n$ is a linear ordering on $A_n$. Moreover, $\preceq_{n}\in\mathcal{N}_{\mathbb{R}}$ because $\sym_{\mathcal{G}}(\preceq_{n})=\mathcal{G}\in\mathcal{F}$.

For every $n\in\omega$, let $\mathcal{T}_{n}$ be the order topology on $X_n$ induced by the ordering $\preceq_{n}$ on $X_{n}$. Then, for every $n\in\omega$,  $\mathcal{T}_{n}\in\mathcal{N}_{\mathbb{R}}$ because $\sym_{\mathcal{G}}(\mathcal{T}_{n})=\mathcal{G}\in\mathcal{F}$. Furthermore, as $\fix_{\mathcal{G}}(A_{n})\subseteq\fix_{\mathcal{G}}(\langle X_{n},\mathcal{T}_{n}\rangle)$, it follows that, for every $n\in\omega$, $\mathbf{X}_{n}=\langle X_n,\mathcal{T}_{n}\rangle$ is, in $\mathcal{N}_{\mathbb{R}}$, homeomorphic to the closed interval $[2n,2n+1]$ as a subspace of $\mathbb{R}$. Hence, for every $n\in\omega$,  $\mathbf{X}_{n}$ is a compact, separable, metrizable space in $\mathcal{N}_{\mathbb{R}}$.

If the product space $\mathbf{X}=\prod\limits_{n\in\omega}\mathbf{X}_{n}$ were metrizable in $\mathcal{N}_{\mathbb{R}}$, then, by \cite[Theorem 7]{chhkr0}, the direct sum $\bigoplus\limits_{n\in\omega}\mathbf{X}_n$ would be metrizable in $\mathcal{N}_{\mathbb{R}}$. But then, the direct sum $\bigoplus\limits_{n\in\omega}\langle A_n,\tau_n\rangle$ would be metrizable in $\mathcal{N}_{\mathbb{R}}$, contradicting Proposition \ref{s3cl2}. Thus $\mathbf{X}$ is not metrizable in $\mathcal{N}_{\mathbb{R}}$. Hence $\mathbf{CPM}_{le}(CS,M)$ is false in $\mathcal{N}_{\mathbb{R}}$. 

In the light of Remark \ref{s3r2}(ii), there is no function $f\in\mathcal{N}_{\mathbb{R}}$ whose domain is some infinite $\mathcal{B}\subseteq\mathcal{A}$ such that, for every $B\in\mathcal{B}$, $f(B)$ is a non-empty countable subset of $B$. This implies that $\mathbf{X}$ is not separable in $\mathcal{N}_{\mathbb{R}}$ and, in consequence, $\mathbf{CPM}_{le}(CS,S)$ is false in $\mathcal{N}_{\mathbb{R}}$. 

Finally, in view of Lemma \ref{s317}, the statement: 
$$\Phi=\mathbf{CUC}\wedge\neg\mathbf{CPM}_{le}(CS,M)\wedge\neg\mathbf{CPM}_{le}(CS,S)$$
is equivalent to a conjunction of injectively boundable statements. Since $\mathbf{\Phi}$ has a permutation model, it follows from Theorem \ref{thm:Pinc} that $\mathbf{\Phi}$ has a $\mathbf{ZF}$-model. Since $\mathbf{CUC}$ implies each of $\mathbf{CUC}_{\omega}$, $\mathbf{UT}(\aleph_0, \aleph_0, cuf)$ and $\mathbf{CAC}_{fin}$ in $\mathbf{ZF}$, it follows that, in every $\mathbf{ZF}$-model for $\mathbf{\Phi}$, the reverse implications to (i)--(iv) of Theorem \ref{s2t17}, respectively, are all false.  This completes the proof of the theorem.  
\end{proof}

\begin{corollary}
\label{s3c11}
The reverse implications to, respectively, (i)--(iv) of Theorem \ref{s2t17} are all independent of $\mathbf{ZF}$.
\end{corollary}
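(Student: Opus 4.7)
The plan is to observe that the corollary is a direct packaging of Theorem \ref{s3t10} together with the triviality that every form of choice under consideration is a theorem of $\mathbf{ZFC}$. Recall that a statement $\varphi$ is independent of $\mathbf{ZF}$ precisely when there is a $\mathbf{ZF}$-model in which $\varphi$ holds and another in which $\neg\varphi$ holds. Enumerating the five reverse implications (the reverses of (i), of each of the two implications in (ii), of (iii), and of (iv)) shows that each is of the form $\alpha\to\beta$ with $\alpha\in\{\mathbf{CUC},\mathbf{UT}(\aleph_0,\aleph_0,cuf),\mathbf{CMC}_\omega\}$ and $\beta$ one of $\mathbf{CPM}_{le}(C,S)$, $\mathbf{CPM}_{le}(CS,M)$, $\mathbf{UT}(\aleph_0,\aleph_0,cuf)$, $\mathbf{CPM}_{le}(CS,M)\wedge\mathbf{CUC}_{fin}$, or $\mathbf{CPM}_{le}(CS,MS)$.

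For the non-provability half, I would simply read off from the proof of Theorem \ref{s3t10} that: the Truss model $\mathcal{M}12(\aleph)$ satisfies $\mathbf{CMC}_\omega\wedge\neg\mathbf{UT}(\aleph_0,\aleph_0,cuf)$, refuting the reverse of the second implication in (ii); and the $\mathbf{ZF}$-model for $\mathbf{\Phi}=\mathbf{CUC}\wedge\neg\mathbf{CPM}_{le}(CS,M)\wedge\neg\mathbf{CPM}_{le}(CS,S)$ obtained there via Theorem \ref{thm:Pinc} falsifies the remaining four reverse implications, since $\mathbf{CUC}$ implies both $\mathbf{UT}(\aleph_0,\aleph_0,cuf)$ and $\mathbf{CUC}_{fin}$ in $\mathbf{ZF}$, while $\neg\mathbf{CPM}_{le}(CS,S)$ yields $\neg\mathbf{CPM}_{le}(C,S)$ and $\neg\mathbf{CPM}_{le}(CS,MS)$ and $\neg\mathbf{CPM}_{le}(CS,M)$ yields itself. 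Thus each reverse implication is false in some $\mathbf{ZF}$-model and therefore not provable in $\mathbf{ZF}$.

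For the consistency half, I would invoke any $\mathbf{ZFC}$-model: in $\mathbf{ZFC}$ all of $\mathbf{CPM}_{le}(C,S)$, $\mathbf{CPM}_{le}(CS,M)$, $\mathbf{CPM}_{le}(CS,MS)$, $\mathbf{CUC}$, $\mathbf{UT}(\aleph_0,\aleph_0,cuf)$, $\mathbf{CMC}_\omega$ and $\mathbf{CUC}_{fin}$ are theorems (the metric/product facts coming from Theorem \ref{s1t19} and the classical fact that countable products of separable metric spaces are separable metric spaces). Hence every implication between any pair of these sentences holds vacuously in $\mathbf{ZFC}$, and in particular each of the five reverse implications holds in every $\mathbf{ZFC}$-model, so is consistent with $\mathbf{ZF}$. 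Combining the two halves gives the independence of each reverse implication from $\mathbf{ZF}$. There is no genuine obstacle here since all the substantive work—the construction and analysis of $\mathcal{N}_\mathbb{R}$, the Pincus transfer, and the auxiliary topological arguments—has already been carried out in Theorem \ref{s3t10}.
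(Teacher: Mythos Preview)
Your proposal is correct and follows essentially the same route as the paper: invoke Theorem \ref{s3t10} for the non-provability direction and observe that in any model of $\mathbf{ZFC}$ all of the reverse implications hold, giving consistency. The only difference is that you spell out in detail which $\mathbf{ZF}$-model refutes which reverse implication, whereas the paper's proof simply says ``apply Theorem \ref{s3t10}'' and leaves those details implicit in the proof of that theorem.
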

\begin{proof}
It suffices to apply Theorem \ref{s3t10} and notice that the reverse implications to, respectively, (i)--(iv) of Theorem \ref{s2t17} are all true in every model of $\mathbf{ZFC}$.
\end{proof}

\section{A new permutation model for the conjunction of $\mathbf{M}(C(\not\leq\aleph_{0}),\geq 2^{\aleph_0})\wedge\neg\mathbf{CMC}_{\omega}$ with some forms of type $\mathbf{CPM}_{le}(\square, \square)$}
\label{s4}
\subsection{Motivation}
\label{s4.1}
In Section \ref{s4}, we pay special attention to the new form $\mathbf{M}(C(\not\leq\aleph_{0}), \geq 2^{\aleph})$ (see Definition \ref{s1d7}) which follows from $\mathbf{M}(C, S)$ by the following simple proposition:

\begin{proposition}
\label{s4p1}
$(\mathbf{ZF})$ $\mathbf{M}(C,S)\rightarrow\mathbf{M}(C(\not\leq\aleph_{0}),\geq 2^{\aleph_{0}})\rightarrow\mathbf{CAC}_{fin}$.
\end{proposition}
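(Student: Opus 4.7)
The plan is to handle the two implications separately.

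For the second implication, assume $\mathbf{M}(C(\not\leq\aleph_{0}),\geq 2^{\aleph_{0}})$; by the equivalence with (b) of Remark \ref{s1r5}(i), it suffices to produce a partial choice function for an arbitrary disjoint denumerable family $\{A_{n}:n\in\omega\}$ of non-empty finite sets (disjointness is without loss of generality, by replacing $A_{n}$ with $\{n\}\times A_{n}$). Put $Y=\bigcup_{n\in\omega}A_{n}$, fix a point $\infty\notin Y$, and equip $\mathbf{Y}=Y\cup\{\infty\}$ with the metric
\[
d(x,\infty)=\tfrac{1}{n+1}\ (x\in A_{n}),\qquad d(x,y)=d(x,\infty)+d(y,\infty)\ (\text{distinct }x,y\in Y),
\]
so that $\mathbf{Y}$ is explicitly the one-point Hausdorff compactification of the discrete space on $Y$, hence a compact metrizable space constructed in $\mathbf{ZF}$ without any appeal to choice. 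If $Y$ is countable, any enumeration of $Y$ immediately yields a full choice function for $\{A_{n}\}$. Otherwise $\mathbf{Y}$ is uncountable, so by hypothesis $|\mathbf{Y}|\geq 2^{\aleph_{0}}$; fix an injection $h:\mathbb{R}\to\mathbf{Y}$. For each $n\in\omega$, the preimage $R_{n}=h^{-1}(A_{n})$ is finite because $A_{n}$ is finite and $h$ is injective, and the $R_{n}$ together with $h^{-1}(\{\infty\})$ cover $\mathbb{R}$, so $N=\{n\in\omega:R_{n}\neq\emptyset\}$ is infinite. Defining $f(n)=h(\min R_{n})$ for $n\in N$ (minimum in the usual order on $\mathbb{R}$, which exists since $R_{n}$ is a non-empty finite subset of $\mathbb{R}$) yields the required partial choice function.

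For the first implication, let $\mathbf{X}$ be an uncountable compact metrizable space and assume $\mathbf{M}(C,S)$. By $\mathbf{M}(C,S)$ and Theorem \ref{s1t14}($a$), $\mathbf{X}$ is simultaneously separable, second-countable, and Loeb; fix a countable base $\{B_{n}:n\in\omega\}$, a compatible metric $d$, and a Loeb function $\ell$. The plan is to carry out a $\mathbf{ZF}$ Cantor--Bendixson analysis with this extra structure. Iterate $X^{(0)}=X$, $X^{(\alpha+1)}=X^{(\alpha)}\setminus\{\text{isolated points of }X^{(\alpha)}\}$, $X^{(\lambda)}=\bigcap_{\alpha<\lambda}X^{(\alpha)}$. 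At each successor stage the isolated points inject into $\omega$ via $x\mapsto\min\{n:B_{n}\cap X^{(\alpha)}=\{x\}\}$, so each discarded layer is countable; the open complements $X\setminus X^{(\alpha)}$ form a strictly increasing chain corresponding to subsets of $\omega$ via the base, and the map sending each successor ordinal to the minimum newly added index is an injection into $\omega$, bounding the Cantor--Bendixson rank by $\omega_{1}$ and terminating the iteration at a perfect kernel $P$. A canonical bookkeeping argument, using $\ell$ and the fixed base to avoid arbitrary selection, shows $P\neq\emptyset$ whenever $\mathbf{X}$ is uncountable. On the non-empty perfect closed set $P$, recursively construct a Cantor scheme $\{F_{s}:s\in 2^{<\omega}\}$ of non-empty closed subsets with $\delta_{d}(F_{s})\to 0$ and $F_{s\frown 0}\cap F_{s\frown 1}=\emptyset$, using $\ell$ to pick witness points and the base to separate them by disjoint closed neighbourhoods. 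Compactness then yields, for each $\alpha\in 2^{\omega}$, a unique point in $\bigcap_{n\in\omega}F_{\alpha\upharpoonright n}$, defining an injection $2^{\omega}\hookrightarrow X$ and hence $|\mathbf{X}|\geq 2^{\aleph_{0}}$.

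The chief obstacle is the first implication, where the classical Cantor--Bendixson decomposition must be carried out in $\mathbf{ZF}$ without assuming $\mathbf{CUC}$, so that the scattered part (a countable union of countable layers) is handled canonically and the perfect kernel is certified non-empty. The Loeb function together with the fixed countable base is precisely what makes every selection uniform and permits both the rank bound and the Cantor scheme to be built without arbitrary choices.
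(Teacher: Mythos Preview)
Your proof is correct. For the second implication you and the paper take essentially the same route: form the one-point Hausdorff compactification of the discrete union and use that an injection $\mathbb{R}\hookrightarrow\bigcup_n A_n\cup\{\infty\}$ exhibits $\mathbb{R}$ as a countable union of finite sets (the paper phrases this as a direct contradiction, you extract the partial choice function explicitly---these are the same argument).

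For the first implication the approaches differ in presentation rather than substance. The paper simply cites the fact (from \cite{kk2}) that in $\mathbf{ZF}$ every uncountable separable compact metrizable space has size $2^{\aleph_0}$, whereas you unpack the Cantor--Bendixson plus Cantor-scheme argument explicitly. Your sketch is sound, but the phrase ``a canonical bookkeeping argument'' deserves one more sentence of precision: what actually makes the scattered part countable is that each layer $L_\alpha=X^{(\alpha)}\setminus X^{(\alpha+1)}$ carries a \emph{uniformly defined} injection into $\omega$ via the fixed base (send $x$ to the least $n$ with $B_n\cap X^{(\alpha)}=\{x\}$), so that $X\setminus P$ injects into $\gamma\times\omega$ for the single countable ordinal $\gamma$ equal to the rank; no form of $\mathbf{CUC}$ is invoked. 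The Loeb function $\ell$ is not needed at this step---only the base is---and enters only in the Cantor-scheme construction on $P$. With that clarification your argument is self-contained, while the paper's is shorter but relies on an external reference.
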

\begin{proof}
The first implication is straightforward since every uncountable, separable, compact metrizable space is equipotent to $\mathbb{R}$ (cf. \cite{kk2}). 
 
For the second implication, assume $\mathbf{M}(C(\not\leq\aleph_{0}),\geq 2^{\aleph_{0}})$. Let $\mathcal{A}=\{A_{n}:n\in\omega\}$ be a denumerable disjoint family of non-empty finite sets. By way of contradiction, assume $\mathcal{A}$ has no choice function.

Let $A=\bigcup\mathcal{A}$ and $\mathbf{A}=\langle A,\mathcal{P}(A)\rangle$. Consider the one-point Hausdorff compactification $\mathbf{A}(\infty)$ of $\mathbf{A}$. Then $\mathbf{A}(\infty)$ has a cuf base, so it is metrizable by Theorem \ref{s1t15}(ii).  Suppose that $\mathcal{A}$ has no choice function. Then $A$ is uncountable. Thus, by $\mathbf{M}(C(\not\leq\aleph_{0}),\geq 2^{\aleph_{0}})$,  $|\mathbb{R}|\leq |A|$. This implies that $\mathbb{R}$ is a cuf set. But this is impossible since every cuf subset of $\mathbb{R}$ is countable. Hence, $\mathcal{A}$ has a choice function.   
\end{proof}

It is known from \cite{ktw1} that the first implication of Proposition \ref{s4p1} is not reversible in $\mathbf{ZF}$.  We do not know a satisfactory answer to the following questions:
\begin{question}
\label{s4q1}
\begin{enumerate}
\item[(i)] Are $\mathbf{M}(C(\not\leq\aleph_{0}),\geq 2^{\aleph_0})$ and $\mathbf{M}(C, S)$  equivalent in $\mathbf{ZF}$?
\item[(ii)] Does $\mathbf{CAC}_{fin}$ imply $\mathbf{M}(C(\not\leq\aleph_{0}),\geq 2^{\aleph_0})$ in $\mathbf{ZF}$?
\end{enumerate}
\end{question} 

In \cite[Section 6, Problem 2]{ktw1}, it is asked whether or not $\mathbf{M}(C,S)$ implies $\mathbf{CUC}$ in $\mathbf{ZF}$. It is also an open problem whether $\mathbf{M}(C,S)$ implies the weaker principle $\mathbf{CAC}_{\omega}$. Although we are still unable to solve these open problems, we can shed more light on them by showing that the conjunction $\mathbf{M}(C(\not\leq\aleph_{0}),\geq 2^{\aleph_0})\wedge\neg\mathbf{CAC}_{\omega}$ has a permutation model.

Let us define statements $\mathbf{\Phi}_4$ and $\mathbf{\Phi}_5$ as follows:
$$\mathbf{\Phi}_4=  \mathbf{IDI}\wedge\mathbf{WOAC}_{fin}\wedge\mathbf{M}(C(\not\leq\aleph_{0}),\geq 2^{\aleph_{0}}),$$
$$\mathbf{\Phi}_5=\neg\mathbf{CAC}(C, \mathbf{M}_{le})\wedge\neg\mathbf{CPM}_{le}(C,C)\wedge\neg\mathbf{CMC}_{\omega}.$$ 

In the subsequent Section \ref{s4.3}, we construct a \emph{new permutation model} $\mathcal{N}_{\mathbf{2}^{\omega}}$ in which the conjunction $\mathbf{\Phi}_4\wedge\mathbf{\Phi}_5$ is true.  Since, by Proposition \ref{s4p1}, $\mathbf{M}(C(\not\leq\aleph_{0}),\geq 2^{\aleph_{0}})$ is formally weaker than $\mathbf{M}(C,S)$, the result that there is a permutation model for $\mathbf{\Phi}_4\wedge\mathbf{\Phi}_5$ is a substantial action towards the resolution of \cite[Problem 2]{ktw1} and can be considered as a \emph{partial answer} to the question posed in \cite[Problem 2]{ktw1}. 

\begin{remark}
\label{s4r3} 
Let $\mathcal{N}$ be a permutation model in which $\mathbf{CMC}_{\omega}$ is false. Then $\mathbf{CUC}$ is false in $\mathcal{N}$; thus,  it follows from Theorem \ref{s2t17} that $\mathbf{CPM}_{le}(C,S)$ and $\mathbf{CPM}_{le}(CS, MS)$ are also false in $\mathcal{N}$. In particular, $\mathbf{CPM}_{le}(CS,2)$, being equivalent to $\mathbf{CPM}_{le}(CS, MS)$ by Theorem \ref{s2t2}(viii), is false in $\mathcal{N}$. Hence, for our promised new model $\mathcal{N}_{\mathbf{2}^{\omega}}$  we will have:
$$\mathcal{N}_{\mathbf{2}^{\omega}}\models (\mathbf{\Phi}_4\wedge\mathbf{\Phi}_5\wedge\neg\mathbf{CPM}_{le}(C,S)\wedge\neg\mathbf{CPM}_{le}(CS,2)).$$
\end{remark} 
 
\subsection{Properties of Cantor spaces as tools}
\label{s4.2}

In Section \ref{s4.3}, we will apply Cantor spaces to our new permutation model $\mathcal{N}_{\mathbf{2}^{\omega}}$.  Let us recall that a \emph{Cantor space} is a non-empty, compact, second-countable, dense-in-itself, zero-dimensional Hausdorff space.  An important $\mathbf{ZF}$-example of a Cantor space is the Cantor cube $\mathbf{2}^{\omega}$.  The following theorem is part of the folklore:

\begin{theorem}
\label{thm:prel1}
$(\mathbf{ZF})$ Any two Cantor spaces are homeomorphic. In particular, any two non-empty, clopen subsets of a Cantor space are homeomorphic.
\end{theorem}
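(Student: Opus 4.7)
The plan is to prove directly that every Cantor space $\mathbf{X}=\langle X,\tau\rangle$ is homeomorphic to the Cantor cube $\mathbf{2}^{\omega}$; the first assertion then follows by composing two such homeomorphisms, and the ``in particular'' clause follows because all the defining properties of a Cantor space (non-emptiness, compactness, second-countability, being dense-in-itself, zero-dimensionality, Hausdorffness) are inherited by any non-empty clopen subspace of a Cantor space.

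Since $\mathbf{X}$ is second-countable and $T_3$, Theorem \ref{s1t15}(i) yields a compatible metric $d$ on $X$. Using zero-dimensionality and second-countability I fix a countable base $\{B_n:n\in\omega\}$ consisting of clopen sets; compactness forces every clopen subset of $\mathbf{X}$ to be a finite union of $B_n$'s, so the family of all non-empty clopen subsets of $\mathbf{X}$ is countable and admits a canonical enumeration $\{V_n:n\in\omega\}$ read off from $\{B_n\}$. I then construct, by recursion on $n\in\omega$, a binary tree $\{C_s:s\in 2^{<\omega}\}$ of non-empty clopen subsets of $X$ with $C_\emptyset=X$, $C_s=C_{s^\frown 0}\sqcup C_{s^\frown 1}$, and $\delta_d(C_s)<2^{-|s|}$ for $s\ne\emptyset$. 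At the inductive step, each $C_s$ at level $n$ is itself a Cantor space; the basic clopens $B_i$ with $\delta_d(B_i)<2^{-(n+1)}$ that meet $C_s$ cover it, so by compactness and the fixed enumeration I obtain a canonical least-index finite subcover, which I disjointify inside $C_s$ to a finite clopen partition of $C_s$ by pieces of diameter $<2^{-(n+1)}$. If this partition has at least two pieces I take $C_{s^\frown 0}$ to be the first piece and $C_{s^\frown 1}$ the union of the rest; if the partition is trivial, i.e.\ $\delta_d(C_s)$ is already $<2^{-(n+1)}$, I use perfectness and zero-dimensionality to set $C_{s^\frown 0}=V_p$ for the least $p$ with $\emptyset\ne V_p\subsetneq C_s$, and $C_{s^\frown 1}=C_s\setminus V_p$.

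With the tree in hand, I define $h:2^{\omega}\to X$ by letting $h(\alpha)$ be the unique point of $\bigcap_{n\in\omega}C_{\alpha\upharpoonright n}$; this intersection is non-empty by compactness of $\mathbf{X}$ applied to the decreasing sequence of non-empty closed sets, and it is a singleton since $\delta_d(C_{\alpha\upharpoonright n})\to 0$. Distinct branches $\alpha\ne\beta$ have $C_{\alpha\upharpoonright n}\cap C_{\beta\upharpoonright n}=\emptyset$ for some $n$, which gives injectivity; each $x\in X$ lies in a unique $C_s$ at every level, which gives surjectivity; and $h^{-1}[C_s]$ equals the basic clopen $[s]=\{\alpha\in 2^\omega:\alpha\upharpoonright|s|=s\}$ for every $s\in 2^{<\omega}$, so, since the $C_s$'s form a base for $\tau$ by the diameter condition, $h$ is a clopen-to-clopen bijection between bases and hence a homeomorphism. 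The principal obstacle I anticipate is in making the recursive splitting step legitimate in $\mathbf{ZF}$: every selection (the finite subcover, the disjointification, the labelling of which half is $C_{s^\frown 0}$) must be read off deterministically from the fixed enumerations of $\{B_n\}$ and $\{V_n\}$, so as to avoid a hidden appeal to $\mathbf{CAC}$ when picking a splitting at each of infinitely many stages.
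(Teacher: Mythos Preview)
The paper does not prove this theorem; it is stated as folklore with no argument given. Your strategy---construct a dyadic tree $\{C_s:s\in 2^{<\omega}\}$ of non-empty clopen sets with diameters tending to zero along every branch, then read off a homeomorphism $h:2^{\omega}\to X$---is the standard one, and your insistence on reading every choice off the fixed enumeration of the (countable) clopen algebra is exactly what keeps the recursion honest in $\mathbf{ZF}$. The ``in particular'' clause is handled correctly.

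There is, however, a genuine gap in the inductive step. You claim $\delta_d(C_s)<2^{-|s|}$ for every $s\ne\emptyset$, but your rule does not secure this on the right-hand child: when the canonical small-diameter partition of $C_s$ is $\{P_1,\ldots,P_k\}$ with $k\ge 2$, you set $C_{s^\frown 1}=P_2\cup\cdots\cup P_k$, and that union can have diameter as large as $\delta_d(C_s)$ itself. Concretely, take $X=2^{\omega}$ with the usual ultrametric and the cylinder base enumerated by length and then lexicographically; at every stage the remainder $C_{1^n}$ still contains points disagreeing in coordinate $0$, so $\delta_d(C_{1^n})=1$ for all $n$, the intersection $\bigcap_n C_{1^n}$ is not a singleton, and $h$ is undefined on the branch $\alpha\equiv 1$. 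One clean repair, still entirely choice-free thanks to your enumeration: first build a sequence of finite clopen partitions $\mathcal{P}_0\prec\mathcal{P}_1\prec\cdots$ of $X$ with $\mathrm{mesh}(\mathcal{P}_n)<2^{-n}$ and every cell of $\mathcal{P}_n$ splitting into at least two cells of $\mathcal{P}_{n+1}$ (perfectness gives the nontrivial splits); then code this finitely-branching tree into $2^{<\omega}$ by peeling off one child at a time, so that a $k$-fold split occupies $k-1$ binary levels before the next $\mathcal{P}$-refinement is invoked. With this bookkeeping every branch reaches a single cell of each $\mathcal{P}_n$ after finitely many steps, diameters along branches tend to $0$, and the rest of your argument goes through verbatim.
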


By the compactness of $\mathbf{2}^{\omega}$, every clopen subset of $\mathbf{2}^{\omega}$ is a finite union of basic clopen sets of the form $[p]=\{f\in 2^{\omega}:p\subset f\}$, where $p\in 2^{F}$ for some finite $F\subsetneq\omega$. Thus, the collection of all clopen subsets of $\mathbf{2}^{\omega}$ is countable. Furthermore, $\Aut(\mathbf{2}^{\omega})$ is isomorphic to the automorphism group of the countable atomless Boolean algebra of all clopen subsets of $\mathbf{2}^{\omega}$. 

That $H$ is a subgroup of a group $G$ is denoted by $H\leq G$.

The following two theorems are of special importance for our reasoning in Section \ref{s4.3}:
\begin{theorem}
\label{thm:prel2}
(Anderson, \cite[Corollary 2]{A}.) The group $\Aut(\mathbf{2}^{\omega})$ is simple.
\end{theorem}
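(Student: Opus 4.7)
The plan is to follow Anderson's strategy, exploiting the exceptional homogeneity of $\mathbf{2}^{\omega}$. The two key ingredients are Theorem \ref{thm:prel1}, which says that any two non-empty clopen subsets of $\mathbf{2}^{\omega}$ are homeomorphic, and the easy consequence that $\Aut(\mathbf{2}^{\omega})$ acts transitively on ordered clopen partitions of $2^{\omega}$ of each fixed finite length into non-empty clopen pieces. Together these give a transport mechanism: any automorphism supported in a clopen set $U$ can be conjugated into an automorphism supported in any prescribed non-empty clopen set $V$, which will be applied repeatedly to move conjugates of a chosen non-trivial element of the normal subgroup into any position we like.

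Fix a non-trivial normal subgroup $N \trianglelefteq \Aut(\mathbf{2}^{\omega})$ and pick $f \in N \setminus \{e\}$. By continuity there is a non-empty clopen $U$ with $f(U) \cap U = \emptyset$, so $f$ displaces $U$ off itself. Conjugating $f$ by transport automorphisms, $N$ contains, for every non-empty clopen $V$, an element whose entire non-trivial behavior is confined to $V$ and still displaces some clopen subset of $V$ off itself. Now let $h \in \Aut(\mathbf{2}^{\omega})$ be arbitrary; the aim is to show $h \in N$. I would choose a countable partition $2^{\omega} = \bigsqcup_{n \in \omega} V_n$ into non-empty clopen sets, each homeomorphic to $\mathbf{2}^{\omega}$, and, using the transport principle, arrange $h$ to be conjugate to an element whose action is adapted to this partition. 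Anderson's infinite swindle then produces two automorphisms $a, b \in \Aut(\mathbf{2}^{\omega})$, each realized as an infinite product of conjugates of $f^{\pm 1}$ with pairwise disjoint supports sitting in clopen pieces of the correct type (hence honest elements of $\Aut(\mathbf{2}^{\omega})$, and of $N$ by normality), such that the commutator $[a,b]=aba^{-1}b^{-1}$ equals $h$ after telescoping the shift-cancel pattern across the pieces $V_n$. Hence $h \in N$, and as $h$ was arbitrary, $N = \Aut(\mathbf{2}^{\omega})$.

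The main obstacle is the concrete execution of the swindle: one must design $a$ and $b$ so that the combinatorics of shifting the pieces $V_n$ telescope to give exactly $h$ on each $V_n$ simultaneously, while keeping $a$ and $b$ themselves expressible as products of conjugates of $f^{\pm 1}$ rather than as arbitrary homeomorphisms of the appropriate combinatorial shape. Here the compactness and zero-dimensionality of $\mathbf{2}^{\omega}$ guarantee that the infinite product of homeomorphisms with pairwise disjoint clopen supports is itself a homeomorphism, and the countability of the clopen algebra ensures that the entire construction goes through in $\mathbf{ZF}$ without any form of choice.
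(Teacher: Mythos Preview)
The paper does not prove this theorem at all; it simply cites it as Anderson's result (\cite[Corollary~2]{A}) and uses it as a black box in the proof of Claim~\ref{c:IDI_in_M}. There is therefore nothing in the paper to compare your argument against.

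Your sketch follows the standard strategy due to Anderson (and, more abstractly, the ``factorizable group'' paradigm). The broad outline is correct, but it is only a sketch, and two steps are stated more loosely than they deserve. First, from an $f\in N$ with $f(U)\cap U=\emptyset$ you do not get, by conjugation alone, elements of $N$ whose support is \emph{contained} in an arbitrary clopen $V$; what you actually use is the commutator trick: for any $g$ supported in $U$, the element $[f,g]=fgf^{-1}g^{-1}\in N$ acts as $g^{-1}$ on $U$ and as $fgf^{-1}$ on $f(U)$, and is the identity elsewhere. This is what shows $N$ contains elements realizing any prescribed autohomeomorphism on a given clopen piece (at the cost of junk on a disjoint piece), and transport then moves this to any $V$. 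Second, the ``swindle'' you invoke must be executed so that $h$ is expressed as a product of finitely many elements of $N$ (in Anderson's argument, a fixed small number of conjugates of $f^{\pm 1}$ and commutators thereof), not merely as an infinite telescoping product; the disjoint-support infinite products are used to build each of those finitely many factors, not $h$ itself. You flag this as the main obstacle, which is accurate; filling it in is essentially reproducing Anderson's paper. Since the present paper provides no proof, your write-up already exceeds what the paper offers, but for a self-contained argument you should cite or reproduce \cite{A} in full.
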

\begin{theorem}
\label{thm:prel3}
(Truss, \cite[Theorem 3.7 and its proof, Corollary 3.8]{tr}.) If $H$ is a subgroup of $\Aut(\mathbf{2}^{\omega})$ of index $<|\mathbb{R}|$, then for some finite partition $F$ of ${2}^{\omega}$ into clopen sets of $\mathbf{2}^{\omega}$, $\fix_{\Aut(\mathbf{2}^{\omega})}(F){\leq H}{\leq\fix_{\Aut(\mathbf{2}^{\omega})}(\{F\})}$.
\end{theorem}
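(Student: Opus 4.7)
My plan is to prove the sandwich in two stages: first establish a Small Index Property (SIP) for $G=\Aut(\mathbf{2}^{\omega})$, and then use it together with an orbit-refinement argument to produce the required partition $F$.

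For SIP, I would show that any subgroup $H\leq G$ with $[G:H]<|\mathbb{R}|$ is open in the natural Polish topology on $G$ (pointwise convergence on the countable Boolean algebra $B$ of clopen subsets of $\mathbf{2}^{\omega}$); equivalently, $H\supseteq\fix_G(F_0)$ for some finite clopen partition $F_0$. The approach is a binary-tree coset construction: assuming toward contradiction that no $\fix_G(F)$ is contained in $H$, I would inductively build elements $\phi_s\in G$ indexed by $s\in 2^{<\omega}$ so that the $2^{\aleph_0}$ branches determine pairwise distinct cosets of $H$. Anderson's simplicity (Theorem~\ref{thm:prel2}) furnishes the freedom at each node: because the normal closure of any nontrivial element of $G$ is all of $G$, one can always find, within any prescribed $\fix_G(F)$, elements realizing prescribed differences modulo $H$.

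With SIP in hand, fix $F_0$ with $\fix_G(F_0)\leq H$ and consider the $H$-orbit $\mathcal{O}=\{\phi(F_0):\phi\in H\}$, a subset of the countable $G$-orbit of $F_0$ in the action on finite clopen partitions. For each $\phi\in H$, using $\phi H\phi^{-1}=H$,
\[
\fix_G(\phi(F_0))=\phi\,\fix_G(F_0)\,\phi^{-1}\leq H.
\]
Provided $\mathcal{O}$ is finite, let $F$ be its common refinement; then $F$ is a finite clopen partition with
\[
\fix_G(F)=\bigcap_{F'\in\mathcal{O}}\fix_G(F')\leq H,
\]
and since $H$ permutes $\mathcal{O}$ by definition, it also permutes the parts of the common refinement $F$, giving $H\leq\fix_G(\{F\})$, as required.

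The main obstacle is the SIP step, which requires the careful tree construction and genuine use of simplicity; this is the deepest part of the proof and is what Truss's Theorem~3.7 provides. A secondary obstacle is verifying that $\mathcal{O}$ can be taken finite, as a priori it could be countably infinite. I would resolve this by an analogous tree argument: if iterating $F_0\mapsto F_0\vee\phi(F_0)$ over $\phi\in H$ never stabilized, then the Boolean subalgebra of $B$ generated by $\bigcup_{\phi\in H}\phi(F_0)$ would be infinite and $H$-invariant, and a coset-counting construction parallel to that used for SIP would again manufacture $2^{\aleph_0}$ distinct cosets of $H$, contradicting $[G:H]<|\mathbb{R}|$. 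Hence the iteration terminates, producing a finite $H$-invariant refinement of $F_0$ whose atoms serve as the desired $F$.
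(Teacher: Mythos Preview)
The paper does not prove this theorem; it is quoted as a result of Truss (Theorem~3.7 and Corollary~3.8 of \cite{tr}) and used as a black box in Section~\ref{s4}. So there is no proof in the paper to compare against. That said, your two-stage outline---first the Small Index Property, then a refinement step to pin $H$ between $\fix_G(F)$ and $\fix_G(\{F\})$---is exactly the shape of Truss's argument, and your identification of the SIP step as the deep part (requiring Anderson's simplicity) is correct.

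The genuine gap is in your second stage. You argue that if the Boolean subalgebra $B_H$ generated by $\bigcup_{\phi\in H}\phi(F_0)$ were infinite, then ``a coset-counting construction parallel to that used for SIP'' would produce $2^{\aleph_0}$ cosets of $H$. But the SIP tree argument works precisely because one assumes $H$ contains \emph{no} $\fix_G(F)$; here $H$ already contains $\fix_G(F_0)$, so that mechanism is unavailable, and it is not clear what the branching at each node would be or why distinct branches would land in distinct $H$-cosets. In fact nothing prevents $B_H$ from being infinite while $[G:H]$ remains countable (e.g.\ $H=G$ itself), so the infinitude of $B_H$ alone cannot yield a contradiction.

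The route Truss actually takes for the sandwich is a minimality argument: among all finite clopen partitions $F$ with $\fix_G(F)\le H$, pick one with $|F|$ minimal. For any $\phi\in H$ one has $\fix_G(\phi(F))\le H$ as you observe, and the key group-theoretic lemma is that $\langle\fix_G(F_1),\fix_G(F_2)\rangle$ contains $\fix_G(F_1\wedge F_2)$, where $F_1\wedge F_2$ denotes the finest common \emph{coarsening}. Applying this with $F_1=F$, $F_2=\phi(F)$ gives $\fix_G(F\wedge\phi(F))\le H$; minimality of $|F|$ then forces $F\wedge\phi(F)=F=\phi(F)$, so $\phi\in\fix_G(\{F\})$. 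Establishing that lemma is where the homogeneity of $\mathbf{2}^\omega$ (Theorem~\ref{thm:prel1}) and ultimately simplicity are used. Your common-refinement approach goes in the opposite lattice direction and does not obviously terminate.
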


Let us note that, for $F\subseteq\mathcal{P}(2^{\omega})$, every $\phi\in\fix_{\Aut(\mathbf{2}^{\omega})}(F)=\{\psi\in\Aut(2^{\omega}):(\forall C\in F)(\psi(C)=C)\}$ fixes every element of $F$ setwise, and every $\phi\in\fix_{\Aut(\mathbf{2}^{\omega})}(\{F\})$ fixes $F$ setwise.

 \subsection{The permutation model for $\mathbf{\Phi}_4\wedge\mathbf{\Phi}_5$}
 \label{s4.3}
 
To define our permutation model $\mathbf{\Phi}_4\wedge\mathbf{\Phi}_5$, in what follows, according to Notation \ref{n2}, we assume that $\mathcal{M}$ is a fixed model of $\mathbf{ZFA+AC}$, $\mathcal{A}=\{A_n: n\in\omega\}$ is a denumerable disjoint family in $\mathcal{M}$ such that $A=\bigcup
\mathcal{A}$ is the set of atoms of $\mathcal{M}$, and $\mathcal{I}=\{E\subseteq A: (\exists S\in [\omega]^{<\omega})E\subseteq\bigcup_{n\in S}A_n\}$. Furthermore, we assume here that $\mathcal{M}$ is chosen so that it holds in $\mathcal{M}$ that, for every $n\in\omega$, $A_n=\{a_{n,f}: f\in 2^{\omega}\}$, $\tau_n$ is a topology on $A_n$ such that the mapping defined by $2^{\omega}\ni f\mapsto a_{n,f}$  is a homeomorphism of $\mathbf{2}^{\omega}$ onto $\mathbf{A}_n=\langle A_n, \tau_n\rangle$.  Then, for every $n\in\omega$, $\Aut(\mathbf{A}_n)$ is isomorphic to $\Aut(\mathbf{2}^{\omega})$ in $\mathcal{M}$.
Let $\mathcal{G}=\{\phi\in\Sym(A): (\forall n\in\omega)(\phi\upharpoonright A_{n}\in \Aut(\mathbf{A}_n))\}$. We denote by $\mathcal{F}$ the filter of subgroups of $\mathcal{G}$ generated by $\{\fix_{\mathcal{G}}(E): E\in\mathcal{I}\}$. 

\begin{definition}
$\mathcal{N}_{\mathbf{2}^{\omega}}$ is the permutation model determined by $\mathcal{M}$, $\mathcal{G}$ and the normal ideal $\mathcal{I}$ (equivalently, the normal filter $\mathcal{F}$).
\end{definition}

We are going to demonstrate a detailed proof that $\mathbf{\Phi}_4$ and $\mathbf{\Phi}_5$ are both true in $\mathcal{N}_{\mathbf{2}^{\omega}}$.
Since we also want our prospective readers obtain illuminating information and further ideas on permutation models and their techniques, we give two proofs for the validity of $\mathbf{IDI}$ in $\mathcal{N}_{\mathbf{2}^{\omega}}$; the first of these proofs uses Theorem \ref{thm:prel2}, while the second one uses Theorem \ref{thm:prel3} and appears useful for the proof of $\mathbf{M}(C(\not\leq\aleph_{0}),\geq 2^{\aleph_{0}})$ in $\mathcal{N}_{\mathbf{2}^{\omega}}$.
Let us point out that Truss' proof of his Theorem 3.7 in \cite{tr} (see Theorem \ref{thm:prel3}) uses Anderson's result (see Theorem \ref{thm:prel2}), and that our first proof (via Anderson's theorem) of $\mathbf{IDI}$ in $\mathcal{N}_{\mathbf{2}^{\omega}}$, leading also to an easier proof of $\mathbf{WOAC}_{fin}$ in $\mathcal{N}_{\mathbf{2}^{\omega}}$, is more direct and much simpler than the second one; hence, it is interesting in its own right.  

We would also like to note that the arguments of the proofs of $\mathbf{IDI}$ in $\mathcal{N}_{\mathbf{2}^{\omega}}$ have substantial differences from the corresponding ones for the models $\mathcal{N}_{\mathbb{R}}$ and $\mathcal{N}_{\mathbb{P}}$ (see Sections \ref{s3.2} and \ref{s3.3}) whose group-theoretic keypoints cannot be applied to $\mathcal{N}_{\mathbf{2}^{\omega}}$.

\begin{theorem}
\label{t:main}
$\mathcal{N}_{\mathbf{2}^{\omega}}\models(\mathbf{\Phi}_4\wedge\mathbf{\Phi}_5)$.
\end{theorem}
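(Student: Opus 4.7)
The plan is to verify separately each of the six clauses of $\mathbf{\Phi}_4\wedge\mathbf{\Phi}_5$ in $\mathcal{N}_{\mathbf{2}^{\omega}}$, leveraging the group-theoretic input of Theorems \ref{thm:prel2} and \ref{thm:prel3} about $\Aut(\mathbf{2}^{\omega})$. The denumerable family $\{\mathbf{A}_n: n\in\omega\}$ of pairwise disjoint Cantor copies is the principal concrete witness throughout.

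For the three negative forms, I would first show that $\{A_n:n\in\omega\}$ admits no choice function in $\mathcal{N}_{\mathbf{2}^{\omega}}$ via the standard support argument: any such $f$ has a support $E\subseteq\bigcup_{n\in S}A_n$ with $S\subsetneq\omega$ finite, and for any $n\in\omega\setminus S$ the transitivity of $\Aut(\mathbf{A}_n)$ on $A_n$ yields some $\phi\in\fix_{\mathcal{G}}(E)$ with $\phi(f(n))\neq f(n)$, a contradiction. This directly gives $\neg\mathbf{CAC}(C,\mathbf{M}_{le})$, and $\neg\mathbf{CPM}_{le}(C,C)$ then follows via Theorem \ref{s2t12}(i). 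For $\neg\mathbf{CMC}_{\omega}$ I would use the family $\{\mathcal{C}^{\ast}_n : n\in\omega\}$, where $\mathcal{C}^{\ast}_n$ is the countable set of proper nonempty clopen subsets of $\mathbf{A}_n$; each $\mathcal{C}^{\ast}_n$ is denumerable in $\mathcal{N}_{\mathbf{2}^{\omega}}$ with $A_n$ itself as support (and an enumeration inherited canonically from the basic clopens of $\mathbf{2}^{\omega}$). A putative multiple choice function $f$ with support $E$ would force $f(n)$, for $n\in\omega\setminus S$, to be a nonempty finite $\Aut(\mathbf{A}_n)$-invariant subset of $\mathcal{C}^{\ast}_n$; but $\Aut(\mathbf{A}_n)$ is transitive on proper nonempty clopens (any two such are Cantor spaces, and a pair of homeomorphisms $C\to C'$ and $A_n\setminus C\to A_n\setminus C'$ patches to an autohomeomorphism), so the only invariants are $\emptyset$ and $\mathcal{C}^{\ast}_n$, contradicting finiteness and non-emptiness of $f(n)$.

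For $\mathbf{IDI}$ and $\mathbf{WOAC}_{fin}$ I would mirror the arguments of Theorems \ref{s3cl3} and \ref{s3cl4} from the earlier models, substituting Theorem \ref{thm:prel2} or \ref{thm:prel3} for the Droste--Truss lemma (Theorem \ref{lem:dt}). The key group-theoretic step is that, for $s\in\omega\setminus S$, any proper subgroup of $\fix_{\mathcal{G}}(A\setminus A_s)\cong\Aut(\mathbf{A}_s)$ arising as the stabilizer of a finite orbit must have index $<2^{\aleph_{0}}$, hence by Theorem \ref{thm:prel3} comes from a nontrivial finite clopen partition of $A_s$; simplicity (Theorem \ref{thm:prel2}) or a direct inspection of how such partition-stabilizers act on single points then furnishes the contradiction. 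For $\mathbf{IDI}$ I would give the two proofs promised by the authors: the simpler one uses simplicity of $\Aut(\mathbf{A}_n)$ to produce an infinite $\fix_{\mathcal{G}}(E)$-orbit inside any infinite $X\in\mathcal{N}_{\mathbf{2}^{\omega}}$ (from which a denumerable subset can be extracted), while the other applies Theorem \ref{thm:prel3} to enumerate orbit types through canonical finite-partition data.

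The clause $\mathbf{M}(C(\not\leq\aleph_{0}),\geq 2^{\aleph_{0}})$ is the main obstacle. Given $\mathbf{X}\in\mathcal{N}_{\mathbf{2}^{\omega}}$ compact metrizable with $|X|>\aleph_{0}$ and support $E\subseteq\bigcup_{n\in S}A_n$, every $\phi\in\fix_{\mathcal{G}}(E)$ is an autohomeomorphism of $\mathbf{X}$, and $X$ decomposes into $\fix_{\mathcal{G}}(E)$-orbits. Applying Theorem \ref{thm:prel3} coordinatewise to each factor $\Aut(\mathbf{A}_n)$ (for $n\in\omega\setminus S$) shows that every orbit has size either finite or $\geq 2^{\aleph_{0}}$, with no intermediate possibility. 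In the large-orbit case, the explicit coset-space description furnished by Theorem \ref{thm:prel3} produces an injection $\mathbb{R}\to\Orb(x)\subseteq X$ that is $\fix_{\mathcal{G}}(E)$-equivariant and therefore lies in $\mathcal{N}_{\mathbf{2}^{\omega}}$. In the all-finite-orbit case, each orbit is characterized by a finite tuple of clopen partitions drawn from a countable pool (finitely many coordinates with a nontrivial partition, each partition selected from a countable set of finite clopen partitions of the corresponding $A_n$), so there are only countably many orbit types, and $\mathbf{WOAC}_{fin}$ (established above) applied to this well-orderable family of finite orbits yields $|X|\leq\aleph_{0}$, contradicting uncountability. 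The delicate step is to ensure that in the large-orbit case the injection genuinely resides in $\mathcal{N}_{\mathbf{2}^{\omega}}$ and not merely in the ground model, and it is precisely the explicit structure theorem of Truss (Theorem \ref{thm:prel3}) on small-index subgroups of $\Aut(\mathbf{2}^{\omega})$ that makes this step work.
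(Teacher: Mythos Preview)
Your treatment of the five clauses other than $\mathbf{M}(C(\not\leq\aleph_{0}),\geq 2^{\aleph_{0}})$ is essentially correct and follows the paper's route.

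The argument for $\mathbf{M}(C(\not\leq\aleph_{0}),\geq 2^{\aleph_{0}})$, however, has a genuine gap. First, your dichotomy ``finite or $\geq 2^{\aleph_{0}}$'' for orbit sizes is wrong. If the stabilizer of $x$ in $\mathbf{G}=\fix_{\mathcal{G}}(A\setminus A_s)\cong\Aut(\mathbf{2}^{\omega})$ has index $<2^{\aleph_{0}}$, then by Theorem~\ref{thm:prel3} it lies between $\fix_{\mathbf{G}}(F)$ and $\fix_{\mathbf{G}}(\{F\})$ for a finite clopen partition $F$; when $|F|\geq 2$ both of these have \emph{denumerable} index (since $\Aut(\mathbf{A}_s)$ acts transitively on the countable set of ordered $|F|$-element clopen partitions), so the orbit is countably infinite, not finite. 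Your ``all-finite-orbit'' case is therefore not a case of the true dichotomy, and your appeal to $\mathbf{WOAC}_{fin}$ does not apply. Second, and more fundamentally, you never invoke the compact metric structure of $\mathbf{X}$, and without it the desired conclusion simply fails in $\mathcal{N}_{\mathbf{2}^{\omega}}$: take $X=\bigcup_{n\in\omega}\mathcal{C}^{\ast}_n$. This set is uncountable in $\mathcal{N}_{\mathbf{2}^{\omega}}$ (any enumeration would have a support $\bigcup_{n\in S}A_n$, forcing every value in $\mathcal{C}^{\ast}_m$ with $m\notin S$ to be $\Aut(\mathbf{A}_m)$-invariant, which is impossible), yet every well-orderable subset of $X$ is contained in a finite union $\bigcup_{n\in S}\mathcal{C}^{\ast}_n$ and is therefore countable; hence $\mathbb{R}$ does not inject into $X$. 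So no purely orbit-counting argument can work.

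The paper's proof hinges on the metric. One first isolates an infinite well-orderable orbit $Y\subseteq X$ under a single-coordinate group $\mathbf{G}$; compactness of $\mathbf{X}$ then forces $\ran(d\upharpoonright Y\times Y)$ to be infinite (otherwise $Y$ would be closed and discrete). Assuming $|Y|<2^{\aleph_{0}}$, Theorem~\ref{thm:prel3} yields a clopen partition $F$, and one transports $d$ to a pseudometric $\rho$ on the countable set of ordered $|F|$-element clopen partitions of $A_s$. The decisive observation is that the $\fix_{\mathcal{G}}(E)$-orbits of \emph{pairs} of such ordered partitions form a finite set (any pair can be moved into a fixed finite clopen algebra), and since $\rho$ is constant on each such orbit, $\ran(\rho)=\ran(d\upharpoonright Y\times Y)$ would be finite, a contradiction. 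This interplay between the metric and the finiteness of pair-orbit types is precisely what your outline is missing.
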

 
\begin{proof}

For each $n\in\omega$, $\tau_{n}\in\mathcal{N}_{\mathbf{2}^{\omega}}$ since $\sym_{\mathcal{G}}(\tau_{n})=\mathcal{G}\in\mathcal{F}$. Furthermore, $\langle A_{n},\tau_{n}\rangle$ is compact and metrizable in $\mathcal{N}_{\mathbf{2}^{\omega}}$ since it is homeomorphic (in $\mathcal{N}_{\mathbf{2}^{\omega}}$) to the compact, metrizable space $\mathbf{2}^{\omega}$. Arguing similarly to Sections \ref{s3.2} and \ref{s3.3}, one can show that the family $\mathcal{A}$ is  denumerable in $\mathcal{N}_{\mathbf{2}^{\omega}}$ but $\mathcal{A}$  has no partial multiple choice function in $\mathcal{N}_{\mathbf{2}^{\omega}}$. Thus, $\mathbf{CAC}(C,\mathbf{M}_{le})$ is false in $\mathcal{N}_{\mathbf{2}^{\omega}}$. 

For each $n\in\omega$, let $\mathbf{X}_{n}=\mathbf{A}_{n}\bigoplus\{n\}$; note that $\omega\cap A=\emptyset$ since $\omega$ is in the kernel of $\mathcal{N}_{\mathbf{2}^{\omega}}$, i.e., $\omega$ is a pure set in $\mathcal{N}_{\mathbf{2}^{\omega}}$. As $\mathcal{A}$ has no choice function in $\mathcal{N}_{\mathbf{2}^{\omega}}$, the product space $\prod_{n\in\omega}\mathbf{X}_{n}$ is neither separable nor compact in $\mathcal{N}_{\mathbf{2}^{\omega}}$. Hence, $\mathbf{CPM}_{le}(C,S)$ and $\mathbf{CPM}_{le}(C,C)$ are also false in $\mathcal{N}_{\mathbf{2}^{\omega}}$.
  
\begin{claim}
\label{c:IDI_in_M}
$\mathcal{N}_{\mathbf{2}^{\omega}}\models\mathbf{IDI}$.
\end{claim}
\begin{proof}
\textbf{A.} Let $X\in \mathcal{N}_{\mathbf{2}^{\omega}}$, be an infinite, non-well-orderable set in $\mathcal{N}_{\mathbf{2}^{\omega}}$. Let $E=\bigcup\{A_{n}:n\in S\}$, where $S\subsetneq\omega$ is finite, be a support of $X$. Then there exists $x\in X$ such that $E$ is not a support of $x$. Let $E\cup E'$ be a support of $x$, where $E'=\bigcup\{A_{n}:n\in S'\}$ for some finite $S'\subsetneq\omega$ such that $S\cap S'=\emptyset$. As in the proof of Theorem \ref{s3cl4}, there exist $s\in S'$ and $\eta\in\fix_{\mathcal{G}}(A\setminus A_{s})$ such that $\eta(x)\ne x$. We note that $\fix_{\mathcal{G}}(A\setminus A_{s})$ is isomorphic to $\Aut(\mathbf{A}_{s})$, and thus isomorphic to $\Aut(\mathbf{2}^{\omega})$.

Let
$$Y = \{ \sigma(x) : \sigma \in \fix_{\mathcal{G}}(A\setminus A_{s}) \}.$$
Since $x\in X$, $\fix_{\mathcal{G}}(A\setminus A_{s})\subseteq\fix_{\mathcal{G}}(E)$ and $E$ is a support of $X$, we infer that $Y\subseteq X$. Note that $|Y|\ge 2$ since $x,\eta(x)\in Y$ and $\eta(x)\ne x$.

Furthermore, $Y$ is well-orderable in $\mathcal{N}_{\mathbf{2}^{\omega}}$. Indeed, $E\cup E'$ is a support of every element of $Y$. To see this, let $\rho\in\fix_{\mathcal{G}}(E\cup E')$ and $\sigma\in \fix_{\mathcal{G}}(A\setminus A_{s})$. By the definition of $Y$, it suffices to show that $\rho\sigma$ and $\sigma$ agree on the support $E\cup E'$ of $x$. Let $a\in E\cup E'$. If $a\notin A_{s}$, then clearly $\rho\sigma(a)=a=\sigma(a)$. If $a\in A_{s}$, then $\sigma(a)\in A_{s}\subseteq E\cup E'$ (because, if $n\in\omega$, then every element of $\mathcal{G}$ fixes $A_{n}$), and since $\rho\in\fix_{\mathcal{G}}(E\cup E')$, we have $\rho\sigma(a)=\rho(\sigma(a))=\sigma(a)$. Therefore, $\rho\sigma$ and $\sigma$ agree on  $E\cup E'$, and hence $\rho(\sigma(x))=\rho\sigma(x)=\sigma(x)$. Thus, $Y$ is well-orderable by Fact \ref{f:2}.

To complete the proof, it suffices to show that $Y$ is infinite. By way of contradiction, we assume $Y$ is finite. Then the group $\Sym(Y)$ is also finite.

We define a map $\phi:\fix_{\mathcal{G}}(A\setminus A_{s})\rightarrow\Sym(Y)$ by: 
$$\phi(\pi)(y)=\pi(y)\text{ for all $\pi\in\fix_{\mathcal{G}}(A\setminus A_{s})$ and $y\in Y$.}$$ 
Clearly, $\phi$ is a homomorphism, and hence, by the First Isomorphism Theorem of algebra, $\ker(\phi)$ is a normal subgroup of $\fix_{\mathcal{G}}(A\setminus A_{s})$, and the quotient group $\fix_{\mathcal{G}}(A\setminus A_{s})/\ker(\phi)$ embeds into $\Sym(Y)$. As $\eta\in \fix_{\mathcal{G}}(A\setminus A_{s})\setminus\ker(\phi)$, we have that $\ker(\phi)$ is a proper subgroup of $\fix_{\mathcal{G}}(A\setminus A_{s})$. Since $\fix_{\mathcal{G}}(A\setminus A_{s})$ is isomorphic to $\Aut(\mathbf{2}^{\omega})$, it is a simple group by Theorem \ref{thm:prel2}. We conclude that the group $\ker(\phi)$ is trivial.  Therefore, $\fix_{\mathcal{G}}(A\setminus A_{s})/\ker(\phi)$ is isomorphic to $\fix_{\mathcal{G}}(A\setminus A_{s})$. This implies that $\Sym(Y)$ contains a copy of $\fix_{\mathcal{G}}(A\setminus A_{s})$. But this is impossible because $\Sym(Y)$ is finite and $\fix_{\mathcal{G}}(A\setminus A_{s})$ is infinite. The contradiction obtained shows that $Y$ is infinite, as required.
\vskip.1in

\textbf{B.} Let $X\in\mathcal{N}_{\mathbf{2}^{\omega}}$, $E=\bigcup\{A_{n}: n\in S\}$ for some $S\in [\omega]^{<\omega}$, $x\in X$, $s\in\omega\setminus S$, $\eta\in\fix_{\mathcal{G}}(A\setminus A_{s})$ with $\eta(x)\ne x$, and 
$$Y = \{ \sigma(x) : \sigma \in \fix_{\mathcal{G}}(A\setminus A_{s})\}$$ 
be as in proof \textbf{A}. In view of the argument in the third paragraph of the proof of Theorem \ref{s3cl4},  without loss of generality, we may consider $E\cup A_{s}$ as a support of $x$,\footnote{If $E'=\bigcup\{A_{n}:n\in S'\}$, for some finite $S'\subsetneq\omega$ with $S\subseteq S'$ and $|S'\setminus S|\geq 2$, is a support of $x$, and $s\in S'\setminus S$ and $\eta$ are as above, then we may henceforth replace every occurrence of $E$ by $E'\setminus A_{s}$ and the argument goes through without any other alterations; note that $E'\setminus A_s$ is a support of $X$ since it contains the support $E$ of $X$.} and thus, as a support of $\phi(x)$ for all $\phi\in \mathcal{G}$ since, if $E\cup A_{s}$ is a support of $x$, then, for every $\phi\in \mathcal{G}$, 
$$\phi(E\cup A_{s})=\phi((\bigcup\{A_{n}:n\in S\})\cup A_{s})=(\bigcup\{A_{n}:n\in S\})\cup A_{s}=E\cup A_{s}$$ 
is a support of $\phi(x)$.   

As in proof \textbf{A}, $Y$ is a subset of $X$ such that $Y$ is well-orderable in $\mathcal{N}_{\mathbf{2}^{\omega}}$  (for $E\cup A_{s}$ is a support of every element of $Y$). For the sake of simplicity in notation, let us denote the subgroup $\fix_{\mathcal{G}}(A\setminus A_{s})$ of $\mathcal{G}$ by $\mathbf{G}$. So, under this notation, $Y=\{\sigma(x) : \sigma \in \mathbf{G}\}$. We let
$$H=\{\sigma\in\mathbf{G}:\sigma(x)=x\}.$$
Then $H$ is a proper subgroup of $\mathbf{G}$ because $\eta\in\mathbf{G}\setminus H$. We now prove by contradiction that $Y$ is infinite. So, assume $Y$ is finite. Then the index of $H$ in $\mathbf{G}$ is finite (since $|\mathbf{G}/H|=|Y|$), and thus less than $2^{\aleph_{0}}$.

As $\mathbf{G}$ is isomorphic to $\Aut(\mathbf{2}^{\omega})$, it follows from Theorem \ref{thm:prel3} that there is a finite partition $F$ of $A_{s}$ into clopen sets of $\mathbf{A}_s$, such that: 
$$\fix_{\mathbf{G}}(F){\leq H}{\leq\fix_{\mathbf{G}}(\{F\}}).$$ 
By the first of the above two inequalities and the definition of $H$, we obtain the following: 
\begin{equation}
\label{eq:F_supports_x}
(\forall\phi\in\fix_{\mathbf{G}}(F))(\phi(x)=x).
\end{equation} 
Note that $F$ has at least two members; otherwise, $F=\{A_{s}\}$ which, together with (\ref{eq:F_supports_x}), yields $H=\mathbf{G}$, contradicting the fact that $H$ is a proper subgroup of $\mathbf{G}$. Suppose 
$$F=\{C_1,C_2,\ldots,C_{n}\}$$
for an integer $n\geq 2$ such that the map $\{1,2, \dots, n\}\ni i\mapsto C_{i}$ is a bijection onto $F$. We define a binary relation $f$ by:
$$f=\{\langle\phi(C_1,C_2,\ldots,C_n),\phi(x)\rangle:\phi\in\mathbf{G}\}.$$
Then $f$ has the following properties:
\begin{enumerate}
\item $f\in \mathcal{N}_{\mathbf{2}^{\omega}}$. To show this, we observe that $$f=\{\langle\phi(C_1,C_2,\ldots,C_n),\phi(x)\rangle:\phi\in\fix_{\mathcal{G}}(E)\}.$$ 
Indeed, since $\mathbf{G}\subseteq\fix_{\mathcal{G}}(E)$, $f\subseteq\{\langle\phi(C_1,C_2,\ldots,C_n),\phi(x)\rangle:\phi\in\fix_{\mathcal{G}}(E)\}$. Conversely,
for a given $\phi\in\fix_{\mathcal{G}}(E)$, let $\sigma\in\mathbf{G}$ be such that $\sigma$ agrees with $\phi$ on $A_{s}$, and thus, also on the support $E\cup A_{s}$ of $x$, as well as on the family $\mathcal{R}_{s}$ of all clopen subsets of $A_s$. It follows that $\langle\phi(C_1,C_2,\ldots,C_n),\phi(x)\rangle=\langle\sigma(C_1,C_2,\ldots,C_n),\sigma(x)\rangle\in f$. Hence, the above equality holds; thus, $E$ is a support of $f$, so $f\in\mathcal{N}_{\mathbf{2}^{\omega}}$.\footnote{Note that $f$ is well-orderable in $\mathcal{N}_{\mathbf{2}^{\omega}}$ since $\fix_{\mathcal{G}}(E\cup A_{s})\subseteq\fix_{\mathcal{G}}(f)$.}  
 
\item $f$ is a function. Indeed, let $\phi,\psi\in\mathbf{G}$ be such that $\phi(C_1,C_2,\ldots,C_n)=\psi(C_1,C_2,\ldots,C_n)$. Then, for every $i\in\{1,\dots, n\}$,  $\phi(C_i)=\psi(C_i)$, so $\psi^{-1}\phi\in\fix_{\mathbf{G}}(F)$. By (\ref{eq:F_supports_x}), we obtain that $\psi^{-1}\phi(x)=x$, or equivalently $\phi(x)=\psi(x)$. Thus, $f$ is a function.

\item $\dom(f)$ is denumerable. Indeed, since $\mathbf{G}$ is isomorphic to $\Aut(\mathbf{A}_{s})$, it follows (by the definition of $f$) that $\dom(f)$ is an infinite subset of the set $\mathcal{D}$ of all finite sequences of elements of $\mathcal{R}_{s}$, each having length $n$. As $|\mathcal{R}_{s}|=\aleph_{0}$, it follows that $|\mathcal{D}|=\aleph_{0}$, and hence $|\dom(f)|=\aleph_{0}$.

\item $\ran(f)=\{\phi(x):\phi\in\mathbf{G}\}=Y$.

\item $f$ is finite-to-one. Indeed, let $w\in\ran(f)$. Then $w=\phi(x)$ for some $\phi\in\mathbf{G}$. Let $$Z_{\phi}=\Sym(\phi(F))=\Sym(\{\phi(C_1),\phi(C_{2}),\ldots,\phi(C_{n})\}).$$ 
Clearly, $Z_{\phi}$ is finite; in particular, as $|\phi(F)|=|F|=n$, $|Z_{\phi}|=n!$. We denote the elements of $Z_{\phi}$ as $n$-tuples $\langle\phi(C_{\pi(1)}),\phi(C_{\pi(2)}),\ldots,\phi(C_{\pi(n)})\rangle$ with $\pi\in\Sym(\{1,2,\ldots,n\})$.

We prove that $f^{-1}(\{\phi(x)\})\subseteq Z_{\phi}$, i.e., $f^{-1}(\{w\})\subseteq Z_{\phi}$. Let $\psi\in\mathbf{G}$ be such that $\psi(C_1,C_2,\ldots,C_n)\in f^{-1}(\{\phi(x)\})$. Then we have $\psi(x)=f(\psi(C_1,C_2,\ldots,C_n))=\phi(x)$; thus 
$\psi(x)=\phi(x)$. The latter equality yields $\psi^{-1}\phi\in H$. Hence, since $H\leq\fix_{\mathbf{G}}(\{F\})$, $\psi^{-1}\phi(F)=F$, i.e., $\psi^{-1}\phi$ fixes $F$ setwise. This, together with the fact that $\psi^{-1}\phi$ is one-to-one, means that 
$$\psi^{-1}\phi(C_1,C_2,\ldots,C_n)=\langle C_{\sigma(1)},C_{\sigma(2)},\ldots,C_{\sigma(n)}\rangle$$
for some $\sigma\in\Sym(\{1,2,\ldots,n\})$. It follows that
$$\langle \phi(C_1),\phi(C_2),\ldots,\phi(C_n)\rangle=\langle \psi(C_{\sigma(1)}),\psi(C_{\sigma(2)}),\ldots,\psi(C_{\sigma(n)})\rangle,$$
and, in consequence, by the latter equation, we have:
\begin{align*}
\psi(C_1,C_2,\ldots,C_n)&=\langle \psi(C_{1}),\psi(C_{2}),\ldots,\psi(C_{n})\rangle\\
&=\langle \phi(C_{\sigma^{-1}(1)}),\phi(C_{\sigma^{-1}(2)}),\ldots,\phi(C_{\sigma^{-1}(n)})\rangle\in Z_{\phi}.
\end{align*}
Hence $f^{-1}(\{\phi(x)\})\subseteq Z_{\phi}$, so  $f^{-1}(\{w\})\subseteq Z_{\phi}$. Since $w$ is an arbitrary element of $\ran(f)$, we conclude that $f$ is finite-to-one, as required.
\end{enumerate}

By item 4 and our assumption on $Y$, we deduce that $\ran(f)$ is finite, and since (by item 5) $f$ is finite-to-one, $\dom(f)$ is also finite. But this contradicts item 3. The contradiction obtained proves that $Y$ is infinite.
\end{proof}

\begin{claim}
\label{c:M_features}
If $x\in\mathcal{N}_{\mathbf{2}^{\omega}}$, then the $\mathcal{G}$-orbit of $x$, that is, the set $\Orb_{\mathcal{G}}(x)=\{\phi(x):\phi\in \mathcal{G}\}$, is of size at most $2^{\aleph_{0}}$. In particular, every set in $\mathcal{N}_{\mathbf{2}^{\omega}}$ has a well-orderable partition into well-orderable sets, each of size at most $2^{\aleph_{0}}$.
\end{claim}
\begin{proof}
Fix $x\in\mathcal{N}_{\mathbf{2}^{\omega}}$. Let $S\in [\omega]^{<\omega}\setminus\{\emptyset\}$ be such that  $E=\bigcup\{A_{n}:n\in S\}$ is a support of $x$. We assert that 
\begin{equation}
\label{eq:Orb}
\Orb_{\mathcal{G}}(x)=\Orb_{\fix_{\mathcal{G}}(A\setminus E)}(x)=\{\phi(x):\phi\in\fix_{\mathcal{G}}(A\setminus E)\}.
\end{equation}
Clearly, $\Orb_{\fix_{\mathcal{G}}(A\setminus E)}(x)\subseteq\Orb_{\mathcal{G}}(x)$. Conversely, let $\phi\in \mathcal{G}$. Let $\eta$ be the permutation of $A$ which agrees with $\phi$ on $E$ and is the identity on $A\setminus E$. Since $\phi$ and $\eta$ agree on $E$, it follows that $\eta^{-1}\phi\in\fix_{\mathcal{G}}(E)$, and since $E$ is a support of $x$, $\eta^{-1}\phi(x)=x$ or, equivalently, $\phi(x)=\eta(x)$. As $\eta(x)\in\Orb_{\fix_{\mathcal{G}}(A\setminus E)}(x)$, it follows that $\phi(x)\in\Orb_{\fix_{\mathcal{G}}(A\setminus E)}(x)$. Hence $\Orb_{\mathcal{G}}(x)\subseteq \Orb_{\fix_{\mathcal{G}}(A\setminus E)}(x)$ and, therefore, $\Orb_{\mathcal{G}}(x)=\Orb_{\fix_{\mathcal{G}}(A\setminus E)}(x)$, as asserted.

Let us notice that the group $\fix_{\mathcal{G}}(A\setminus E)$ is isomorphic to the group $\prod\limits_{n\in S}\Aut(\mathbf{A}_{n})$. For every $n\in\omega$, it holds in $\mathcal{N}_{\mathbf{2}^{\omega}}$ that $\mathbf{A}_n$ is homeomorphic to the Cantor cube $\mathbf{2}^{\omega}$, which implies that $|\Aut(\mathbf{A}_{n})|=2^{\aleph_{0}}$ in $\mathcal{N}_{\mathbf{2}^{\omega}}$. Therefore, since $S$ is a non-empty finite set, we have the following equalities in $\mathcal{N}_{\mathbf{2}^{\omega}}$:  $$|\fix_{\mathcal{G}}(A\setminus E)|=|\prod\limits_{n\in S}\Aut(\mathbf{A}_{n})|=2^{\aleph_{0}}.$$
Since the map $\fix_{\mathcal{G}}(A\setminus E)\ni\phi\mapsto\phi(x)$ is a surjection of $\fix_{\mathcal{G}}(A\setminus E)$ onto $\Orb_{\fix_{\mathcal{G}}(A\setminus E)}(x)$, we conclude that, in $\mathcal{N}_{\mathbf{2}^{\omega}}$, $|\Orb_{\fix_{\mathcal{G}}(A\setminus E)}(x)|\leq 2^{\aleph_{0}}$, and thus, by (\ref{eq:Orb}), $|\Orb_{\mathcal{G}}(x)|\leq 2^{\aleph_{0}}$.

For the second assertion of the claim, let us notice that
$$x=\bigcup\{\Orb_{\fix_{\mathcal{G}}(E)}(y):y\in x\}.$$
The family $\mathcal{O}=\{\Orb_{\fix_{\mathcal{G}}(E)}(y):y\in x\}$ is a partition of $x$ and it is well-orderable in $\mathcal{N}_{\mathbf{2}^{\omega}}$ because $E$ is a support of every member of $\mathcal{O}$. Furthermore, by the first part of the proof, every member of $\mathcal{O}$ is of size at most $2^{\aleph_{0}}$. Since $\mathbb{R}$ is well-orderable in every Fraenkel-Mostowski model (for $\mathbb{R}$ is a pure set), it follows that every member of $\mathcal{O}$ is well-orderable in $\mathcal{N}_{\mathbf{2}^{\omega}}$.

The above arguments complete the proof of the claim.
\end{proof}

\begin{claim}
\label{c:WOACfin_in_M}
$\mathcal{N}_{\mathbf{2}^{\omega}}\models\mathbf{WOAC}_{fin}$.
\end{claim}
\begin{proof}
The proof is fairly similar to the one of Theorem \ref{s3cl4}, using this time the part of the argument of either \textbf{A} and \textbf{B} of the proof of Claim \ref{c:IDI_in_M}, which establishes that the suitable, for the current proof, set $Y=\{ \sigma(x) : \sigma \in \fix_{\mathcal{G}}(A\setminus A_{s})\}$ is infinite. We thus take the liberty to leave the details to the interested readers.  
\end{proof}

\begin{claim}
\label{c:CUC_false_in_M}
$\mathcal{N}_{\mathbf{2}^{\omega}}\models\neg\mathbf{CMC}_{\omega}$.
\end{claim}
\begin{proof}
For each $n\in\omega$, we let $\mathcal{C}_{n}$ be the set of all clopen sets in $\mathbf{A}_n=\langle A_{n},\tau_{n}\rangle$, which are in $\mathcal{N}_{\mathbf{2}^{\omega}}$ and are neither $\emptyset$ nor $A_n$. Since $\tau_{n}=\tau_{n}^{\mathcal{M}}$, it follows that $\mathcal{C}_{n}=\{K\in \mathcal{M}: K \text{ is clopen in }\langle A_{n},\tau_{n}\rangle\text{ and }\emptyset\neq K\neq A_n\}=\mathcal{R}_n\setminus\{\emptyset, A_n\}$ (see proof $\mathbf{B}$ of Claim \ref{c:IDI_in_M}). Since, for every $n\in\omega$,  $\mathcal{C}_{n}$ is denumerable in $\mathcal{M}$ and $A_{n}$ is a support of every element of $\mathcal{C}_{n}$, we conclude that $\mathcal{C}_{n}$ is denumerable in $\mathcal{N}_{\mathbf{2}^{\omega}}$. 

We let
$$\mathcal{C}=\{\mathcal{C}_{n}:n\in\omega\}.$$
Then $\mathcal{C}$ is denumerable in $\mathcal{N}_{\mathbf{2}^{\omega}}$ since, for every $\phi\in G$ and every $n\in\omega$, $\phi(\mathcal{C}_{n})=\mathcal{C}_{n}$. (Recall that, for every $\phi\in G$ and every $n\in\omega$, $\phi\upharpoonright A_{n}\in\Aut(\mathbf{A}_{n})$.)

We prove that $\mathcal{C}$ has no partial multiple choice function in $\mathcal{N}_{\mathbf{2}^{\omega}}$, which will yield that $\mathbf{CMC}_{\omega}$ is false in $\mathcal{N}_{\mathbf{2}^{\omega}}$. By way of contradiction, we assume that, in $\mathcal{N}_{\mathbf{2}^{\omega}}$,  $\mathcal{C}$ has an infinite subfamily $\mathcal{B}$ such that some $f\in\mathcal{N}_{\mathbf{2}^{\omega}}$ is a multiple choice function of $\mathcal{B}$. Let $E=\bigcup\{A_{n}:n\in S\}$, for some finite $S\subsetneq\omega$, be a support of $f$. Since $\mathcal{B}$ is infinite and $S$ is finite, there exists $m\in\omega$ such that $\mathcal{C}_{m}\in\mathcal{B}$ and $A_{m}\cap E=\emptyset$. As $\mathcal{C}_{m}\in\mathcal{B}$ and $f$ is a multiple choice function for $\mathcal{B}$, $f(\mathcal{C}_{m})$ is a non-empty, finite subset of $\mathcal{C}_{m}$. Hence, since $\mathcal{C}_m$ is infinite, we can fix $W\in\mathcal{C}_m\setminus f(\mathcal{C}_m)$. Since $\emptyset\neq f(\mathcal{C}_m)$, we can also fix $C\in f(\mathcal{C}_m)$. 

By Theorem \ref{thm:prel1}, there exists $\phi\in\Aut(\mathbf{A}_{m})$ such that $\phi(C)=W$. Let $\psi$ be the permutation of $A$ which agrees with $\phi$ on $A_{m}$ and is the identity on $A\setminus A_{m}$. Since $E\cap A_m=\emptyset$, it follows that  $\psi\in\fix_{\mathcal{G}}(E)$. Then $\psi(f)=f$ because $E$ is a support of $f$. Furthermore, since $C\in f(\mathcal{C}_{m})$, we have  $W=\phi(C)=\psi(C)\in\psi(f(\mathcal{C}_{m}))$. Therefore, $\psi(f(\mathcal{C}_{m}))\neq f(\mathcal{C}_{m})$ because $W\in\psi(f(\mathcal{C}_{m}))\setminus f(\mathcal{C}_{m})$. On the other hand, since $f$ is a function and the following implications are true:
\begin{multline*}
\langle\mathcal{C}_{m},f(\mathcal{C}_{m})\rangle\in f\rightarrow\psi(\mathcal{C}_{m},f(\mathcal{C}_{m}))\in\psi(f)\rightarrow\langle\psi(\mathcal{C}_{m}),\psi(f(\mathcal{C}_{m}))\rangle\in f\\
\rightarrow\langle\mathcal{C}_{m},\psi(f(\mathcal{C}_{m}))\rangle\in f,
\end{multline*}
we deduce that $f(\mathcal{C}_{m})=\psi(f(\mathcal{C}_{m}))$, which is a contradiction. Thus, $\mathcal{C}$ has no partial multiple choice function in $\mathcal{N}_{\mathbf{2}^{\omega}}$, as required. 
\end{proof}

\begin{claim}
\label{c:MCWO_in_M}
$\mathcal{N}_{\mathbf{2}^{\omega}}\models \mathbf{M}(C(\not\leq\aleph_{0}),\geq 2^{\aleph_{0}})$.
\end{claim}
\begin{proof}
Let $\mathbf{X}=\langle X,d\rangle$ be an uncountable compact metric space in $\mathcal{N}_{\mathbf{2}^{\omega}}$. If $X$ is well-orderable in $\mathcal{N}_{\mathbf{2}^{\omega}}$, and thus (by \cite[Theorem 2.1]{kt}) separable in $\mathcal{N}_{\mathbf{2}^{\omega}}$, then $\mathcal{N}_{\mathbf{2}^{\omega}}\models|X|=2^{\aleph_{0}}$ and we are done. So, assume $X$ is not well-orderable in $\mathcal{N}_{\mathbf{2}^{\omega}}$.  
Let, for some finite $S\subsetneq\omega$, the set $E=\bigcup\{A_{s}:s\in S\}$ be a support of $\mathbf{X}$.   

As in proof \textbf{B} of Claim \ref{c:IDI_in_M}, let $x$ be an element of $X$ which does not have $E$ as a support, $s\in\omega\setminus S$ be such that $E\cup A_{s}$ is (without loss of generality) a support of $x$, $\eta\in\mathbf{G}$ (where $\mathbf{G}=\fix_{\mathcal{G}}(A\setminus A_{s})$) be such that $\eta(x)\ne x$, and let 
$$Y=\{\sigma(x):\sigma\in\mathbf{G}\}=\Orb_{\fix_{\mathcal{G}}(E)}(x)$$ 
(where the second of the above equalities follows from the fact that $E\cup A_{s}$ is a support of $x$--see the argument in item 1 of proof \textbf{B}) be the infinite (by proof \textbf{B}) subset of $X$ which is well-orderable in $\mathcal{N}_{\mathbf{2}^{\omega}}$ (recall $\fix_{\mathcal{G}}(E\cup A_{s})\subseteq\fix_{\mathcal{G}}(Y)$). 
By Claim \ref{c:M_features}, we have $|Y|=|\Orb_{\fix_{\mathcal{G}}(E)}(x)|\leq 2^{\aleph_{0}}$. Furthermore, as $\mathbf{X}$ is compact, 
$$\ran(d\upharpoonright Y\times Y)\text{ is infinite.}$$
Otherwise, it is fairly easy to verify that the infinite set $Y$ would be a discrete, closed subset of $\mathbf{X}$, contradicting the compactness of $\mathbf{X}$.
 
We assert that $|Y|=2^{\aleph_{0}}$. If not, then $|Y|<2^{\aleph_{0}}$; thus, since for the (proper) subgroup $$H=\{\sigma\in\mathbf{G}:\sigma(x)=x\}$$ of $\mathbf{G}$, we have $|\mathbf{G}/H|=|Y|$, the index of $H$ in $\mathbf{G}$ is strictly less than $2^{\aleph_{0}}$.

Let, by Theorem \ref{thm:prel3}, $F=\{C_1,C_2,\ldots,C_n\}$ be a finite partition of $A_{s}$ into clopen sets of $\mathbf{A}_s$, such that $\fix_{\mathbf{G}}(F)\leq H\leq\fix_{\mathbf{G}}(\{F\})$ (we recall that, by proof \textbf{B}, $|F|\geq 2$), and also let $$f=\{\langle\phi(C_1,C_2,\ldots,C_n),\phi(x)\rangle:\phi\in\fix_{\mathcal{G}}(E)\}$$ be the function defined in proof \textbf{B} of Claim \ref{c:IDI_in_M}, where it has been shown that $E$ is a support of $f$, and $f$ is a finite-to-one function from the denumerable set $\Orb_{\fix_{\mathcal{G}}(E)}(\langle C_{1},C_{2},\ldots,C_{n}\rangle)$ onto $Y$; moreover, for all $\phi\in\fix_{\mathcal{G}}(E)$, we have $f^{-1}(\{\phi(x)\})\subseteq \Sym(\phi(F))$ (see items 1 and 5 of proof \textbf{B}). So,
\begin{multline}
\label{eq:dom(f)_1}
\dom(f)=\Orb_{\fix_{\mathcal{G}}(E)}(\langle C_{1},C_{2},\ldots,C_{n}\rangle)=\bigcup\{f^{-1}(\{\phi(x)\}):\phi\in\fix_{\mathcal{G}}(E)\}\\=\bigcup\Orb_{\fix_{\mathcal{G}}(E)}(f^{-1}(\{x\}))
\end{multline}
and $\Orb_{\fix_{\mathcal{G}}(E)}(f^{-1}(\{x\}))=\{f^{-1}(\{\phi(x)\}):\phi\in\fix_{\mathcal{G}}(E)\}$ is a denumerable, disjoint family of sets, each having cardinality at most $n!$.\footnote{We also note that $\dom(f)=\bigcup\Orb_{\fix_{\mathcal{G}}(E)}(\Sym(F))$. Indeed, by the first equality in (\ref{eq:dom(f)_1}), it is clear that $\dom(f)\subseteq\bigcup\Orb_{\fix_{\mathcal{G}}(E)}(\Sym(F))$. For the reverse inclusion, fix $y\in\bigcup\Orb_{\fix_{\mathcal{G}}(E)}(\Sym(F))$. There exist $\phi\in\fix_{\mathcal{G}}(E)$, $\sigma\in\Sym(\{1,\ldots,n\})$ such that $y=\phi(C_{\sigma(1)},\ldots,C_{\sigma(n)})$. Since $F$ comprises clopen sets in the Cantor space $\mathbf{A}_{s}$, any two distinct elements of $F$ are, by Theorem \ref{thm:prel1}, homeomorphic, and so, by \cite[Theorem 7.3 (The pasting lemma)]{m}, there exists $\psi\in\mathbf{G}\subseteq\fix_{\mathcal{G}}(E)$ such that $\psi(C_1,\ldots,C_n)=\langle C_{\sigma(1)},\ldots,C_{\sigma(n)}\rangle$. Hence, $y=\phi(C_{\sigma(1)},\ldots,C_{\sigma(n)})=\phi\psi(C_1,\ldots,C_n)\in\dom(f)$, so $\bigcup\Orb_{\fix_{\mathcal{G}}(E)}(\Sym(F))\subseteq\dom(f)$. Thus, as $\langle C_1,\ldots,C_n\rangle$ is an $n$-element, ordered partition of ${A}_{s}$ into clopen sets of $\mathbf{A}_s$, the above equation yields that $\dom(f)$ is the set of \emph{all} $n$-element, ordered partitions of ${A}_{s}$ into clopen sets of $\mathbf{A}_s$.}
  
Based on the definition of $f$, equation (\ref{eq:dom(f)_1}) and the above fact about $\Orb_{\fix_{\mathcal{G}}(E)}(f^{-1}(\{x\}))$, as well as on the fact that $d$ is a metric, we define a pseudometric $\rho$ on $\dom(f)$ by:
$$\rho(u,v)=
\begin{cases}
0, &\text{if $f(u)=f(v)$;}\\
d(f(u),f(v)), &\text{if $f(u)\ne f(v)$.}
\end{cases}$$
If $u,v\in\dom(f)$ belong to the same element of the orbit $\Orb_{\fix_{\mathcal{G}}(E)}(f^{-1}(\{x\}))$, then their $\rho$-distance is zero, and if they belong to distinct elements of $\Orb_{\fix_{\mathcal{G}}(E)}(f^{-1}(\{x\}))$, then their (positive) $\rho$-distance is $d(f(u),f(v))$. By the definition of $\rho$, the second (or the third) equality of (\ref{eq:dom(f)_1}), and the fact that $E$ is a support of both $f$ and $d$, it follows that $E$ is also a support of $\rho$, and thus $\rho\in \mathcal{N}_{\mathbf{2}^{\omega}}$. Furthermore, as $\ran(\rho)=\ran(d\upharpoonright Y\times Y)$ and the latter set is infinite, $\ran(\rho)$ is infinite. Note that, essentially, the metric space $\langle Y,d\upharpoonright Y\times Y\rangle=\langle \ran(f),d\upharpoonright \ran(f)\times \ran(f)\rangle$ is, in $\mathcal{N}_{\mathbf{2}^{\omega}}$, homeomorphic to the metric identification of $\langle\dom(f),\rho\rangle$.

Let
$$\mathcal{Z}=\{\Orb_{\fix_{\mathcal{G}}(E)}(\langle y,z\rangle):y,z\in \dom(f)\text{ and }f(y)\neq f(z)\}.$$
Clearly, 
\begin{equation}
\label{eq:Z}
\bigcup\mathcal{Z}=(\dom(f)\times\dom(f))\setminus\{\langle y,z\rangle:f(y)=f(z)\}
\end{equation}
and note that
\begin{equation}
\label{eq:equal_dist}
(\forall Z\in\mathcal{Z})(\forall \langle y_1,z_1\rangle,\langle y_2,z_2\rangle\in Z)( \rho(y_1,z_1)=\rho(y_2,z_2)).
\end{equation}
Indeed, let $Z\in\mathcal{Z}$ and $\langle y_1,z_1\rangle,\langle y_2,z_2\rangle\in Z$. There exists $\phi\in\fix_{\mathcal{G}}(E)$ such that $\langle y_2,z_2\rangle=\phi(\langle y_1,z_1\rangle)$. Since $\phi\in\fix_{\mathcal{G}}(E)$ and $E$ is a support of $\rho$, it follows that $\rho(y_1,z_1)=\rho(y_2,z_2)$.

We show that $\mathcal{Z}$ is finite. This, together with (\ref{eq:Z}), (\ref{eq:equal_dist}) and the definition of $\rho$, will give us that $\ran(\rho)$ is finite, which is a contradiction. Let us first denote (for the sake of simplicity) the $n$-tuple $\langle C_1, C_2,\ldots,C_n\rangle$ by $\mathbf{C}$. 

Let $U$ be a partition of ${A}_s$ into clopen sets of $\mathbf{A}_s$ such that $|U|=n$ and, for every $T\in U$ and every  $i\in\{1,\ldots,n\}$, $T\cap C_{i}\neq\emptyset$. Then  $U\cap F=\emptyset$, and, for every $T\in U$, $T=\bigcup\limits_{i=1}^{n}(T\cap C_{i})$ because $F=\{C_1,C_2,\ldots,C_n\}$ is a partition of $A_{s}$. Let $U^{*}=\{T\cap C_{i}:T\in U$ and $1\leq i\leq n\}$ and $U^{**}=\{\bigcup\mathcal{Q}:\mathcal{Q}\subseteq U^{*}\}\setminus\{\emptyset\}$. Note that $F\cup U\cup U^{*}\subseteq U^{**}$, so $|U^{**}|>2n=2|F|$.

Let 
$$V=\{\langle\phi(\mathbf{C}),\psi(\mathbf{C})\rangle:\phi,\psi\in\fix_{\mathcal{G}}(E)\text{ and }\phi(F)\cup\psi(F)\subseteq U^{**}\}.$$  
Since $(U^{**})^{F}$ (the set of all functions from $F$ into $U^{**}$) is finite, it is clear that $V$ is finite. Hence, the set
$$O=\{\Orb_{\fix_{\mathcal{G}}(E)}(\langle u,v\rangle):\langle u,v\rangle\in V\}$$
is also finite. 

We assert that $\mathcal{Z}\subseteq O$. Let $Z\in\mathcal{Z}$. Then, $Z=\Orb_{\fix_{\mathcal{G}}(E)}(\langle y,z\rangle)$ for some $y,z\in\dom(f)$ with $f(y)\neq f(z)$. By the definition of $f$, there exist $\phi,\psi\in\fix_{\mathcal{G}}(E)$ such that $y=\phi(\mathbf{C})$ and $z=\psi(\mathbf{C})$. As $|U^{**}|>2|F|$, we have $|\phi(F)\cup\psi(F)|<|U^{**}|$, so using Theorem \ref{thm:prel1} and \cite[Theorem 7.3 (The pasting lemma)]{m}, we may construct a $\sigma\in\mathbf{G}\subseteq\fix_{\mathcal{G}}(E)$ such that $\sigma(\phi(F))\cup\sigma(\psi(F))\subseteq U^{**}$. Thus, $\langle\sigma(y),\sigma(z)\rangle=\langle\sigma\phi(\mathbf{C}),\sigma\psi(\mathbf{C})\rangle\in V$, so $Z=\Orb_{\fix_{\mathcal{G}}(E)}(\langle y,z\rangle)=\Orb_{\fix_{\mathcal{G}}(E)}(\langle \sigma (y),\sigma (z)\rangle)\in O$. Therefore, $\mathcal{Z}\subseteq O$, and as $O$ is finite, so is $\mathcal{Z}$.

By the above arguments, we conclude that $\ran(\rho)$ is finite, which is a contradiction. 

Thus, $|Y|=2^{\aleph_{0}}$ and, as $Y\subseteq X$, $2^{\aleph_{0}}\leq |X|$ in $\mathcal{N}_{\mathbf{2}^{\omega}}$. It follows that $\mathbf{M}(C(\not\leq\aleph_{0}),\geq 2^{\aleph_{0}})$ is true in $\mathcal{N}_{\mathbf{2}^{\omega}}$, as required.
\end{proof}   

The above claims complete the proof of the theorem. 
\end{proof}

\section{The shortlist of open problems}
\label{s5}

\begin{enumerate}
\item Is $\mathbf{M}(C,S)$ true in the model $\mathcal{N}_{\mathbf{2}^{\omega}}$? (Our conjecture is that the answer to this question is in the affirmative.)
\item Is there a $\mathbf{ZF}$-model for $\mathbf{\Phi}_2$?
\item Is there a $\mathbf{ZF}$-model for $\mathbf{\Phi}_4\wedge\mathbf{\Phi}_5$?
\item Are $\mathbf{M}(C(\not\leq\aleph_{0}),\geq 2^{\aleph_0})$ and $\mathbf{M}(C, S)$  equivalent in $\mathbf{ZF}$? (See Question \ref{s4q1}(i).)
\item Does $\mathbf{CAC}_{fin}$ imply $\mathbf{M}(C(\not\leq\aleph_{0}),\geq 2^{\aleph_0})$ in $\mathbf{ZF}$? (See Question \ref{s4q1}(ii).)
 
\item  Does $\mathbf{CPM}_{le}(C,C)$ imply $\mathbf{CPM}_{le}(C,M)$ in $\mathbf{ZF}$? (See Question \ref{s2q10} (ii).)
\item Does $\mathbf{CPM}_{le}(CS,MS)$ imply $\mathbf{M}(C,S)$ in $\mathbf{ZF}$? (See Question \ref{s2q10} (iii).)

\item Are  $\mathbf{M}(C,S)$, $\mathbf{CAC}(C, \mathbf{M}_{le})$, $\mathbf{CPM}_{le}(C,S)$, $\mathbf{CPM}_{le}(C,C)$ all equivalent in $\mathbf{ZF}$? (Cf. Question \ref{s2q14}.)
\end{enumerate}

\section{The diagram}
\label{s6}

To simplify the diagram illuminating the main implications deduced in this article, we denote by $\mathbf{\Psi}_1$ any form from the equivalent forms listed in Theorem \ref{s2t2}(i), and by $\mathbf{\Psi}_2$ any form from the equivalent forms listed in Theorem \ref{s2t2}(viii); that is:
\begin{multline*}
\mathbf{\Psi}_1\in\{ \mathbf{CPM}_{le}(C,2), 
\mathbf{CPM}_{le}(C,MS), \mathbf{CPM}_{le}(C,M2), \mathbf{CPM}_{le}(C, C2)\\
 \mathbf{CSM}_{le}(C,2), \mathbf{CSM}_{le}(C,MS), \mathbf{CSM}_{le}(C,M2) \},
 \end{multline*}
\begin{multline*}
 \mathbf{\Psi}_2\in\{ \mathbf{CPM}_{le}(CS,MS), \mathbf{CPM}_{le}(CS,2), \mathbf{CPM}%
_{le}(CS,MC),\\ \mathbf{CPM}_{le}(CS,M2), \mathbf{CPM}_{le}(C2,MS),
\mathbf{CPM}_{le}(C2,MC), \mathbf{CPM}_{le}(C2,M2),\\ \mathbf{CSM}%
_{le}(CS,MS), \mathbf{CSM}_{le}(CS,M2), \mathbf{CSM}_{le}(C2,MS), \mathbf{CSM}_{le}(C2,M2)\}.
 \end{multline*}
 
 \begin{center}
 {\small\textbf{Diagram.} Deductive strength of certain forms}
 \bigskip
\begin{tikzpicture}[font=\small]
\node (Psi1) at (-4,11) {$\mathbf{\Psi}_1$};
\node (CMCRCPMleCC) at(-2,12) {($\mathbf{CMC}(\leq 2^{\aleph_0})\wedge\mathbf{CPM}_{le}(C,C))$};
\node (Psi2) at (-5,5.3) {$\mathbf{\Psi}_2$};
\node (CSMleCS) at (1,6.3) {$\mathbf{CSM}_{le}(C,S)$};
\node (CPMleCS) at (-2,6.3) {$\mathbf{CPM}_{le}(C,S)$};
\node (CPMleCMC) at (1,10) {$\mathbf{CPM}_{le}(C, MC)$};
\node (CPMleCMCMCS) at (0,11) {($\mathbf{CPM}_{le}(C, MC)\wedge\mathbf{M}(C,S)$)};
\node (MCS) at (5.6,8) {$\mathbf{M}(C, S)$};
\node (Malephs) at (3,7) {$\mathbf{M}(C(\nleq\aleph_0),\geq2^{\aleph_0})$};
\node (CACfin) at (2.6,5.3) {$\mathbf{CAC}_{fin}$};
\node (CPMleCC) at (2.5,9.2) {$\mathbf{CPM}_{le}(C,C)$};
\node (CACCMle) at (5.5,9) {$\mathbf{CAC}(C,\mathbf{M}_{le})$};
\node (CPMleCSCACR) at (-2,8) {($\mathbf{CPM}_{le}(C, S)\wedge\mathbf{CAC}(\mathbb{R})$)};
\node (CPMleCSM) at (-3.5,2.2) {$\mathbf{CPM}_{le}(CS,M)$};
\node (CPMleCSMCUCfin) at (-3.3, 3.2) {($\mathbf{CPM}_{le}(CS, M)\wedge\mathbf{CUC}_{fin})$};
\node (CUC) at (-1.3,5.3) {$\mathbf{CUC}$};
\node (CPMleCSMCUC) at (-3,4.2) {($\mathbf{CPM}_{le}(CS, M)\wedge\mathbf{CUC}$)};
\node (UTalephscuf) at (1.3,2.2) {$\mathbf{UT}(\aleph_0, \aleph_0,cuf)$};
\node(CMComega) at (6,1.8) {$\mathbf{CMC}_{\omega}$};
\node (CAComega) at (0.5,5.3) {$\mathbf{CAC}_{\omega}$};

\draw [<->] (Psi1)--(CPMleCMCMCS);
\draw [->] (CPMleCMCMCS)--(CPMleCMC);
\draw [->] (CPMleCMC)-- (CPMleCC);
\draw [->] (CPMleCMCMCS)--(CPMleCMC);
\draw [<->] (CPMleCS)--(CSMleCS);
\draw [->] (CPMleCMC)--(CSMleCS);
\draw [->] (CMCRCPMleCC)--(Psi1);
\draw [->] (Psi1)--(Psi2);
\draw [->] (CPMleCSCACR)--(Psi1);
\draw [->] (CPMleCSCACR)--(CPMleCS);
\draw [->] (Psi2)--(CUC);
\draw [<->] (CPMleCSMCUCfin)--(CPMleCSMCUC);
\draw [->] (CPMleCSMCUCfin)--(CPMleCSM);
\draw [->] (CPMleCSM)--(UTalephscuf);
\draw [->] (CPMleCSMCUC)--(CUC);
\draw [->] (CUC)--(UTalephscuf);

\draw [->] (CPMleCC)--(CACCMle);
\draw [->] (CACCMle)--(MCS);
\draw [->] (MCS)--(Malephs);
\draw [->] (Malephs)--(CACfin);
\draw [->] (CAComega)--(CACfin);
\draw [->] (CUC)--(CAComega);

\draw [->] (UTalephscuf)--(CMComega);
\draw [->] (CAComega)--(CMComega);

\draw [->, dashed, postaction={decorate,decoration={
          raise=0.8ex,
          text along path,
          text format delimiters= {|}{|}, 
          text={|\tiny| (ZFA)},
          text align = center,
        }
      }] (Malephs) -- node{/} (CPMleCC);
      
 \draw [->, dashed, postaction={decorate,decoration={
          raise=0.8ex,
          text along path,
          text format delimiters= {|}{|}, 
          text={|\tiny| (ZFA)},
          text align = center,
        }
      }] (Malephs) -- node{/}(CACCMle);

 \draw [->, dashed, postaction={decorate,decoration={
          raise=0.8ex,
          text along path,
          text format delimiters= {|}{|}, 
          text={|\tiny| (ZFA)},
          text align = center,
        }
      }] (Malephs) -- node{/}(CMComega);      

\end{tikzpicture}
\vskip.1in

%{\small\textbf{Diagram.} Deductive strength of certain forms}
\end{center}

\end{document}